\DeclareMathOperator{\interior}{Int}
\newtheorem{theorem}{Theorem}[section]
\newtheorem{corollary}[theorem]{Corollary}
\newtheorem{definition}[theorem]{Definition}
\newtheorem{lemma}[theorem]{Lemma}
\newtheorem{proposition}[theorem]{Proposition}
\journal{ }
\begin{document}

\renewcommand{\listtablename}{\'indice de tablas}

\nocite{*}

\begin{frontmatter}



\title{Minimax Hausdorff estimation of density level sets}

\author[rvt]{A. Rodr\'iguez-Casal}
\author[rvt]{P. Saavedra-Nieves \corref{cor1}}
\address[rvt]{Department of Statistics, Mathematical Analysis and Optimization, University of Santiago de Compostela, Spain}
\cortext[cor1]{Corresponding author: paula.saavedra@usc.es (P. Saavedra-Nieves)}

\begin{abstract}
Given a random sample of points from some unknown density, we propose a data-driven method for estimating density level sets under the $r-$convexity assumption. This shape condition generalizes the convexity property. However, the main problem in practice is that $r$ is an unknown geometric characteristic of the set related to its curvature. A stochastic algorithm is proposed for selecting its optimal value from the data. The resulting reconstruction of the level set is able to achieve minimax rates for Hausdorff metric and distance in measure, up to log factors, uniformly on the level of the set.  
\end{abstract}

\begin{keyword}Nonparametric density level set estimation \sep $r-$convexity \sep smoothing parameter
\end{keyword}

\end{frontmatter}



\section{Introduction}\label{1}Given a random sample of points $\mathcal{X}_n$ from a density $f$, level set estimation theory deals with the problem of reconstructing, for a given level $t>0$, the unknown set  \begin{equation}\label{conjuntonivel1}G(t)=\{x\in\mathbb{R}^d:f(x)\geq t\}.\end{equation}

Many applications have emerged  in different scientific fields since the concept of population clusters was introduced. Clusters are defined as the connected components of density level sets, see \cite{har}, \cite{cuefra} or \cite{rin}. This definition depends on the user-specified level. For adressing this issue, an algorithm for estimating the smallest level at which there are more than one connected components in \cite{ste}. Furthermore, the notions of clustering and mode are intimately related, see \cite{cue}. An interesting application of this clustering approach to the astronomy was proposed in \cite{jan}. From a similar point of view, methods for visualizing multivariate density estimates were introduced in \cite{kle1} and \cite{kle2}. In addition, level set estimation was used in the context of the Hough transform, see \cite{gol}. In \cite{roe}, level sets were reconstructed for analyzing the difference between two probability densities in the field of flow cytometry. The detection of mine fields based on aerial observations, the analysis of seismic data, as well as certain issues in image segmentation involve level set estimation, see \cite{huo}. The detection of outliers is another key application, see \cite{gar} or \cite{mar} for a review. An outlier can be thought of as an observation that does not belong to the \textit{effective support} of the distribution. This one can be represented using a level set that looks like the support without including those areas that are almost empty from the probabilistic point of view. This scheme follows that of \cite{dev} to determine whether a manufacturing process is out of control. For more quality control approaches, see \cite{bai1} and \cite{bai2}.\vspace{-.8cm}	
\begin{figure}[h!]      		 	  \hspace{2cm}\includegraphics[scale=.26]{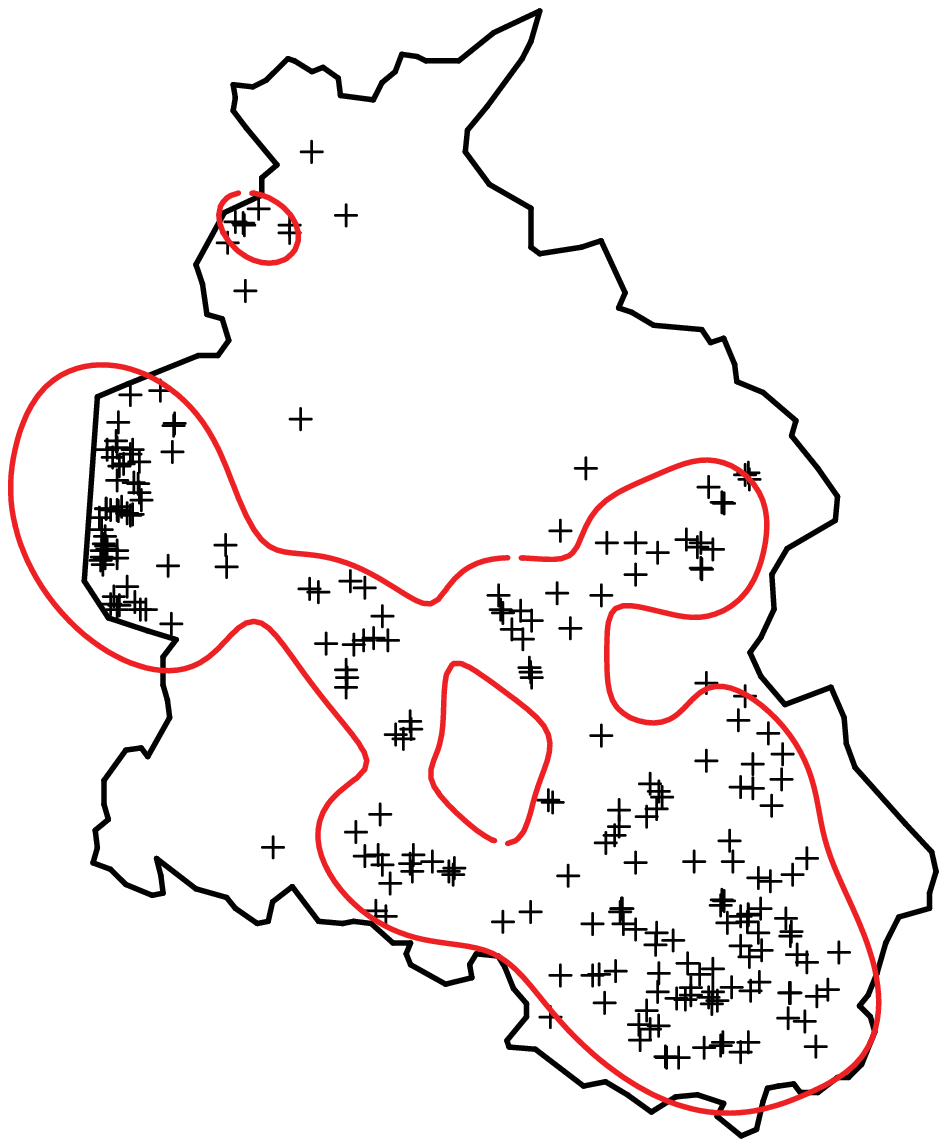}\hspace{-1cm}\includegraphics[scale=.26]{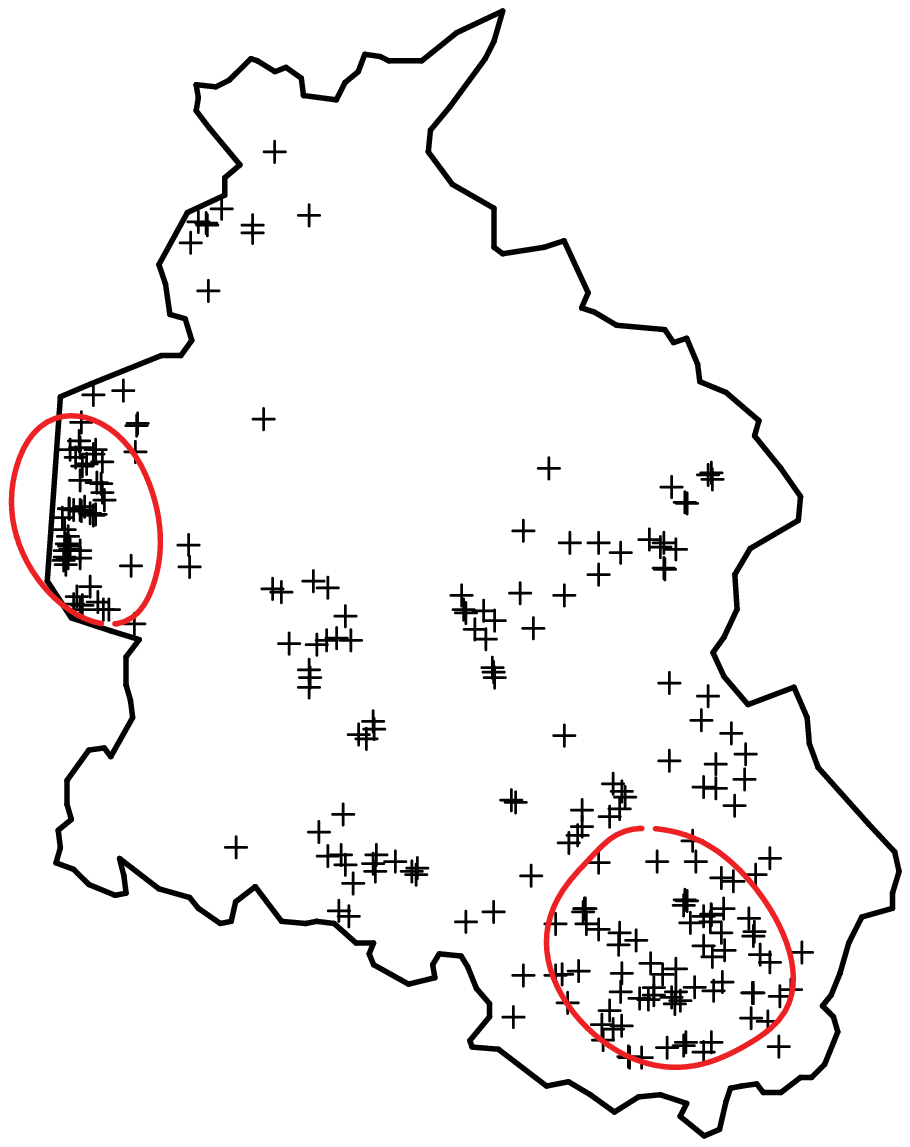}\hspace{-1cm}\includegraphics[scale=.26]{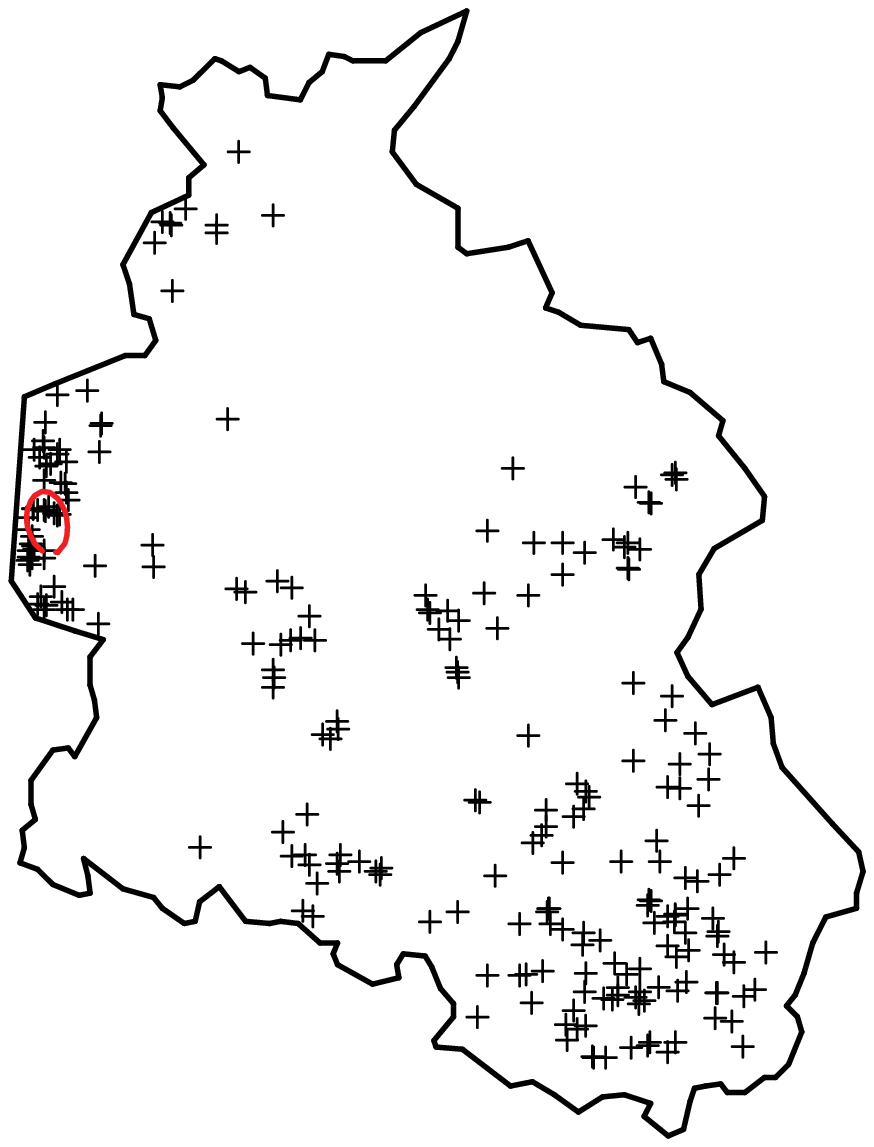} \vspace{-1.3cm}
	\caption{Level sets for the distribution of 322 cases diagnosed of leukaemia on the North West of England with $\tau=0.05$ (right), $\tau=0.5$ (center) and $\tau=0.95$ (left).}\vspace{-.2cm}
	\label{leucemia1}
\end{figure}      		

Just as quality control case, the areas of the distribution support
where $f$ is close to zero are usually of lesser interest for many practical
purposes since the probability of finding points there is extremely low. However, the value of level $t$ in (\ref{conjuntonivel1}) is unknown if the level set $G(t)$ must satisfy a fixed probability content. Therefore, an alternative definition for density level sets is considered next. Given $\tau\in (0,1)$, it is defined
\begin{equation}\label{conjuntonivel2}
	L(\tau)=\{x\in\mathbb{R}^d:f(x)\geq f_\tau\}
\end{equation}where
\begin{equation}\label{umbral}
	f_\tau=\sup
	\left\{y\in(0,\infty):\int_{\mathbb{R}^d}f(t)\mathbb{I}_{\{f(t)\geq
		y\}}\geq1-\tau\right\}.
\end{equation}
Therefore, $L(\tau)$ can be written as $G(f_\tau)$. According to equation (\ref{umbral}), $f_\tau$ denotes
the largest threshold such that the level set $L(\tau)$ has a probability greater than or equal to $1-\tau$ with
respect to the distribution induced by $f$. Therefore, for values of $\tau$ close to one, the level
set $L(\tau)$ represents the domain concentrated around the greatest mode. However, if $\tau$
is close to zero then it represents the \textit{substancial support} of the density $f$.

Figure \ref{leucemia1} contains the residential coordinates for 322 cases diagnosed of chronic granulocytic leukemia in the North West of England between 1982 up to 1998 (inclusive), see \cite{hen} and \cite{dig} for details on data set. In addition, contours of level sets have been represented for different values of $\tau$. They can be used for analyzing the existence of spatial clustering of leukaemia.

There exist three different nonparametric methodologies for estimating density levels sets in literature: plug-in, excess mass and hybrid methods. For a Bayesian point of view, see \cite{gay}. Next, they are detailed briefly:

The \emph{plug-in estimation} is the most natural choice to estimate $L(\tau)$ when no geometric information about the level set is available. It based on replacing $f$ by a nonparametric estimator for the density $f_n\label{fn}$ in (\ref{conjuntonivel2}). Given $\mathcal{X}_n=\{X_1,\cdots,X_n\}$, the kernel density estimator at point $x$ is defined as\vspace{.1cm}
\begin{equation}
	\label{estimacionnucleo}f_n(x)=\frac{1}{nh^d}\sum_{i=1}^n K\left(\frac{x-X_i}{h}\right),
\end{equation}where $h  $ is a bandwidth and $K$, a kernel function. The estimator defined in (\ref{estimacionnucleo}) is heavily dependent on $h$, see \cite{chacon}. Thus, this group of methods
proposes $\hat{L}(\tau) = \{f_n\geq \hat{f}_\tau\}$ as an estimator of $L(\tau )$, where $\hat{f}_\tau$ denotes an estimator of the threshold $f_\tau$, see \cite{cad1}, \cite{hyn} and \cite{cad2}. The plug-in methodology is the most common approach, and has received
considerable attention in the literature, e.g., \cite{r4}, \cite{bai3}, \cite{mas}, \cite{rig}, \cite{rig2}, \cite{pol2} or \cite{chen}. However, one practical problem of the plug-in methodology is the choice of matrix $h$. Unlike density estimation, the level set estimation has been considered in literature from many points of view but, in general, without deepening in methods for selecting $h$. In fact, this problem was first considered by \cite{bai2} in the context of nonparametric statistical quality control. A plug-in procedure that is based on an empirical density estimator, the regular histogram, was presented in \cite{sin}. Later, an automatic bandwidth selection rule to estimate density level sets but only in the one-dimensional case was derived in \cite{sam}.

The \emph{excess mass estimation} assumes that the researcher has information a priori about the shape of the level set $G(t)$. This approach was first proposed by \cite{har2} and \cite{mul}. See \cite{gru} too. Some previous contributions can be seen in \cite{cher} and \cite{edd}. Then, \cite{pol} extended and investigated it in a very general framework and \cite{mul1} proposed an efficient algorithm for estimating one-dimensional
sets by assuming that the theoretical level set can be written as a finite union
of $M$ closed intervals assuming that $M$ is known a priori. These algorithms assume that $G(t)$ maximizes the functional\vspace{-.1mm}
$$H_t(B)=\mathbb{P}(B)-t\mu(B),\vspace{-.1mm}$$on the Borel sets $B$ where $\mathbb{P}$ denotes the probability measure induced by $f$ and $\mu\label{lebesgue2}$, the Lebesgue measure. Furthermore, $H_t$ can be estimated empirically. So, if $G(t)$ is assumed to belong to a family of sets then it could be reconstructed by maximizing the empirical version of the previous functional on the family considered. Consequently, unlike the plug-in approximation, excess mass methods do not need to smooth the sample $\mathcal{X}_n$ and, in addition, they impose geometric restrictions on the estimators. These methods were not designed for estimating the level set $L(\tau)$ but they can be adapted easily, see \cite{saa}.

The last methodology is a \emph{hybrid} of the two previous ones. Just as the excess mass methods, the hybrid methodology assumes some shape restrictions on the class of sets considered and, like the plug-in methods, it needs to smooth the data set. In \cite{r2}, it is proposed the granulometric smoothing method to reconstruct level sets $L(\tau)$ assuming that $L(\tau)$ and the closure of its complement $\overline{L(\tau)^c}$ are both $r-$convex.

This latter shape restriction generalizes the convexity property, see \cite{r33}. A closed set $A\subset\mathbb{R}^d$ is said to be $r-$convex, for some $r>0$, if $A=C_{r}(A)$, where
$$C_{r}(A)=\bigcap_{\{B_r(x):B_r(x)\cap
	A=\emptyset\}}\left(B_r(x)\right)^c$$
denotes the $r-$convex hull of $A$ and $B_r(x)$, the open ball with
center $x$ and radius $r$. In Figure \ref{coroa}, $C_{r}(A)$ is shown for different values of $r$ when $A$ is a uniform sample on a circular ring. Note that the boundary of the $r-$convex hull is formed by arcs of balls of radius $r$ (besides possible isolated sample points). If $A$ is $r-$convex, it is easy to prove that $A$ is also $r^*-$convex for all $0<r^*\leq r$. Furthermore, it can be seen that $C_{r^*}(A)\subset C_{r}(A)$ for all $0<r^*\leq r$. See \cite{r2} and \cite{kor22} for more details on $r-$convexity.

In addition, if $A$ and $\overline{A^c}$ are both $r-$convex, it is verified that the reach of the boundary of $A$ is bigger than $r$, see \cite{arias}. The reach $r_\mathfrak{M}$ of a manifold $\mathfrak{M}$ is an important geometric characteristic that corresponds to the largest number such that any point at distance less than $r_\mathfrak{M}$ from $\mathfrak{M}$ has an unique nearest
point on $\mathfrak{M}$. Its estimation plays an interesting role in manifold reconstruction, homological inference, volume estimation or manifold clustering. See \cite{aamari} for motivation and references.

The $r-$convex hull is also closely related to the closing of $A$ by $B_r(0)$ from the mathematical morphology, see \cite{serr}. It can be shown that
$$C_{r}(A)=(A\oplus r B)\ominus r B,$$
where $B=B_1(0)$, $\lambda C=\{\lambda c: c\in C\}$, $C\oplus D=\{c+d:\ c\in C, d\in D\}$ and $C\ominus D=\{x\in\mathbb{R}^d:\ \{x\}\oplus D\subset C\}$, for $\lambda \in \mathbb{R}$ and
sets $C$ and $D$.\vspace{-.15cm}

\begin{figure}[h!]  	 	
	\hspace{1.5cm}\includegraphics[scale=.24]{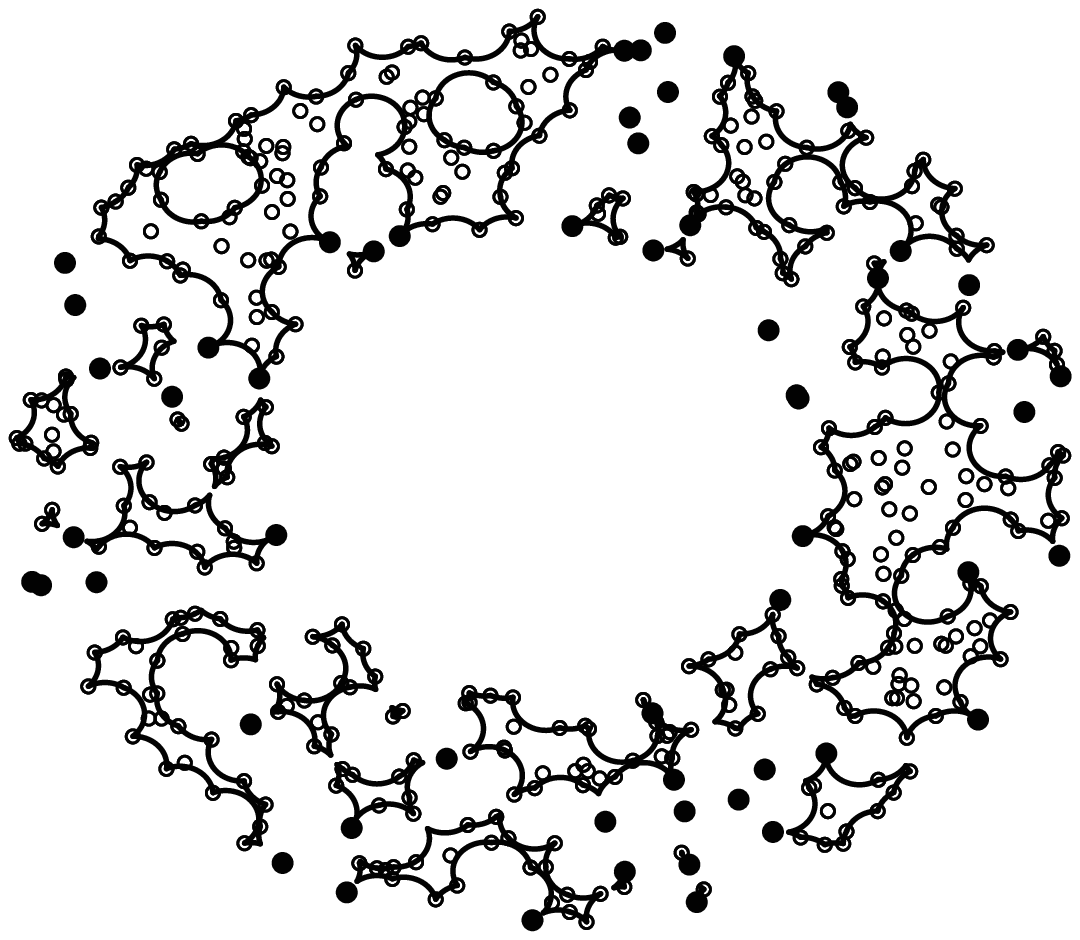}\includegraphics[scale=.24]{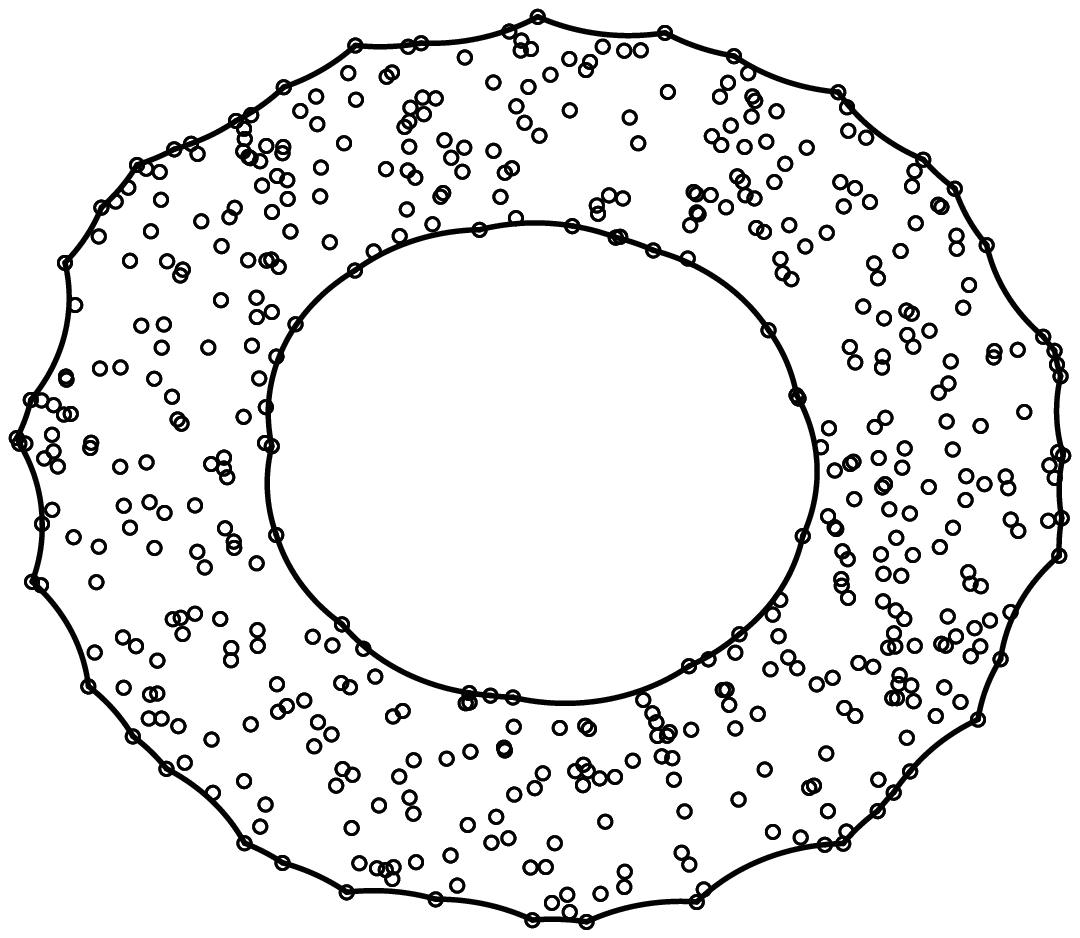}\includegraphics[scale=.24]{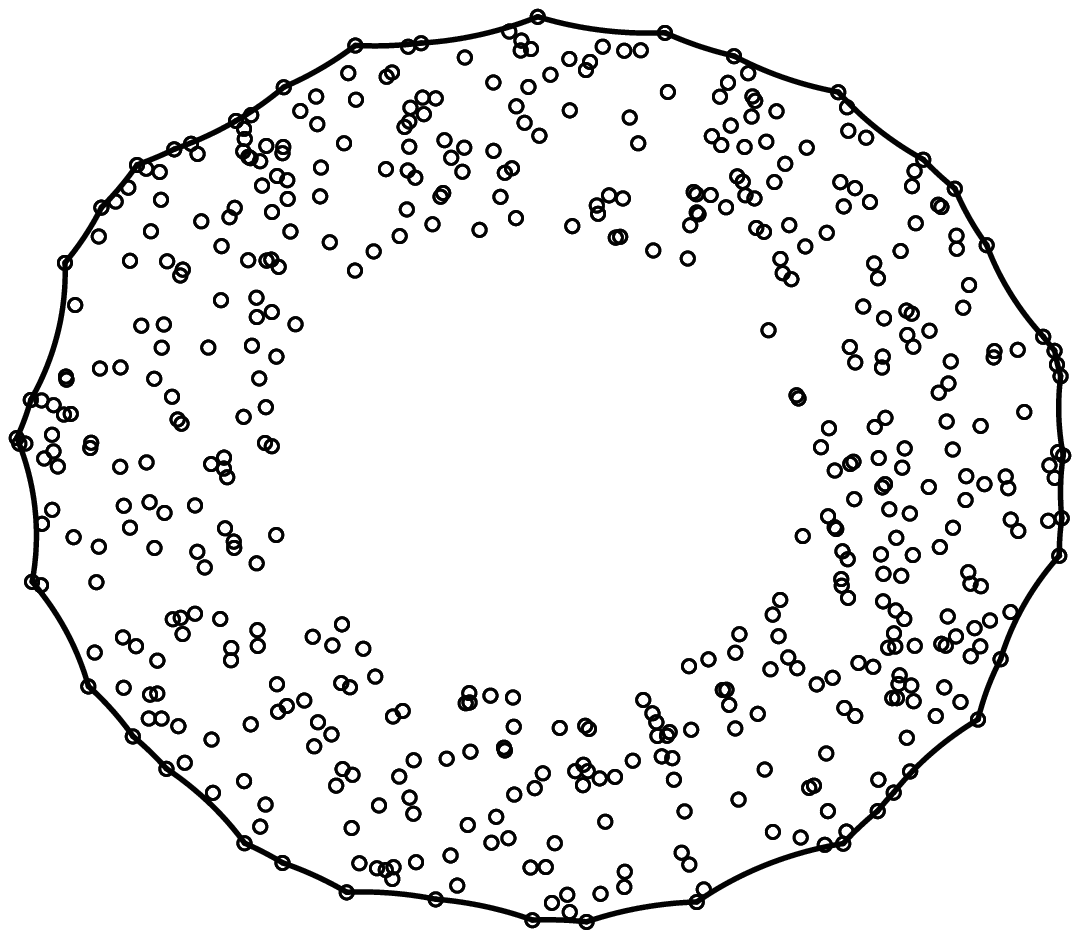}\\
	\vspace{-1.05cm}
	\caption{$A$ denotes a uniform sample on the circular ring $B_{0.4}[(0,0)]\setminus B_{0.2}[(0,0)] $. $C_r(A)$ is shown for $r$ equal to $0.025$ (right), $0.15$ (center) and $0.25$ (left).}\vspace{-.15cm}
	\label{coroa}
\end{figure}

If a density level set is assumed to be $r-$convex and the subset $\mathcal{X}_n^f$ of $\mathcal{X}_n$ inside the level set was known, the most natural estimator would be the $r-$convex hull of $\mathcal{X}_n^f$. However, in practice, the set $\mathcal{X}_n^f$ is unknown and available results in literature do not give any criterion for selecting the smoothing parameter $r$ from $\mathcal{X}_n$. The aim of this paper is to overcome these two drawbacks for proposing a fully data-driven level set estimator. A consistent estimator of the parameter $r$ will be proposed and a subset of $\mathcal{X}_n$ contained, with probability one, in the level set will be determinated. We will show that the proposed method is optimal in the minimax sense.

Two metrics between sets are considered in order to assess the performance of a level set estimator. Let $A$
and $C$ be two closed, bounded, nonempty subsets of $\mathbb{R}^{d}$. The
Hausdorff distance between $A$ and $C$ is defined by\vspace{-0.13cm}
$$
d_{H}(A,C)=\max\left\{\sup_{a\in A}d(a,C),\sup_{c\in C}d(c,A)\right\},\vspace{-0.13cm}
$$
where $d(a,C)=\inf\{\|a-c\|:c\in C\}$ and $\|\mbox{ }\|$ denotes the Euclidean norm.
On the other hand, if $A$ and $C$
are two bounded and Borel sets then the distance in measure between $A$ and $C$ is defined by $d_{\mu}(A,C)=\mu(A\triangle C)$, where $\triangle$ denotes the symmetric difference, that is, $A\triangle C=(A \setminus C)\cup(C \setminus A). $

This paper is organized as follows. In Section \ref{2}, the estimator of the parameter $r$ is formally defined. Uniform consistency on level $t$ for the parameter estimator is established in Section \ref{3}. In addition, it is shown that the resulting level set estimator is able to achieve minimax rates for Hausdorff metric and distance in measure, up to log factors, in most cases where the minimax rates are known, see \cite{mam}. Unlike method proposed in \cite{r2}, these rates do not depend on any penalty term where $r$ is not estimated from $\mathcal{X}_n$. The numerical questions involving the practical application of the algorithm are analyzed in Section \ref{4}. Furthermore, an illustration using a real data set is shown. In Section \ref{5}, conclusions were established. Finally, proofs are deferred to Section \ref{6}.

Supplementary material is contained in two appendices. In Appendix A, some theoretical results in \cite{r2} are summarized. They will be used in many proofs. Appendix B contains the proofs of some auxiliary results that will allow to get general consistency results.

\section{Selection of the optimal smoothing parameter}\label{2}	

The problem of reconstructing a $r-$convex density level set using a data-driven procedure can be solved if the smoothing parameter $r$ is estimated from the random sample $\mathcal{X}_n$. The first step is to determine precisely the optimal value of $r$ to be estimated. We propose to estimate the highest value of $r$ which verifies that $G(t)$ is $r-$convex. Of course, this value depends on the level $t$

\begin{definition}\label{r_0_t}Let $G(t)$ be a compact, nonempty, nonconvex and $r-$convex level set for some $r>0\label{r0t}$. It is defined
	\begin{equation}\label{estimadorr0levelsetu}
		r_0(t)=\sup\{\gamma>0:C_\gamma(G(t))=G(t)\}.
	\end{equation}
\end{definition}For simplicity in the exposition, it is assumed that $G(t)$ is not convex. Of course, if $G(t)$ is convex, $ r_0(t)$ would be infinity. Therefore, the convex hull could be used to estimate $G(t)$ intead of the $r-$convex hull. In addition, it can be proved that, under a mild regularity condition, the supreme established in (\ref{estimadorr0levelsetu}) is a maximum, that is, $G(t)$ is also $r_0(t)-$convex. The regularity property we need to establish the consistency results is slightly stronger than $r-$convexity:\vspace{4mm}\\
($R_{\lambda}^r$) A closed ball of radius $\lambda>0$ rolls freely in $G(t)$ (inside) and a closed \\ \textcolor[rgb]{1.00,1.00,1.00}{$ $ $ $ $ $    $ $ $ $ $ $ } ball of radius $r>0$ rolls freely in $\overline{G(t)^c}$ (outside).\vspace{.3mm}\\

Following \cite{r2} and \cite{r33}, a closed ball of radius $\gamma>0$ rolls freely in a closed set $A$ if for each boundary point $b\in\partial A$ there exists $x\in\mathbb{R}^d$ such that $b\in B_\gamma[x]\subset A$ where $B_\gamma[x]$ denotes the closed ball of radius $\gamma$ centering at $x$. Under ($R_{\lambda}^r$), we can justify the optimality of $r_0(t)$, see  (\ref{estimadorr0levelsetu}). It is clear that $G(t)$ is $r-$convex for $r \leq r_0(t)$ but if $r < r_0(t)$, $C_r(\mathcal{X}_n^f)$ is
a non admisible estimator since it is always outperformed by $C_{r_0(t)}(\mathcal{X}_n^f)$. Then, if $r \leq r_0(t)$ it is verified that, with probability one,
$C_r(\mathcal{X}_n^f) \subset C_{r_0 (t)}(\mathcal{X}_n^f) \subset G(t)$ and hence, $d_\mu(C_{r_0(t)} (\mathcal{X}_n^f), G(t)) \leq d_\mu(C_r(\mathcal{X}_n^f), G(t))$ (the same holds for the Hausdorff distance).
It should also noted that, for $r > r_0(t)$, $C_r(\mathcal{X}_n^f)$ would considerably overestimate $G(t)$.

In addition, satisfying the shape condition ($R_{\lambda}^r$) is a quite natural general property for level sets of densities. In fact, Theorem 2 in \cite{r2} proved that, under some assumptions on the density $f$, its level sets satisfy ($R_{\lambda}^r$) for $r=\lambda=m/k$. Then, according to Theorem 2 in \cite{r2}, the following assumptions are considered on $f$:

\begin{description}
	\item[A. ]\begin{enumerate}\item The threshold $t$ of $G(t)$ belongs to $[l,u]$ with $-\infty<l\leq u<\sup (f)<\infty$.
		\item $f\in\mathcal{C}^p(U)$, $p\geq 1$ where $U$ is a bounded open set containing $\overline{G(l-\zeta)}\setminus \interior (G(u+\zeta))$ for some $\zeta>0$  where $G(u+\zeta)$ is bounded.
		\item The gradient of $f$, $\nabla f$, satisfies $|\nabla f|\geq  m  >0$ as well as Lipschitz condition on $U$:
		$$|\nabla f(x)-\nabla f(y)|\leq k|x-y|\mbox{ for }x,y\in U.$$
	\end{enumerate}
\end{description}

Under (A), it is verified that $r_0(t)\geq m/k$. In addition, sets fulfilling condition ($R_{\lambda}^r$) have a number of desirable properties which make them easier to handle. In particular, it can be seen that condition ($R_{\lambda}^r$) implies $r-$convexity. More generally, it can be proved that,
under the condition ($R_{\lambda}^r$), the
$r$-rolling condition implies the
$r-$convexity. It
should be noted that this is not true in general, see \cite{cue22}, but
this implication holds for the smooth sets which satisfy ($R_{\lambda}^r$).

The estimator for the optimal parameter defined in (\ref{estimadorr0levelsetu}) depends  on a sequence $D_n$ satisfying the assumption:
\begin{description}
	\item[D. ]$D_n$ is equal to $M(\log{n}/n)^{p/(d+2p)}$ for a big enough value of the constant $M>0$.\label{Dn}
\end{description}

\begin{definition}\label{jejejeje}Let $G(t)$ be a compact, nonempty and nonconvex level set. Under assumptions (A) and (D), let $\mathcal{X}_n$ be a random sample generated from a distribution with density function $f$. An estimator for the parameter established in Definition \ref{r_0_t} can be defined as
	\begin{equation}\label{estimadorr0hat}
		\hat{r}_0(t)=\sup\{\gamma>0:C_\gamma(\mathcal{X}_n^+(t))\cap \mathcal{X}_n^-(t)=\emptyset\},
	\end{equation}where
	$$
	\mathcal{X}_n^+(t)=\{X\in\mathcal{X}_n: f_n(X)\geq t+D_n\}\mbox{ and }\mathcal{X}_n^-(t)=\{X\in\mathcal{X}_n: f_n(X)< t-D_n\}.
	\label{mas2}$$
\end{definition}

The original sample $\mathcal{X}_n$ is divided into three subsamples, $\mathcal{X}_n^+(t)$, $\mathcal{X}_n^-(t)$ and $\mathcal{X}_n\setminus \left(\mathcal{X}_n^+(t)\cup\mathcal{X}_n^-(t)\right)$. From an intuitive point of view, $ \mathcal{X}_n^+(t)$ and $\mathcal{X}_n^-(t)$ should be contained in $G(t)$ and its complementary, respectively. This property is proved in Proposition \ref{mostramaisenGlambda}. In addition, Proposition \ref{alberto5} ensures that $\mathcal{X}_n^+(t)\neq\emptyset$. 	If $G(t)$ is nonconvex then it can be seen that, with probability one and for $n$ large enough, the set $\{\gamma>0: C_\gamma(\mathcal{X}_n^+(t))\cap \mathcal{X}_n^-(t)=\emptyset\}$ is nonempty and upper bounded. So, the estimator proposed in (\ref{estimadorr0hat}) is well-defined. In order to guarantee that the estimator satisfies these interesting and natural properties, two conditions on the kernel estimator $f_n$ of $f$ must be considered, see again \cite{r2} for more details:
\begin{description}
	\item[K. ]\begin{enumerate}
		\item The kernel function $K$ is a continuous kernel of order at least $p$ with bounded support and finite variation.
		\item The bandwidth $h$ is of the order $(\log{n}/n)^{1/(d+2p)}$.
	\end{enumerate}
\end{description}

\begin{proposition}\label{mostramaisenGlambda}Let $G(t)$ be a compact and nonempty level set. Under assumptions (A), (D) and (K), let $\mathcal{X}_n$ be a random sample generated by the density function $f$ and let $\mathcal{X}_n^+(t)$ and $\mathcal{X}_n^-(t)$ be as established in Definition \ref{jejejeje}. Then,
	\begin{equation*}\mathbb{P}\left(\mathcal{X}_n^+(t)\subset G(t),\mbox{ }\mathcal{X}_n^-(t)\subset G(t)^c,\mbox{ }\forall t \in[l,u],\mbox{ eventually}\right)=1.\end{equation*}
\end{proposition}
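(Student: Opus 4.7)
The plan is to reduce the proposition to a single almost-sure uniform deviation bound for the kernel density estimator. Under assumptions (A) and (K), standard results of Gin\'e-Guillou / Einmahl-Mason type for kernel estimators yield that $\sup_{x\in U}|f_n(x)-f(x)|=O((\log n/n)^{p/(d+2p)})$ almost surely, with the rate coming from the usual balance of bias $O(h^p)$ and stochastic fluctuation $O(\sqrt{\log n/(nh^d)})$ at the bandwidth $h\sim(\log n/n)^{1/(d+2p)}$ prescribed by (K). Taking $M$ large enough in (D) and applying Borel-Cantelli, I obtain
\[
\mathbb{P}\!\left(\sup_{x\in U}|f_n(x)-f(x)|\le D_n\ \text{eventually}\right)=1.\qquad(\star)
\]
This is precisely the tool already exploited in \cite{r2}, so in practice the work here is to unwrap definitions on the good event.

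On the event in $(\star)$, fix $n$ large and any $t\in[l,u]$. If $X\in\mathcal{X}_n^+(t)\cap U$, then $f(X)\ge f_n(X)-D_n\ge t$, so $X\in G(t)$. Symmetrically, if $X\in\mathcal{X}_n^-(t)\cap U$, then $f(X)\le f_n(X)+D_n<t$, so $X\in G(t)^c$. Crucially, the bound in $(\star)$ does not depend on $t$, so both inclusions hold simultaneously for every $t\in[l,u]$, which is exactly the uniformity required by the statement.

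For sample points $X\in\mathcal{X}_n\setminus U$, I argue by the construction of $U$: since $U\supset\overline{G(l-\zeta)}\setminus\interior(G(u+\zeta))$, any such $X$ must lie either in $\interior(G(u+\zeta))$, in which case $f(X)\ge u+\zeta>t$ and $X\in G(t)$ trivially; or outside $\overline{G(l-\zeta)}$, in which case $f(X)<l-\zeta<t$ and $X\in G(t)^c$ trivially. To ensure such an $X$ is not misclassified into the wrong subsample $\mathcal{X}_n^\mp(t)$, I extend $(\star)$ to a slight enlargement of $U$ (available since $f$ is bounded globally and $K$ has bounded support), which guarantees that $f_n(X)$ sits within $\zeta/2$ of $f(X)$ for $n$ large, so for $D_n\le\zeta/2$ the point cannot satisfy the opposite threshold condition.

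The main obstacle is really $(\star)$ itself: one needs the almost-sure uniform-in-$x$ rate at the exact threshold $D_n$, not merely a pointwise or in-probability statement, and the constant $M$ in (D) must dominate the implicit constant from the concentration inequality. Once $(\star)$ is granted, everything else is a two-line algebraic check that is automatically uniform in $t\in[l,u]$.
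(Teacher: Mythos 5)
Your treatment of the sample points falling in the smooth region is essentially the paper's own argument (there the uniform rate is stated on a compact set $C$ having a positive buffer inside $U$, via Proposition \ref{enPruebaTeorema3Walther}, rather than on all of the open set $U$ --- a detail worth fixing, since the $O(h^p)$ bias bound needs $f\in\mathcal{C}^p$ in a neighbourhood of each point and near $\partial U$ the kernel window leaves $U$). The genuine gap is in your third paragraph, where you rule out misclassification of sample points $X\notin U$. You propose to ``extend $(\star)$ to a slight enlargement of $U$'' so that $|f_n(X)-f(X)|\le\zeta/2$ there, justified only by global boundedness of $f$ and compactness of the support of $K$. This fails twice over. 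First, assumption (A) gives smoothness of $f$ only on $U$; off $U$ the density may be arbitrarily irregular, so the bias of $f_n$ is not $O(h^p)$ --- not even uniformly small --- and no deviation bound of size $D_n$ (or $\zeta/2$) is available; boundedness of $f$ and a compactly supported kernel yield nothing of the sort. Second, even if such an extension were valid, a slight enlargement of $U$ does not contain $\mathcal{X}_n\setminus U$: sample points occur deep inside $\interior(G(u+\zeta))$ (the high-density core, which $U$ is not assumed to contain) and in far low-density tails, at arbitrary distance from $U$, and for those points you say nothing about $f_n$. Yet it is precisely there that misclassification must be excluded: a point near the mode with $f_n(X)<t-D_n$ would violate $\mathcal{X}_n^-(t)\subset G(t)^c$, and a tail point with $f_n(X)\ge t+D_n$ would violate $\mathcal{X}_n^+(t)\subset G(t)$.

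The paper closes exactly this hole with a different tool: the one-sided bounds of Proposition \ref{enPruebaTeorema3Walther3} (Walther's equation (15)), which give, almost surely eventually, $f_n>u+w/2$ on $G(u)\cap C^c$ and $f_n<l-w/2$ on $G(l)^c\cap C^c$ for some $w>0$. These bounds do not follow from ``$f_n\approx f$'' off the smooth region, and they are what prevents a sample point outside $C$ from landing in the wrong subsample, uniformly in $t\in[l,u]$ (since $l\le t\le u$). To repair your proof you would need either to establish such one-sided control of $f_n$ off the compact set where the uniform rate holds, or to strengthen the hypotheses; as written, the step handling $\mathcal{X}_n\setminus U$ does not go through.
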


Proposition \ref{alberto5} uniformly bounds the distance between $G(t)$ and $\mathcal{X}_n^+(t)$  guaranteeing, in addition, that the set $\mathcal{X}_n^+(t)$ is nonempty with probability one and for $n$ large enough.

\begin{proposition}\label{alberto5}Let $G(t)$ be a compact, nonempty and nonconvex level set. Under assumptions (A), (D) and (K), let $\mathcal{X}_n$ be a random sample generated by the density function $f$ and let $\mathcal{X}_n^+(t)$ be as established in Definition \ref{jejejeje}. Then, for all $\epsilon>0$ it is verified that
	$$\mathbb{P}\left(\sup_{t\in[l,u]}\sup_{x\in G(t)}d(x,\mathcal{X}_n^+(t))\leq \epsilon, \mbox{ eventually}\right)=1.$$

\end{proposition}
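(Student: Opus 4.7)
The plan is to prove the statement by a discretization argument combined with the uniform consistency of $f_n$ and the geometric regularity provided by the rolling ball condition. Fix $\epsilon>0$. Both $G(t)$ and $\mathcal{X}_n^+(t)$ are monotonically decreasing in $t$ (the first by definition, the second because $\mathcal{X}_n^+(t)=\{X\in\mathcal{X}_n: f_n(X)\geq t+D_n\}$), so for any $t\in[t_{j-1},t_j]$ we have
$$\sup_{x\in G(t)}d(x,\mathcal{X}_n^+(t))\leq \sup_{x\in G(t_{j-1})}d(x,\mathcal{X}_n^+(t_j)).$$
Thus, after partitioning $[l,u]$ into finitely many sub-intervals of length at most $\delta$ (to be chosen below), it suffices to prove $\sup_{x\in G(t_{j-1})}d(x,\mathcal{X}_n^+(t_j))\leq\epsilon$ a.s.\ eventually for each of the finitely many pairs $(t_{j-1},t_j)$ with $t_j-t_{j-1}\leq\delta$.

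The argument relies on three ingredients. First, a standard uniform consistency result for kernel density estimators under (A) and (K), used already in \cite{r2}, yields $\sup_x|f_n(x)-f(x)|\leq D_n$ eventually a.s. Second, the lower bound $|\nabla f|\geq m$ on $U$, together with Lipschitz continuity of $\nabla f$, implies that for $s\leq s'$ close to $[l,u]$,
$$\sup_{x\in G(s)}d(x,G(s'))\leq (s'-s)/m + O((s'-s)^2),$$
obtained by moving from $x$ in the direction of $\nabla f(x)$ until the density reaches the value $s'$. Third, Theorem 2 of \cite{r2} ensures that $(R_\lambda^r)$ holds for every level set $G(s)$ with $s$ in a neighbourhood of $[l,u]$; this, combined with $f\geq s>0$ on $G(s)$, produces a uniform lower bound of the form $c_1\rho^d$ on the $\mathbb{P}$-mass of any ball $B_\rho(y)$ centred at $y\in G(s)$, for $\rho$ small enough and simultaneously for all admissible $s$ and $y$.

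Given a pair $(t_{j-1},t_j)$, choose an auxiliary $\gamma>0$ with $(\delta+\gamma)/m<\epsilon/2$. For $x\in G(t_{j-1})$, the second ingredient provides $y\in G(t_j+\gamma)$ with $\|x-y\|\leq\epsilon/2$ eventually. The third ingredient, via a standard covering argument over the compact set $G(t_j+\gamma)$ and Borel--Cantelli, shows that with probability one there is eventually a sample point $X_i\in G(t_j+\gamma)$ with $\|X_i-y\|\leq\epsilon/2$. Since $2D_n<\gamma$ eventually, $f(X_i)\geq t_j+\gamma\geq t_j+2D_n$, and the first ingredient then yields $f_n(X_i)\geq t_j+D_n$, i.e., $X_i\in\mathcal{X}_n^+(t_j)$. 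Combining, $\|x-X_i\|\leq\epsilon$, and taking the union over the finitely many pairs $(t_{j-1},t_j)$ finishes the proof.

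The main obstacle will be making the sample-density step uniform in $t$: one needs an explicit, location-independent mass lower bound for small balls centred anywhere in $G(t_j+\gamma)$, including boundary points. The rolling ball property is exactly what delivers this, since any such $y$ is contained in an inscribed ball $B_\lambda[z]\subset G(t_j+\gamma)$, forcing $B_\rho(y)\cap G(t_j+\gamma)$ to contain a spherical cap whose volume is bounded below purely in terms of $\rho$ and $\lambda$. The remaining ingredients---the kernel estimator consistency and the gradient-based Hausdorff bound between nearby level sets---are largely routine given the assumptions in force.
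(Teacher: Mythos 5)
Your reduction of the supremum over $t\in[l,u]$ to finitely many pairs $(t_{j-1},t_j)$ via the monotonicity of $t\mapsto G(t)$ and $t\mapsto\mathcal{X}_n^+(t)$ is sound, and the covering/Borel--Cantelli step producing a sample point of $G(t_j+\gamma)$ within $\epsilon/2$ of any $y\in G(t_j+\gamma)$ (using the inscribed-ball cap bound from the rolling condition) is also fine; this is a legitimate alternative to the paper's device of covering the single set $G(l)\ominus(\epsilon/4)B_1[0]$ by $\epsilon/10$-balls, which handles the uniformity in $t$ and in $x$ simultaneously without discretizing the level.

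The genuine gap is your ``first ingredient'': under (A) and (K) you do \emph{not} have $\sup_{x\in\mathbb{R}^d}|f_n(x)-f(x)|\leq D_n$ eventually. Assumption (A) only gives $f\in\mathcal{C}^p$ and the gradient bounds on the open shell $U\supset\overline{G(l-\zeta)}\setminus \interior(G(u+\zeta))$; inside $G(u+\zeta)$ the density is not assumed smooth, so the bias of $f_n$ need not be $O(h^p)$ there and the rate $(\log n/n)^{p/(d+2p)}$ is only available on a compact set $C$ containing a neighbourhood of $G(l)\setminus\interior(G(u))$ (Proposition A.1, taken from Walther's proof). Your sample point $X_i$ can lie deep inside the level set, outside $C$, because $x$ ranges over all of $G(t_{j-1})$; at such a point the inference ``$f(X_i)\geq t_j+2D_n$ hence $f_n(X_i)\geq t_j+D_n$'' is unjustified. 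This is precisely why the paper's Step 4 splits into the cases $x_i\in C$ and $x_i\notin C$: in the first case it uses the sup-norm bound on $C$, and in the second it invokes the separate fact (Proposition A.2, Walther's equation (15)) that $\inf_{G(u)\cap C^c}f_n> u+w/2$ eventually, which gives $f_n(X_i)\geq u+w/2\geq t_j+D_n$ directly. Your argument needs this additional case distinction (or some substitute for uniform consistency off $C$); with it, the rest of your plan goes through.
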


Corollary \ref{xeneralizacionsoporte} shows, in particular, that $\mathcal{X}_n^+(t)$ is a consistent estimator for $G(t)$ in Hausdorff distance uniformly in $t$. In this point, it is important to remember a similar property for the support $S$ of the density $f$. The set of sample points $\mathcal{X}_n$ is a Hausdorff consistent estimator for $S$ too.

\begin{corollary}\label{xeneralizacionsoporte}Let $G(t)$ be a compact, nonempty and nonconvex level set. Under assumptions (A), (D) and (K), let $\mathcal{X}_n$ be a random sample generated by the density function $f$ and let $\mathcal{X}_n^+(t)$ be as established in Definition \ref{jejejeje}. Then, for all $\epsilon>0$ it is verified that
	$$\mathbb{P}\left(\sup_{t\in[l,u]} d_H( G(t),\mathcal{X}_n^+(t) )\leq \epsilon,  \mbox{ eventually}\right)=1.$$
\end{corollary}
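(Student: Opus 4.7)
The plan is to observe that the statement follows almost immediately from the two propositions that precede it, by unpacking the definition of the Hausdorff distance into its two constituent suprema. Recall that
\[
d_H(G(t),\mathcal{X}_n^+(t))=\max\!\left\{\sup_{x\in G(t)}d(x,\mathcal{X}_n^+(t)),\ \sup_{y\in\mathcal{X}_n^+(t)}d(y,G(t))\right\}.
\]
So I would handle the two terms separately and then take the maximum uniformly in $t\in[l,u]$.

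First, the second supremum: by Proposition \ref{mostramaisenGlambda} there exists an event $\Omega_1$ of probability one on which $\mathcal{X}_n^+(t)\subset G(t)$ simultaneously for every $t\in[l,u]$, for all $n$ large enough. On $\Omega_1$, every point $y\in\mathcal{X}_n^+(t)$ already lies in $G(t)$, so $d(y,G(t))=0$ and hence $\sup_{y\in\mathcal{X}_n^+(t)}d(y,G(t))=0$ uniformly in $t$. (This step also requires $\mathcal{X}_n^+(t)$ to be nonempty for the Hausdorff distance to be well defined; that is guaranteed by Proposition \ref{alberto5}, which forces $\mathcal{X}_n^+(t)\neq\emptyset$ eventually, uniformly in $t$.)

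Second, the first supremum: given $\epsilon>0$, Proposition \ref{alberto5} supplies an event $\Omega_2$ of probability one on which
\[
\sup_{t\in[l,u]}\sup_{x\in G(t)}d(x,\mathcal{X}_n^+(t))\leq\epsilon,\quad\text{eventually.}
\]
On the intersection $\Omega_1\cap\Omega_2$, which still has probability one, both suprema are controlled simultaneously, so
\[
\sup_{t\in[l,u]} d_H(G(t),\mathcal{X}_n^+(t))\leq\epsilon
\]
eventually, which is exactly the claim.

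There is essentially no obstacle beyond bookkeeping: the real work has already been done in Propositions \ref{mostramaisenGlambda} and \ref{alberto5}, and the corollary is just the combination of an inner containment (killing one half of $d_H$) with a uniform covering estimate (controlling the other half). The only point requiring a small amount of care is to take the intersection of the two almost-sure events so that both bounds hold on the same sample path and for all $t\in[l,u]$ simultaneously.
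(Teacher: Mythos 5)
Your proposal is correct and is exactly the paper's argument: the paper simply states that the corollary is a straightforward consequence of Propositions \ref{mostramaisenGlambda} and \ref{alberto5}, and your decomposition of $d_H$ into the two suprema (one killed by the inclusion $\mathcal{X}_n^+(t)\subset G(t)$, the other controlled uniformly by Proposition \ref{alberto5}) together with intersecting the two almost-sure events is the intended bookkeeping.
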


\section{Consistency results}\label{3}

The consistency of the estimator for the smoothing parameter and the resulting estimator for density level sets are established in Sections \ref{smc} and \ref{edls}, respectively.
\subsection{Consistency of the estimator for the smoothing parameter}\label{smc}

In this section, the consistency for the estimator proposed in (\ref{estimadorr0hat}) will be established in Theorem \ref{co}. It is proved the uniform convergence in $t\in[l,u]$ for the estimator proposed for the parameter $r_0(t)$.

\begin{theorem}\label{co}Let $G(t)$ be a compact, nonempty and nonconvex level set. Under assumptions (A), (D) and (K), let $r_0(t)$ and $\hat{r}_0(t)$ be as established in Definitions \ref{r_0_t} and \ref{jejejeje}, respectively. Then, for any $\epsilon>0$,
	\begin{equation}\label{kjooo}
		\mathbb{P}\left(\sup_{t\in[l,u]}|\hat{r}_0(t)-r_0(t)|\leq \epsilon, \mbox{ eventually}\right)=1.
	\end{equation}
\end{theorem}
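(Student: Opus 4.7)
The plan is to establish the two-sided inequality $r_0(t) - \epsilon \le \hat{r}_0(t) \le r_0(t) + \epsilon$ uniformly in $t\in[l,u]$, eventually almost surely, which gives (\ref{kjooo}).

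For the lower bound, by Proposition \ref{mostramaisenGlambda}, eventually and uniformly in $t\in[l,u]$, $\mathcal{X}_n^+(t) \subset G(t)$ and $\mathcal{X}_n^-(t) \subset G(t)^c$. Fix $\gamma = r_0(t) - \epsilon$, which is strictly positive since $r_0(t) \ge m/k > 0$ under assumption (A). By Definition \ref{r_0_t}, $C_\gamma(G(t)) = G(t)$, and by monotonicity of $C_\gamma$ in its set argument, $C_\gamma(\mathcal{X}_n^+(t)) \subset C_\gamma(G(t)) = G(t)$. Therefore $C_\gamma(\mathcal{X}_n^+(t)) \cap \mathcal{X}_n^-(t) \subset G(t)\cap G(t)^c = \emptyset$, which yields $\hat{r}_0(t) \ge \gamma = r_0(t)-\epsilon$, uniformly in $t$.

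The upper bound, $\hat{r}_0(t) \le r_0(t) + \epsilon$, is the delicate step: writing $\gamma = r_0(t) + \epsilon$, we must exhibit a point in $C_\gamma(\mathcal{X}_n^+(t)) \cap \mathcal{X}_n^-(t)$, eventually and uniformly in $t$. First, a geometric fact derived from the rolling condition $(R_\lambda^r)$ (which holds uniformly in $t$ with $\lambda = r = m/k$ by Theorem 2 of \cite{r2}): there exists $\rho(\epsilon)>0$, depending only on $\epsilon$ and the rolling parameters, such that the excess set $C_\gamma(G(t))\setminus G(t)$ contains an open ball $B_t\subset G(t)^c$ of radius $\rho(\epsilon)$ located at distance at least $\rho(\epsilon)$ from $\partial G(t)$. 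This follows from the geometry of the bottleneck configuration that witnesses the failure of $\gamma$-convexity, together with the auxiliary results collected in Appendix A. Second, by Hausdorff stability of the $r$-convex hull and the uniform Hausdorff consistency $d_H(\mathcal{X}_n^+(t), G(t))\to 0$ provided by Corollary \ref{xeneralizacionsoporte}, eventually $C_\gamma(\mathcal{X}_n^+(t))$ contains a ball $B'_t\subset G(t)^c$ of radius $\rho(\epsilon)/2$ at distance at least $\rho(\epsilon)/2$ from $\partial G(t)$, uniformly in $t$. Third, on such a ball the inequality $|\nabla f|\ge m$ yields $f\le t - m\rho(\epsilon)/2$, and the uniform convergence $\|f_n-f\|_\infty = O(D_n)$ ensures that, for $n$ large, any sample point lying in $B'_t$ automatically belongs to $\mathcal{X}_n^-(t)$. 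A standard covering and Borel--Cantelli argument (after partitioning $[l,u]$ finely enough to replace the family $\{B'_t\}_{t\in[l,u]}$ by a finite net of fixed balls) shows that a.s.\ eventually $B'_t$ contains a sample point for every $t\in[l,u]$; such a sample point lies in $C_\gamma(\mathcal{X}_n^+(t))\cap \mathcal{X}_n^-(t)$, giving $\hat{r}_0(t)\le \gamma = r_0(t)+\epsilon$.

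The main obstacle is the geometric step above: extracting, quantitatively and uniformly in $t$, a positive excess thickness $\rho(\epsilon)$ of $C_\gamma(G(t))\setminus G(t)$ from the mere knowledge that $G(t)$ is $r_0(t)$-convex but not $\gamma$-convex for $\gamma = r_0(t)+\epsilon$. The rolling condition prevents degenerate behavior near $\partial G(t)$ and should permit one to realize the failure of $\gamma$-convexity by an explicit ball--cap configuration whose dimensions scale like $\sqrt{\epsilon\, r_0(t)}$; uniformity in $t$ then follows from the uniform lower bound $r_0(t)\ge m/k$ and the uniform regularity of the family $\{G(t)\}_{t\in[l,u]}$ provided by assumption (A).
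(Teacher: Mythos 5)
Your lower bound is sound and is essentially the paper's Proposition \ref{roooo} (the paper in fact gets the sharper one-sided inequality $\hat{r}_0(t)\geq r_0(t)$ eventually, using $\mathcal{X}_n^+(t)\subset C_{r_0(t)}(G(t))=G(t)$ and $\mathcal{X}_n^-(t)\subset G(t)^c$). The gap is in the upper bound, and it sits exactly where you flag it as ``the main obstacle'': you assert that there is a radius $\rho(\epsilon)>0$, depending \emph{only} on $\epsilon$ and the rolling parameters $\lambda=r=m/k$, such that for every $t\in[l,u]$ the excess $C_{r_0(t)+\epsilon}(G(t))\setminus G(t)$ contains a ball of radius $\rho(\epsilon)$ well separated from $\partial G(t)$. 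This is not proved, and it does not follow from the rolling condition alone: the rolling condition bounds curvature but not the higher-order behaviour of the boundary near the feature that realizes $r_0(t)$. For instance, if $\partial G(t)$ contains a concave circular arc of radius exactly $r_0(t)$ of small angular extent $\theta_0$, beyond which the boundary recedes, the set is still $r_0(t)$-convex and fails $\gamma$-convexity for every $\gamma>r_0(t)$, but the excess at $\gamma=r_0(t)+\epsilon$ has thickness of order $\epsilon\theta_0^2$, which can be made arbitrarily small for fixed $\epsilon$ and fixed $m/k$. So a bound $\rho(\epsilon)$ depending only on $\epsilon$, $m$, $k$ cannot be extracted from $(R^r_\lambda)$, and the ``ball--cap configuration scaling like $\sqrt{\epsilon r_0}$'' heuristic does not close this.

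The paper circumvents precisely this difficulty: Proposition B.1 only gives, for each fixed $t$ and each $r>r_0(t)$, a ball $A_t$ of some radius $\rho_t>0$ with \emph{no} quantitative control in $t$; uniformity over $t\in[l,u]$ is then obtained not by a uniform thickness bound but by (i) the one-sided continuity of $t\mapsto r_0(t)$ (Proposition \ref{continuidadlambad}), (ii) stability lemmas transferring the ball at level $t$ to nearby levels $\overline{t}$, both below and above $t$ (Lemmas B.2, B.3 and Propositions B.4, B.5 together with Lemma B.6), and (iii) a finite subcover of the compact interval $[l,u]$, taking the maximum of finitely many ranks $n_j$. If you want to repair your argument you would have to either prove the uniform $\rho(\epsilon)$ for the specific family $\{G(t)\}_{t\in[l,u]}$ generated by $f$ under (A) (which amounts to a compactness-in-$t$ argument anyway), or adopt the pointwise-plus-covering structure of the paper. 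Two smaller glosses in your sketch would also need care: ``Hausdorff stability of the $r$-convex hull'' does not hold at fixed radius $\gamma$ --- the paper absorbs the Hausdorff error by enlarging the radius (e.g.\ $B_{\rho_t/2}(c_t)\subset C_{r+\epsilon}(\mathcal{X}_n^+(t))$ and $B_{\rho_t/4}(c_t)\subset C_{r+2\epsilon}(\mathcal{X}_n^+(\overline{t}))$ in Proposition B.5) --- and the Borel--Cantelli step requires a positive lower bound for $f$ on the balls (the paper arranges $f>l-\zeta$ on $A_t$, $B_t$), as well as the kernel bound $\sup_C|f_n-f|$ being available only on the compact $C$, so the balls must be placed inside $C$.
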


Theorem \ref{co} establishes the uniform consistency for $\hat{r}_0(t)$. The shape parameter $r_0(t)$ is a lower bound for the reach under mild geometric restrictions on $G(t)$. This problem has been recently considered for
a manifold, see \cite{aamari}, for references and examples.

\subsection{Consistency of the resulting estimator for density level sets}\label{edls}

Once the consistency for the estimator of the smoothing parameter $r_0(t)$ defined in
(\ref{estimadorr0levelsetu}) was studied, it would be natural to consider $C_{\hat{r}_0(t)}(\mathcal{X}_n^+(t))$
as an estimator for the level
set $G(t)$. However, the consistency can not be guaranteed in this case at least in general. We will propose $C_{r_n(t)}(\mathcal{X}_n^+(t))$ as the estimator of the level set $G(t)$ where $r_n(t)=\nu \hat{r}_0(t)$ for a fixed value $\nu\in (0,1)$. This reconstruction
of the theoretical level set presents interesting properties. Unlike plug-in estimators, the boundary of $C_{r_n(t)}(\mathcal{X}_n^+(t))$ is easy to handle since it is formed by arcs of balls of radius $r_n(t)$. Furthermore, $C_{r_n(t)}(\mathcal{X}_n^{+}(t))$ is uniformly consistent estimator in $t\in[l,u]$. The convergence rates are provided in Theorem \ref{principal}.

\begin{theorem}\label{principal}Let $G(t)$ be a compact, nonempty and nonconvex level set. Under assumptions (A), (D) and (K), let $\mathcal{X}_n$ be a random sample generated from a distribution with density function $f$, let $\mathcal{X}_n^+(t)$ be as established in Definition \ref{jejejeje} and let $r_n(t)=\nu \hat{r}_0(t)$ where $\nu\in (0,1)$ is a fixed number and $\hat{r}_0(t)$, defined in (\ref{estimadorr0hat}). Then, $$ \sup_{t\in [l,u]}d_H(C_{r_n(t)}(\mathcal{X}_n^+(t)),G(t))=O\left(\max\left\{ \left(  \frac{\log{n}}{n}\right)^{p/(d+2p)},\left(\frac{\log{n}}{n}\right)^{\frac{2}{d+1}}\right\}\right),$$almost surely.
	The same convergence order holds for $d_\mu(C_{r_n(t)}(\mathcal{X}_n^+(t)),G(t))$.
\end{theorem}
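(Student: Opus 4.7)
The plan is to handle the two sides of the Hausdorff distance separately, then transfer the result to the distance in measure. The ``inside'' direction is essentially free: since $\nu<1$ is fixed and Theorem \ref{co} gives $\sup_{t\in[l,u]}|\hat{r}_0(t)-r_0(t)|\to 0$ almost surely, eventually $r_n(t)=\nu\hat{r}_0(t)<r_0(t)$ uniformly in $t\in[l,u]$. Combining this with Proposition \ref{mostramaisenGlambda} (which yields $\mathcal{X}_n^+(t)\subset G(t)$ eventually) and the monotonicity of $C_r(\cdot)$ together with the identity $C_{r_n(t)}(G(t))=G(t)$ (valid since $G(t)$ is $r_0(t)$-convex by Definition \ref{r_0_t} and $r_n(t)<r_0(t)$), one obtains
\begin{equation*}
C_{r_n(t)}(\mathcal{X}_n^+(t))\subset C_{r_n(t)}(G(t))=G(t),
\end{equation*}
so $\sup_{y\in C_{r_n(t)}(\mathcal{X}_n^+(t))}d(y,G(t))=0$ uniformly in $t$ for $n$ large enough.

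For the reverse bound $\sup_{x\in G(t)}d(x,C_{r_n(t)}(\mathcal{X}_n^+(t)))$, I introduce the auxiliary deeper level set $G(t+cD_n)$ for a constant $c>0$ chosen so that sample points inside $G(t+cD_n)$ satisfy $f_n\geq t+D_n$. Under (K) one has $\sup_{x\in U}|f_n(x)-f(x)|=O(D_n)$ almost surely (which is essentially what underlies Proposition \ref{mostramaisenGlambda}), hence eventually $\mathcal{X}_n\cap G(t+cD_n)\subset\mathcal{X}_n^+(t)$. The lower bound $|\nabla f|\geq m$ in (A)(3) then yields $d_H(G(t),G(t+cD_n))\leq cD_n/m = O\bigl((\log n/n)^{p/(d+2p)}\bigr)$, uniformly in $t\in[l,u]$; this is where the first term of the maximum enters.

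The second rate is produced by approximating $G(t+cD_n)$ by an $r_n(t)$-convex hull of sample points that fall inside it. The deeper level sets still satisfy $(R_{\lambda}^{r})$ with parameters bounded below by constants depending only on $(m,k)$ from (A), and eventually $r_n(t)$ is smaller than their rolling radius. Since $f\geq t+cD_n\geq l$ on those points, the restriction of $\mathcal{X}_n$ to $G(t+cD_n)$ is a sample from a density bounded below on a set satisfying $(R_{\lambda}^{r})$, so the classical rate for $r$-convex hull reconstructions recalled in Appendix A (from \cite{r2}) gives
\begin{equation*}
d_H\bigl(C_{r_n(t)}(\mathcal{X}_n\cap G(t+cD_n)),\,G(t+cD_n)\bigr)=O\bigl((\log n/n)^{2/(d+1)}\bigr),
\end{equation*}
uniformly in $t\in[l,u]$. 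Combining this with the $O(D_n)$ thickness of the strip $G(t)\setminus G(t+cD_n)$ and with the monotonicity $C_{r_n(t)}(\mathcal{X}_n\cap G(t+cD_n))\subset C_{r_n(t)}(\mathcal{X}_n^+(t))$, the triangle inequality for $d_H$ delivers the claimed rate.

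To pass from Hausdorff distance to distance in measure, I rely on the fact that both $G(t)$ and $C_{r_n(t)}(\mathcal{X}_n^+(t))$ satisfy a rolling condition (eventually) with parameters bounded below uniformly in $t$, so their boundaries have $(d-1)$-dimensional Hausdorff measure uniformly bounded on $[l,u]$; a standard tube argument then gives $d_\mu(A,B)\leq K\,d_H(A,B)$ and the rate transfers. The main obstacle I anticipate is the uniformity in $t$: the constants appearing in the rolling-ball rate depend on the geometry of the target set (rolling radius, boundary measure, lower bound of the density on the set), and it will take some care—using the compactness of $[l,u]$, the continuity of $r_0(\cdot)$, and the Lipschitz control of $\nabla f$ in (A)—to ensure that all these parameters are bounded below by $t$-free constants so that a single rate applies simultaneously to all $t\in[l,u]$.
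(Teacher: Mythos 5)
Your proposal follows essentially the same route as the paper's proof: the inner inclusion from Theorem \ref{co} together with Proposition \ref{mostramaisenGlambda}, an auxiliary deeper level set $G(t+2D_n)$ whose sample points are shown to lie in $\mathcal{X}_n^+(t)$, Walther's reconstruction bound recalled in Appendix A at rate $\left(\log n/n\right)^{2/(d+1)}$, and a triangle inequality combining the $O(D_n)$ width of the strip with the $O(\epsilon_n)$ hull error, followed by the transfer to $d_\mu$. The only point you flag as delicate, uniformity in $t$, is handled in the paper not by a compactness argument on $[l,u]$ but by the class-uniform exponential bound of Proposition A.4 (2) over $\mathcal{G}_{G(l)}(r_\nu)$ together with the deterministic bounds $r_\nu\leq r_n(t)\leq r_0(t)$, valid eventually and uniformly in $t$, supplied by Lemma \ref{debemosescribirlo55} and Lemma \ref{dddd}.
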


According to Theorem \ref{principal}, the new method proposed achieves minimax rates for Hausdorff metric and distance in measure, up to log factors, in most cases where the minimax rates are known, see \cite{mam}. Unlike granulometric smoothing method, when the smoothing parameter is unknown, the convergence rates do not incur a penalty term, see Theorem 3 in \cite{r2}. The rates obtained in Theorem \ref{principal} do no depend on any penalty term although $r_0(t)$ is a priori unknown and it is estimated in a data-driven way from $\mathcal{X}_n$.

Next, the estimation of the level set $L(\tau)$ defined in (\ref{conjuntonivel2}) will be considered. Theorem \ref{principal2} also establishes the previous convergence rates for the estimator $C_{r_n(\hat{f}_\tau)}(\mathcal{X}_n^+(\hat{f}_\tau))$ where $\hat{f}_\tau=\max\{t>0: \mathbb{P}_n(C_{r_n(t)}(\mathcal{X}_n^+(t)))\geq 1-\tau\}
$ and $\mathbb{P}_n$ denotes the empirical probability measure induced by $\mathcal{X}_n$.

\begin{theorem}\label{principal2}Let $L(\tau)$ be a compact, nonempty and nonconvex level set. Under assumptions (A), (D) and (K), let $\mathcal{X}_n$ be a random sample generated from a distribution with density function $f$, let $\mathcal{X}_n^+(t)$ be as introduced in Definition \ref{jejejeje}, let $r_n(t)=\nu \hat{r}_0(t)$ where $\nu\in (0,1)$ is a fixed number and $\hat{r}_0(t)$, defined in (\ref{estimadorr0hat}) and let $\hat{f}_\tau$ be the estimator for the threshold $f_\tau$ established in (\ref{umbral}). If $\underline{\tau}>\overline{\tau}$ are such that $l<f_{\underline{\tau}}$, $f_{\overline{\tau}}<u$ then
	$$ \sup_{\tau\in[\underline{\tau},\overline{\tau}]}(C_{r_n(\hat{f}_\tau)}(\mathcal{X}_n^+(\hat{f}_\tau)),L(\tau))=O\left(\max\left\{ \left(  \frac{\log{n}}{n}\right)^{p/(d+2p)},\left(\frac{\log{n}}{n}\right)^{\frac{2}{d+1}}\right\}\right),$$almost surely.
	The same convergence order holds for $d_\mu(C_{r_n(\hat{f}_\tau)}(\mathcal{X}_n^+(\hat{f}_\tau)),L(\tau))$.
\end{theorem}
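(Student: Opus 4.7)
The plan is to bootstrap Theorem \ref{principal} to the random threshold $\hat f_\tau$. First I would split, via the triangle inequality,
\[
d_H\!\bigl(C_{r_n(\hat f_\tau)}(\mathcal X_n^+(\hat f_\tau)),L(\tau)\bigr)\leq d_H\!\bigl(C_{r_n(\hat f_\tau)}(\mathcal X_n^+(\hat f_\tau)),G(\hat f_\tau)\bigr)+d_H\!\bigl(G(\hat f_\tau),G(f_\tau)\bigr),
\]
with the analogous decomposition for $d_\mu$. Once $\hat f_\tau\in[l,u]$ eventually (which follows from a preliminary consistency of $\hat f_\tau$ together with the assumptions $l<f_{\underline\tau}$ and $f_{\overline\tau}<u$), the first summand is uniformly $O\bigl(\max\{(\log n/n)^{p/(d+2p)},(\log n/n)^{2/(d+1)}\}\bigr)$ by Theorem \ref{principal}. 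Under assumption (A), the co-area formula together with $|\nabla f|\geq m$ on $U$ gives $d_H(G(s),G(t))\leq|s-t|/m$ and $d_\mu(G(s),G(t))=O(|s-t|)$ uniformly on $[l,u]$, so the second summand is $O(|\hat f_\tau-f_\tau|)$. The theorem therefore reduces to bounding $\sup_{\tau\in[\underline\tau,\overline\tau]}|\hat f_\tau-f_\tau|$ at the target rate.

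To bound this supremum, define $\varphi(t)=\mathbb P(G(t))$ and $\hat\varphi(t)=\mathbb P_n(A_n(t))$, with $A_n(t)=C_{r_n(t)}(\mathcal X_n^+(t))$, so that $f_\tau$ and $\hat f_\tau$ are the maximal solutions of $\varphi(t)\geq 1-\tau$ and $\hat\varphi(t)\geq 1-\tau$. By the co-area formula, $\varphi'(t)=-\int_{\{f=t\}}|\nabla f|^{-1}\,dH^{d-1}$, which under (A) and the inherited rolling condition $(R_\lambda^r)$ is bounded and bounded away from zero on $[l,u]$ (the lower bound uses that the reach of $\partial G(t)$ is at least $r_0(t)>0$, forcing a positive surface area). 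The resulting Lipschitz inverse of $\varphi$ yields
\[
\sup_{\tau\in[\underline\tau,\overline\tau]}|\hat f_\tau-f_\tau|\;\leq\;C\,\sup_{t\in[l,u]}|\hat\varphi(t)-\varphi(t)|.
\]
A further split
\[
|\hat\varphi(t)-\varphi(t)|\;\leq\;|\mathbb P_n(A_n(t))-\mathbb P(A_n(t))|+\|f\|_\infty\,d_\mu\bigl(A_n(t),G(t)\bigr)
\]
isolates an empirical-process term and an approximation term, the latter already controlled at the target rate by the $d_\mu$ conclusion of Theorem \ref{principal}.

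The step I expect to be the main obstacle is the uniform empirical-process bound on $\sup_{t\in[l,u]}|\mathbb P_n(A_n(t))-\mathbb P(A_n(t))|$, because the sets $A_n(t)$ form a data-dependent family parametrised by $t$. By Theorem \ref{co}, eventually $r_n(t)=\nu\hat r_0(t)$ lies in a deterministic compact interval $[r_*,R]$ uniformly on $[l,u]$, so every $A_n(t)$ belongs to the class $\mathcal F=\{C_r(F):r\in[r_*,R],\,F\subset\mathbb R^d\text{ finite}\}$. Boundaries of sets in $\mathcal F$ are finite unions of spherical arcs of radius at most $R$, so $\mathcal F$ is of VC-type with polynomial covering numbers and a standard empirical-process inequality gives a uniform deviation of order $(\log n/n)^{1/2}$, which is strictly faster than the target rate. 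Some care is still needed with the joint $t$-dependence of $r_n(t)$ and $\mathcal X_n^+(t)$: I would exploit the monotonicity of $t\mapsto\mathcal X_n^+(t)$, the continuity of $r\mapsto C_r(F)$, and a discretisation of $t$ on a $1/n$-grid with a union bound to interpolate between grid points. A Borel--Cantelli argument on the polynomial tails then upgrades the conclusion to almost-sure convergence at the claimed rate, handling both $d_H$ and $d_\mu$ simultaneously.
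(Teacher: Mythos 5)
Your overall skeleton (reduce to a uniform bound on $|\hat f_\tau-f_\tau|$ via the Lipschitz-invertibility of $\varphi(t)=\mathbb{P}(G(t))$, then feed this back through the level-set rates) is sound and runs parallel to the paper, which follows Walther's Theorem 3 argument to get $\sup_{\tau}|\hat f_\tau-f_\tau|\leq Ma_n$ with $a_n=(4/m)D_n+\epsilon_n$. The genuine gap is in the step you yourself flag as the main obstacle: the uniform empirical-process bound over the data-dependent family $A_n(t)=C_{r_n(t)}(\mathcal{X}_n^+(t))$. The class $\mathcal F=\{C_r(F):r\in[r_*,R],\ F\subset\mathbb{R}^d\ \text{finite}\}$ is \emph{not} of VC type with polynomial covering numbers: the boundary of $C_r(F)$ is a union of spherical caps whose number grows with the cardinality of $F$ (hence with $n$), and in terms of metric entropy this class behaves like the class of $r$-convex (or convex) subsets of a compact, whose covering numbers are of order $\exp\left(C\epsilon^{-(d-1)/2}\right)$. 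For $d>3$ this class is not even Donsker, so the claimed $(\log n/n)^{1/2}$ uniform deviation is unobtainable; the best available rate over such classes is precisely $n^{-2/(d+1)}$ (this is where the second term in the target rate comes from). So the justification for the key step fails, and a discretisation in $t$ plus Borel--Cantelli does not repair it, because the problem is the size of the set class, not the $t$-indexing.

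The fix — and it is what the paper does — is to avoid any empirical process over data-dependent sets altogether. From the proof of Theorem \ref{principal} one has the sandwich (\ref{jip6}): eventually $G(t)\ominus a_nB_1[0]\subset C_{r_n(t)}(\mathcal{X}_n^+(t))\subset G(t)\oplus a_nB_1[0]$ uniformly in $t\in[l,u]$, so by monotonicity $\mathbb{P}_n(A_n(t))$ is squeezed between empirical probabilities of \emph{deterministic} sets that (via Proposition A.3(b)) lie in Walther's class $\mathcal{G}_{G(l)}(m/k)$ of sets with two-sided rolling condition. Proposition 1 in Walther (1997) then gives $\sup_{A\in\mathcal{G}_{G(l)}(m/k)}|\mathbb{P}_n(A)-\mathbb{P}(A)|=O(n^{-2/(d+1)})$ a.s. for $d>3$ (and $O(\log n/\sqrt n)$ for $d\leq3$), which combined with $\mathbb{P}(G(t)\oplus a_nB_1[0]\setminus G(t)\ominus a_nB_1[0])=O(a_n)$ yields (\ref{jip}) and hence $\sup_{\tau\in[\underline{\tau},\overline{\tau}]}|\hat f_\tau-f_\tau|=O(a_n)$; the conclusion then follows exactly along the lines of your first decomposition (the paper phrases it with inclusions, $L(\tau)\ominus((2/m)Ma_n+a_n)B_1[0]\subset C_{r_n(\hat f_\tau)}(\mathcal{X}_n^+(\hat f_\tau))\subset L(\tau)\oplus((2/m)Ma_n+a_n)B_1[0]$, rather than triangle inequalities). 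If you replace your VC argument by this sandwich-plus-Walther step, your proof goes through; as written, the empirical-process claim is incorrect for $d\geq 2$ and the stated rate is false for $d>3$.
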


\section{Data-driven estimation algorithm}\label{4}Most of the times, reconstruction of level sets $G(t)$ introduced in (\ref{conjuntonivel1}) has not interest for practical purposes. Since the practitioner usually unknowns the value of the level $t$, the most common alternative is then to establish the probability content $1-\tau$ of the level set to be estimated. Therefore, estimation of level sets $L(\tau)$ defined in (\ref{conjuntonivel2}) is considered in most of applications.

The first natural step to reconstruct $L(\tau)$ from $\mathcal{X}_n$ should be to determinate an estimator $\hat{f}_\tau$ of the threshold $f_\tau$. For instance, see \cite{hyn}. Then, it would be necessary to establish the sets $\mathcal{X}_n^+(\hat{f}_\tau)$ and $\mathcal{X}_n^-(\hat{f}_\tau)$ in order to calculate $\hat{r}_0(\hat{f}_\tau)$. However, Definition \ref{jejejeje} ensures that $\mathcal{X}_n^+(\hat{f}_\tau)$ and $\mathcal{X}_n^-(\hat{f}_\tau)$ rely on the sequence $D_n$. Therefore, calculating them is not straightforward. To solve this problem, a data-driven proposal for selecting $D_n$ is presented next.\vspace{1.75mm}\\
\underline{Data-driven selection of the sequence $D_n$:}\vspace{1.75mm}\\
Given $\tau$ and $\mathcal{X}_n$, let $p$ be a number verifying that $0<p<\min\{\tau,1-\tau\}$.\vspace{1mm}\\
As first step, it is necessary to calculate the ($\tau \pm p$)-quantiles of $f_n(\mathcal{X}_n)$. They are denoted by $\hat{f}_\tau^+$ and $\hat{f}_\tau^-$, respectively.\vspace{1mm}\\
From them, three subsets of the original sample $\mathcal{X}_n$ are defined:
$$\mathcal{X}_n^+(\hat{f}_\tau^+)=\{X\in\mathcal{X}_n:f_n(X)\geq \hat{f}_\tau^+\},\mbox{ }\mathcal{X}_n^-(\hat{f}_\tau^-)=\{X\in\mathcal{X}_n:f_n(X)< \hat{f}_\tau^-\} \mbox{ and}\vspace{1mm}$$
$$\mathcal{X}_n\setminus (\mathcal{X}_n^+(\hat{f}_\tau^+)\cup \mathcal{X}_n^-(\hat{f}_\tau^-)).$$	
Elements of the set $\mathcal{X}_n\setminus (\mathcal{X}_n^+(\hat{f}_\tau^+)\cup \mathcal{X}_n^-(\hat{f}_\tau^-))$ must be sorted by Euclidean distance to $\mathcal{X}_n^+(\hat{f}_\tau^+)$. The nearest points will be added to $\mathcal{X}_n^+(\hat{f}_\tau^+)$ until the number of points in $\mathcal{X}_n^+(\hat{f}_\tau^+)$ reachs the value $(1-\tau)n$. In this way, it is guaranteed that the proportion of sample points in $\mathcal{X}_n^+(\hat{f}_\tau)$ is at least $1-\tau$. The rest of points in $\mathcal{X}_n\setminus (\mathcal{X}_n^+(\hat{f}_\tau^+)\cup \mathcal{X}_n^-(\hat{f}_\tau^-))$ must be included in $\mathcal{X}_n^-(\hat{f}_\tau^-)$.\vspace{1mm}\\	
Finally, dichotomy algorithms can be used to compute
$$\hat{r}_0(\hat{f}_\tau)=\sup\{\gamma>0:C_\gamma(\mathcal{X}_n^+(\hat{f}_\tau^+))\cap \mathcal{X}_n^-(\hat{f}_\tau^-)=\emptyset\}$$from the already updated sets $\mathcal{X}_n^+(\hat{f}_\tau^+)$ and $\mathcal{X}_n^-(\hat{f}_\tau^-)$. See \cite{r00000} for a similar procedure. The practitioner
must select a maximum number of iterations $I$ and two initial points $r_m$ and $r_M$ with $r_m<r_M$ such that $C_{r_M}(\mathcal{X}_n^+(\hat{f}_\tau^+))\cap \mathcal{X}_n^-(\hat{f}_\tau^-)\neq\emptyset$ and $C_{r_m}(\mathcal{X}_n^+(\hat{f}_\tau^+))\cap \mathcal{X}_n^-(\hat{f}_\tau^-)=\emptyset$, respectively. Choosing a value close
enough to zero is sufficient to select $r_m$. On the other hand, if the convex hull of $\mathcal{X}_n^+(\hat{f}_\tau^+)$ does not meet $\mathcal{X}_n^-(\hat{f}_\tau^-)$ then we propose it as the estimator for the level set.

According to the previous steps, $C_{r_0(\hat{f}_\tau)}(\mathcal{X}_n^+(\hat{f}_\tau^+))$ contains a proportion of at least $1-\tau$ of points in $\mathcal{X}_n$. Of course, the value of $p$ must be chosen carefully taking values close to zero.

\subsection{Leukaemia data analysis}

Spatial clustering of rare diseases has
grown in recent years, in part prompted by increasing concerns over possible links between
disease and sources of environmental pollution.

The data set that will be studied in this work derives from the study in \cite{hen} and it is available in \cite{dig}. It contains 1221 pairs of points in Lancashire and Greater Manchester. Concretely, it
contains the residential coordinates for the 233 cases of diagnosed chronic granulocytic
leukemia registered between 1982 up to 1998 (inclusive), together with 988 controls.
For the selection of controls, population counts in each of the 8131 census enumeration
districts that make up the study-region, stratified by age and sex, were extracted from
the 1991 census. The counts were then used to obtain a stratified random sample of two
controls per case with coordinates given by their corresponding centroid coordinates
(slightly jittered to avoid coincident points).
\begin{figure}[h!]\vspace{-1.35cm}
	
	\hspace{2cm}\includegraphics[height=5.5cm,width=11cm]{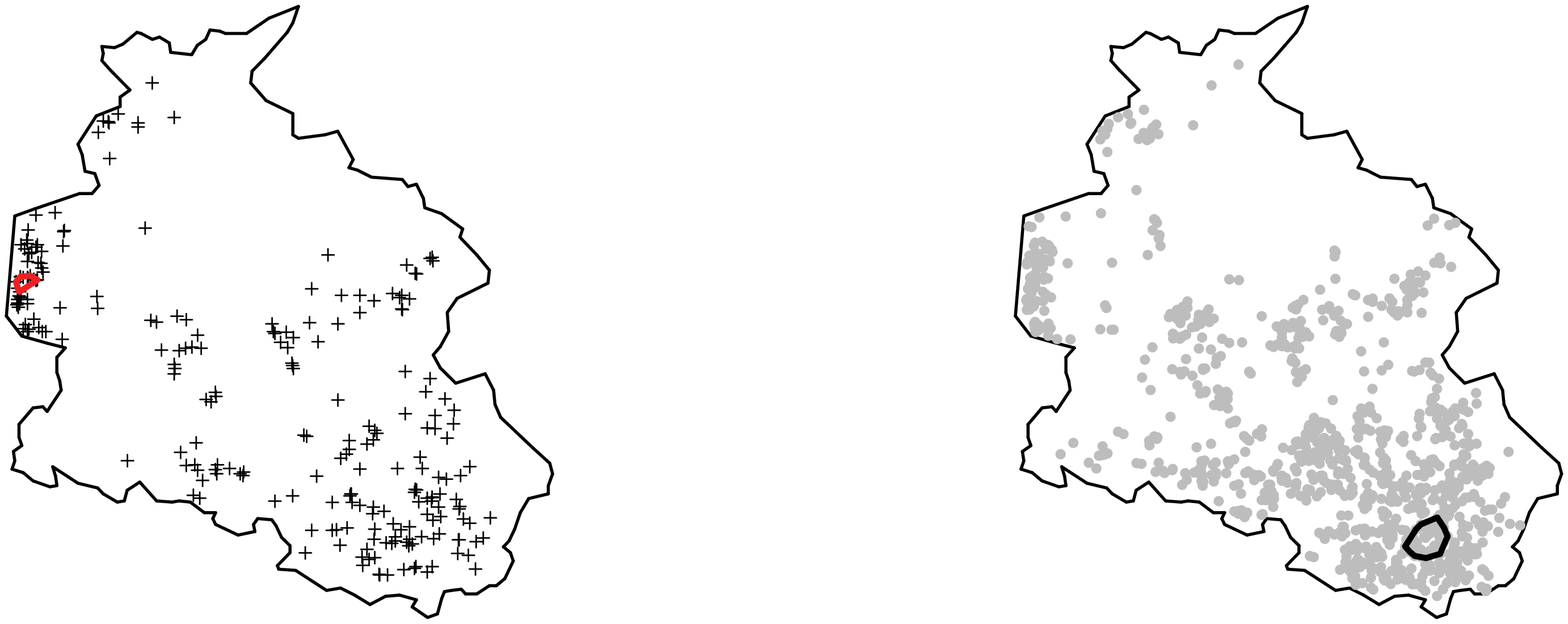}\vspace{-1.8cm}\\
	
	\hspace{2cm}\includegraphics[height=5.5cm,width=11cm]{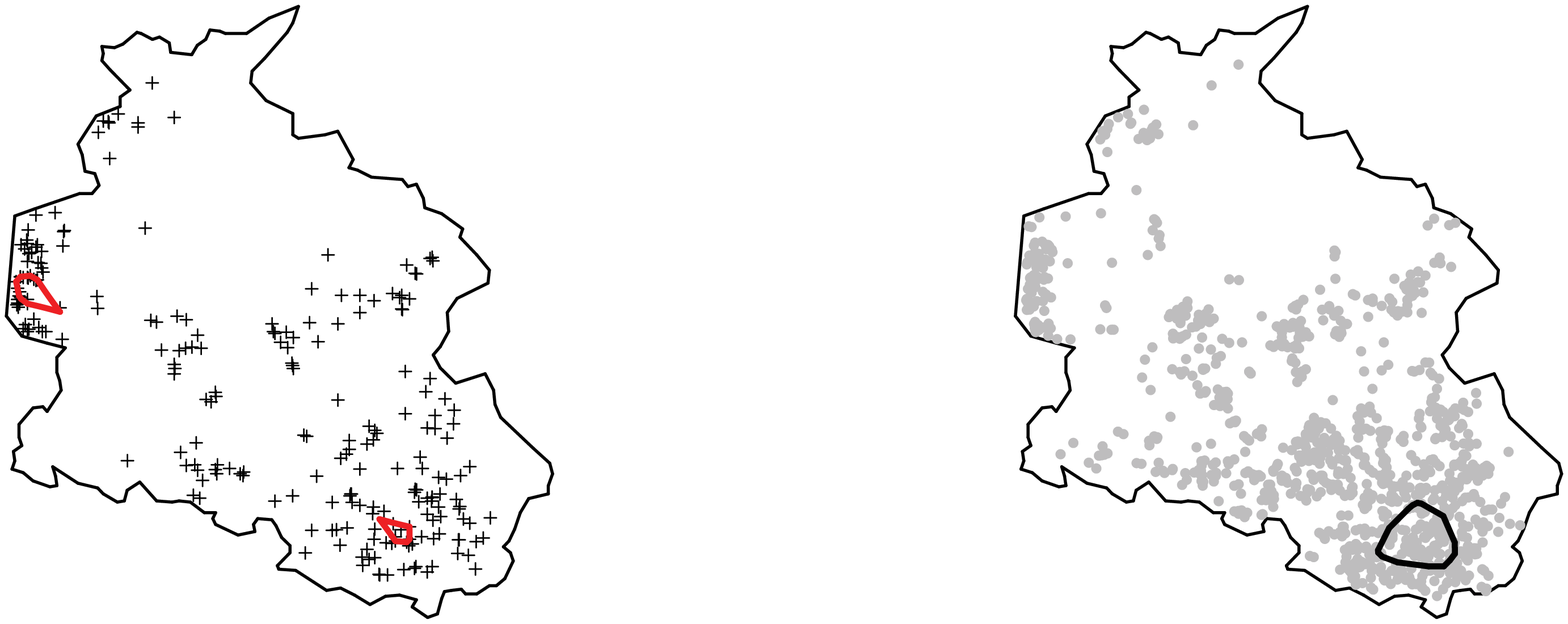}\vspace{-1.8cm}\\
	
	\hspace{2cm}\includegraphics[height=5.5cm,width=11cm]{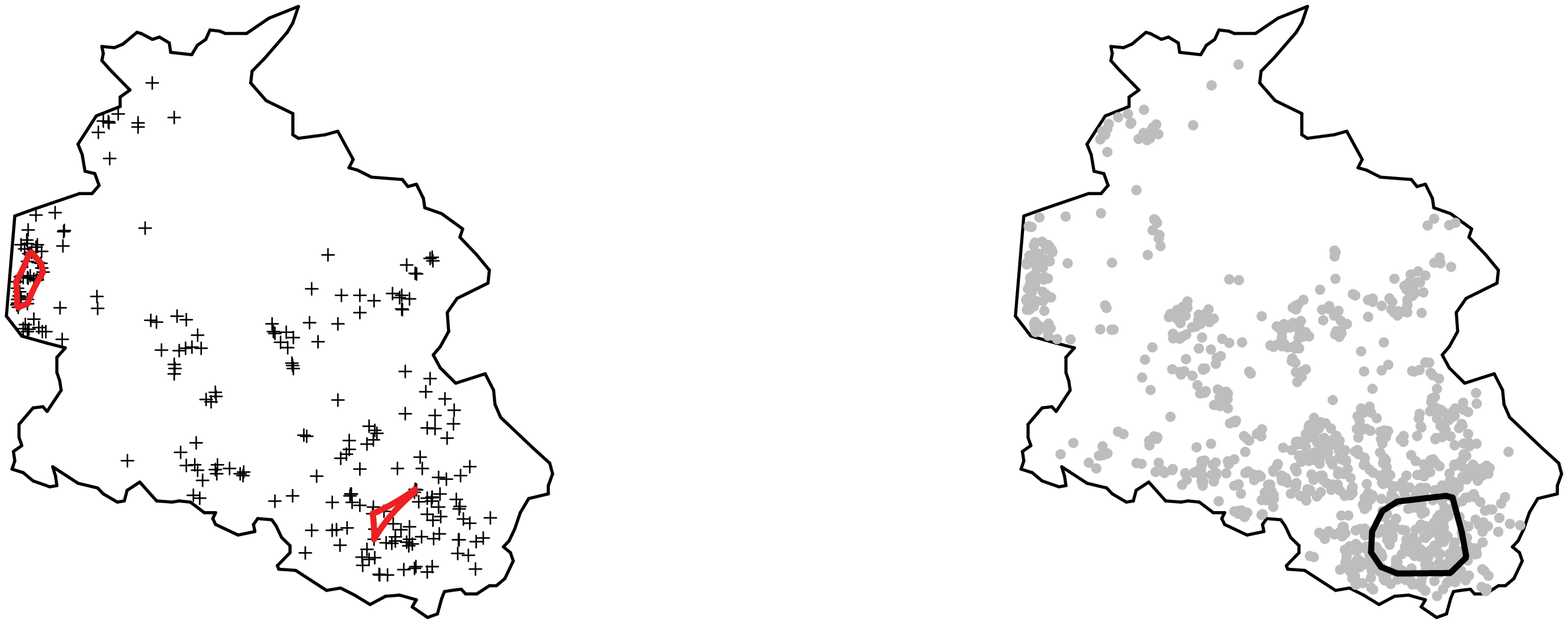}\vspace{-1.8cm}\\
	
	\hspace{2cm}\includegraphics[height=5.5cm,width=11cm]{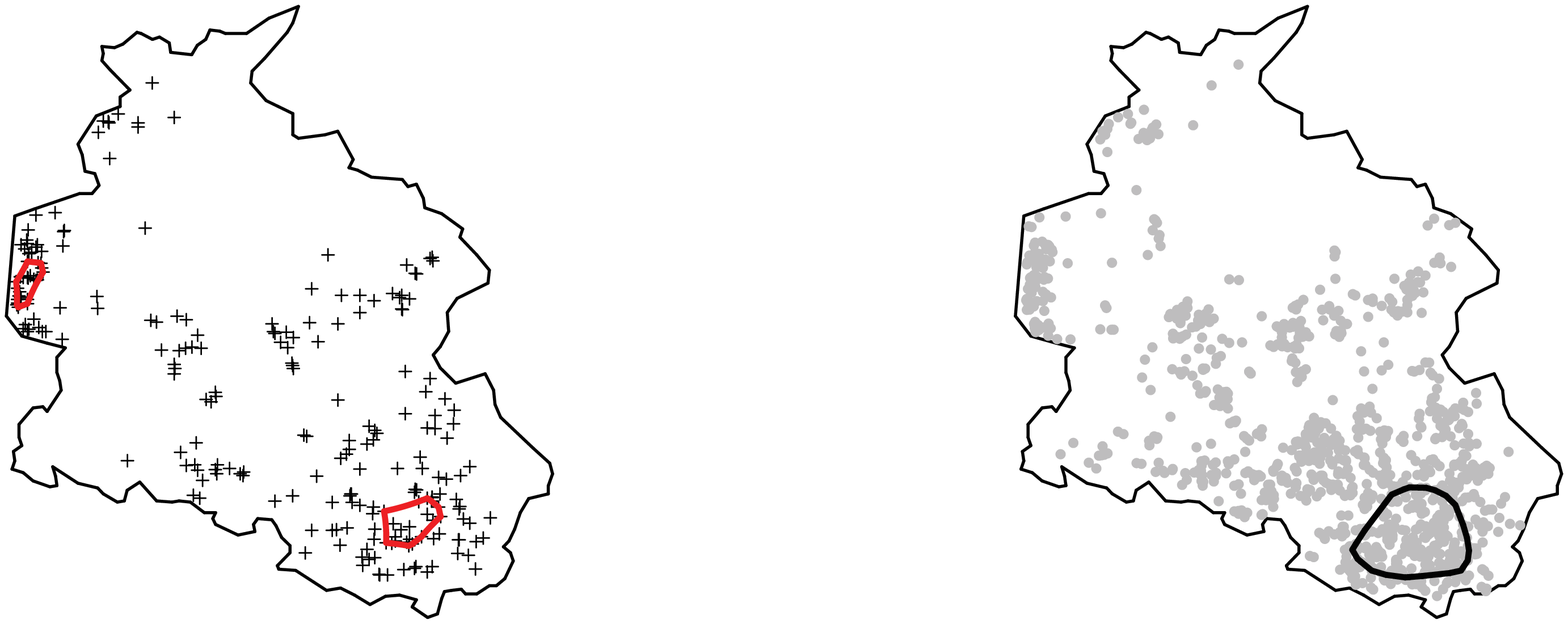}\vspace{-1.25cm}
	\caption{Level sets estimators for the distribution of 322 cases diagnosed of leukaemia (first column) and 988 controls (second column) on the North West of England with $\tau=0.95$ (first row), $\tau=0.9$ (second row), $\tau=0.85$ (third row) and $\tau=0.8$ (fourth row).}\vspace{-.5cm}
	\label{leucemiaest1}
\end{figure}

In Figure \ref{leucemiaest1}, the density level set estimator proposed in this work is shown for the samples of cases and controls fixing $p=0.01$ when $\tau=0.95$ and $\tau=0.9$ and, $p=0.1$ when $\tau=0.85$ and $\tau=0.8$. The multivariate generalization of the plug-in bandwidth selector in \cite{wand2} was used for estimating the corresponding density functions.

It is very interesting problem to examine whether the distribution of this kind of cancer mirrored that of the controls as a whole or whether there was evidence, as implied by concerned
local residents, of clustering. According to the estimations obtained, there exists an excess of case intensity over that of population. Greater Manchester is one of the largest metropolitan areas in the United Kingdom. However, Lancashire is a
non-metropolitan county that emerged during the Industrial Revolution as a major
commercial and industrial region. Therefore, there is evidence of clustering and the
leukaemia cases could be related to environmental and industrial factors.\vspace{-.1cm}

\section{Conclusions and extensions}\label{5}

Under $r-$convexity assumption, a fully data-driven estimator for the shape parameter $r$ is proposed. One advantage of the resulting level set estimator is that its geometric structure is easy to handle. In particular, its boundaries can be explicitly computed.

Furthermore, theoretical results are provided. Concretely, uniform consistency on the level for the estimator of $r$ is established. It is also proved that the obtained level set estimator achieves minimax convergence rates uniformly on the level for Hausdorff metric and Lebesgue measure, up to log factors. These rates do not rely on any penalty term as with other methods where the parameter $r$ is not estimated from data.

Computational aspects of the new estimator were also considered. In particular, a data-driven algorithm for selecting the sequence $D_n$ is provided. In addition, we finish this work showing a real application. We have checked that there can exists clustering evidence for leukaemia.

Finally, natural extensions of this work will be discussed. Although a solution for choosing the sequence $D_n$ was proposed, more sophisticated alternatives could be considered in future. Another important achievement would be to propose a nonparametric test for comparing two or more populations in general dimension. The test statistic could measure the discrepancy (for example, boundary distances) among the level set estimators of these populations. This test procedure could use explicitly the distance between boundaries of the estimated level sets. The simple geometric structure of estimators could be used to compute the procedure and calibrate the test using re-sampling schemes.

\section{Proofs}\label{6}$$\vspace{-1cm}$$

\textbf{Proof of Proposition \ref{mostramaisenGlambda}:}\vspace{-.23cm}\\

First, we will prove that,
$$\mathbb{P}(\mathcal{X}_n^+(t)\subset G(t),\mbox{ }\forall t \in[l,u],\mbox{ eventually})=1.$$For this, it is enough to show
\begin{equation}\label{jo}
	\mathbb{P}\left(\sup_{z\in G(t)^c} f_n(z)<t+M\left(\frac{\log{n}}{n}\right)^{p/(d+2p)},\mbox{ }\forall t \in[l,u],\mbox{ eventually}\right)=1.
\end{equation}Then, let $z\in G(t)^c$ and $t\in[l,u]$. Two cases are considered: $z\in C$ or $z\in C^c$, see Proposition \textcolor{blue}{A.1} in Appendix A for details about the compact set $C$.
\begin{enumerate}
	\item Let $z\in C^c$. Since $z\in G(t)^c$ then $z\notin G(l)$ because $G(l)\setminus \interior(G(u))\subset C$. Therefore, according to Proposition \textcolor{blue}{A.2} in Appendix A, with probability one and for $n$ large enough,
	$$\sup_{z\in G(t)^c\cap C^c}f_n(z)\leq \sup_{y\in G(l)^c\cap C^c}f_n(y)<l-\frac{w}{2}<l,$$where $w$ denotes a positive constant. Therefore,
	$$\mathbb{P}\left(\sup_{z\in G(t)^c\cap C^c}f_n(z)<l,\mbox{ }\forall t \in[l,u],\mbox{ eventually}\right)=1,$$and since $l<t+D_n$ for all $t\in [l,u]$,
	$$\mathbb{P}\left(\sup_{z\in G(t)^c\cap C^c}f_n(z)<t+D_n,\mbox{ }\forall t \in[l,u],\mbox{ eventually}\right)=1.$$
	\item According to Proposition \textcolor{blue}{A.1} in Appendix A, there exists $N>0$ such that
	\begin{equation}\label{hhhhhhhhhhhhhhhhhhhhhhhh}
		\sup_{C}|f_n-f|\leq N  \left(  \frac{\log{n}}{n}\right)^{p/(d+2p)},\mbox{ almost surely}.
	\end{equation}Let $z\in C$. Since $z\notin G(t)$ then $f(z)<t$. Taking into account (\ref{hhhhhhhhhhhhhhhhhhhhhhhh}), with probability one and for large $n$, it is verified that
	$$f_n(z)\leq|f_n(z)-f(z)|+|f(z)|<\sup_C|f_n-f|+t.$$Hence, for all $t\in[l,u]$,
	$$\sup_{z\in G(t)^c\cap C}f_n(z)\leq \sup_C |f_n-f|+t.$$If $M\geq N$,
	$$f_n(z) < t+M\left(  \frac{\log{n}}{n}\right)^{p/(d+2p)}=t +D_n,\mbox{ almost surely}.$$This concludes the proof of (\ref{jo}).
\end{enumerate}In a similar way, it can be proved that
\begin{equation*}
	\mathbb{P}(\mathcal{X}_n^-(t)\subset G(t)^c,\mbox{ }\forall t \in[l,u],\mbox{ eventually})=1.\qedhere
\end{equation*}

%


$\vspace{-.24cm} $

\textbf{Proof of Proposition \ref{alberto5}:}\vspace{-.23cm}\\

Let $\epsilon>0$. It is clear that it is enough to show the result for $\epsilon$ small enough. Nexts steps complete the proof:

\begin{enumerate}
	\item[Step 1.] Let $x\in G(t)$. Under (A), a ball of radius $m/k$ rolls freely in $G(t)$ and $\overline{G(t)^c}$ for all value of the threshold $t\in[l,u]$. According to Lemma 1 in \cite{r1}, if $\epsilon\leq m/k$,
	$$\exists B_{\frac{\epsilon}{2}}(y)\subset B_{\epsilon}(x)  \mbox{ such that } B_{\frac{\epsilon}{2}}(y)\subset G(t).$$Define $B_t^x=B_{\epsilon/4}(y)$. Obviously, $B_t^x\subset G(t) $. In addition, it verifies that
	$$B_t^x\subset G(t)\ominus \frac{\epsilon}{4}B_1[0]\subset G(l)\ominus (\epsilon/4)B_1[0],\mbox{ }\forall t\in [l,u]\mbox{ and }\forall x\in G(t)$$since, for all $z\in B_t^x$, $z+(\epsilon/4)B_1[0]\subset B_{\epsilon/2}(y)\subset G(t)$. On the other hand and considering Proposition \textcolor{blue}{A.3} (b) in Appendix A, for $\epsilon$ small enough and $T=\epsilon m/8$,
	$$G(t)\ominus \frac{\epsilon}{4}B_1[0] \subset G(t+T).$$Therefore,
	$$B_t^x\subset G(t)\ominus \frac{\epsilon}{4}B_1[0]\subset G(t+T).\vspace{-.2cm}$$			
	\item[Step 2.] Let $\mathcal{F}=\{B_t^x:t\in[l,u],\mbox{ }x\in G(t)\}$. According to the previous comments, if $T>0$ is small enough then $$B_t^x\subset G(t+T)\subset G(l), \mbox{ }\forall t\in[l,u],\mbox{ }\forall x\in G(t).$$On the other hand, under (A), the level set $G(l)$ is bounded since $G(u+\zeta)$ is bounded and $\overline{G(l-\zeta)}\setminus \interior (G(u+\zeta))\subset U$ where $U$ is a bounded set too. As consequence, $\overline{G(l-\zeta)}$ is bounded and, therefore, $G(l)\subset\overline{G(l-\zeta)}$ too. Then, $G(l)\ominus (\epsilon/4)B_1[0]$ is also bounded and compact. Therefore, there exists a finite cover for $G(l)\ominus (\epsilon/4)B_1[0]$ of balls of radius, for instance, $\epsilon/10$. Therefore, there exist $z_1,\cdots,z_s\in G(l)\ominus (\epsilon/4)B_1[0]$ such that
	$$G(l)\ominus (\epsilon/4)B_1[0]\subset \bigcup_{i=1}^s B_{\frac{\epsilon}{10}}(z_i).$$Then, for all $B_t^x=B_{\epsilon/4}(y)\in \mathcal{F}$ where $y\in  G(l)\ominus (\epsilon/4)B_1[0]$,
	$$\exists z_j\in \{z_1,\cdots,z_s\}\mbox{ such that }\|z_j-y\|<\frac{\epsilon}{10}.$$Next, we will prove that the ball $B_{\epsilon/10}(z_j)\subset B_t^x$. Let $z\in B_{\epsilon/10}(z_j)$, it is satisfied that
	$$\|z-y\|\leq\| z-z_j\|+\|z_j-y\| < \frac{\epsilon}{10}+ \frac{\epsilon}{10}=\frac{\epsilon}{5}<\frac{\epsilon}{4}.$$
	As consequence, if a ball in $\mathcal{F}$ does not meet $\mathcal{X}_n$ then there exist a ball $B_{\epsilon/10}(z_i)$ such that $B_{\epsilon/10}(z_i)\cap \mathcal{X}_n=\emptyset$. Therefore, we can write
	\begin{equation}\label{hu245}\mathbb{P}(\exists t\in[l,u],\mbox{ }x\in G(t):\mathcal{X}_n\cap B_t^x=\emptyset)\leq \sum_{i=1}^s \mathbb{P}\left(\mathcal{X}_n\cap B_{\frac{\epsilon}{10}}(z_i)=\emptyset\right).\end{equation}
	In addition, since $z_i\in G(l)\ominus (\epsilon/4)B_1[0]$ for all $i \in \{1,...,s\}$ it is satisfied that $B_{\epsilon/10}(z_i)\subset G(l)$ for all $i\in\{1,...,s\}$. Then, we can assume that
	\begin{equation}\label{hu}
		f(z)>l-\zeta\mbox{ for all }z\in  B_{\frac{\epsilon}{10}}(z_i),\mbox{ }i=1,\cdots,s.
	\end{equation}
	\begin{figure}[h!]\vspace{-2.1cm}
		\begin{pspicture}(-1.6,0)(14,4.5)
		\scalebox{.6}{\psccurve[showpoints=false,fillstyle=solid,fillcolor=white,linecolor=black,linewidth=0.2mm,linearc=3](5,1)(5,4)(8,3.5)(10,4.5)(10,1)
			\rput(0.87,.3){\scalebox{0.9}[0.85]{\psccurve[showpoints=false,fillstyle=solid,fillcolor=white,linecolor=gray,linewidth=0.2mm,linearc=3](5,1)(5,4)(8,3.5)(10,4.5)(10,1)}}
			\pscircle[ linecolor=black,linewidth=0.2mm,linearc=3,linestyle=dashed,dash=3pt 2pt](9.7,4.3){.9}
			\psdots*[dotsize=2.5pt](9.7,4.3)
			\pscircle[linearc=0.25,linecolor=gray,linewidth=0.2mm, linestyle=dashed,dash=3pt 2pt,fillstyle=crosshatch*,fillcolor=gray,hatchcolor=white,hatchwidth=1.pt,hatchsep=.4pt,hatchangle=0](9.7,3.8){.25}
			\psdots[dotsize=2.pt,linecolor=gray](9.7,3.8)}
		\rput(7,2.7){$\tiny{B_\epsilon(x)}$}
		\rput(2.2,1.05){$G(t)$}
		\rput(3.8,1.5){\textcolor[rgb]{0.52,0.52,0.52}{$G(t+T)$}}
		\end{pspicture}\vspace{-.3cm}
		\caption{Elements in proof of Proposition \ref{alberto5}. $G(t)$ in black, $G(t+T)$ in gray, $B_\epsilon(x)$ in black and $B_t^x$ in gray.}\vspace{-.3cm}\label{k}
	\end{figure}
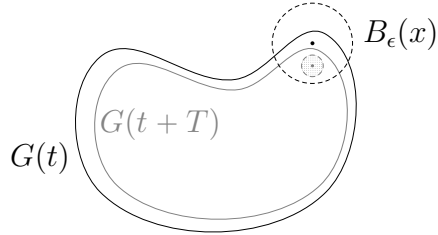
	\item[Step 3.] Given this, Borel-Cantelli's Lemma shows
	$$\mathbb{P}(\exists t\in[l,u],\mbox{ }x\in G(t):\mathcal{X}_n\cap B_t^x=\emptyset,\mbox{ infinitely often})=0.$$Using the same reasoning as in the Step 2, it is enough to analyze if $$\mathbb{P}\left(\mathcal{X}_n\cap B_{\frac{\epsilon}{10}}(z_i)=\emptyset,\mbox{ infinitely often}\right)=0.$$Therefore, we must to prove that $$\sum_{n=1}^\infty \mathbb{P}\left(\mathcal{X}_n\cap B_{\frac{\epsilon}{10}}(z_i)=\emptyset\right)<\infty.$$It is easy to check that
	$$\mathbb{P}\left(\mathcal{X}_n\cap B_{\frac{\epsilon}{10}}(z_i)=\emptyset\right)\leq  e^{-n\mathbb{P}\left(X_1\in B_{\frac{\epsilon}{10}}(z_i)\right)}$$and, using (\ref{hu}), we can ensure that $\mathbb{P}\left(X_1\in B_{\frac{\epsilon}{10}}(z_i)\right)>0$. Then,
	$$\sum_{n=1}^\infty \mathbb{P}\left(\mathcal{X}_n\cap B_{\frac{\epsilon}{10}}(z_i)=\emptyset\right)\leq \sum_{n=1}^\infty e^{-n \mathbb{P}\left(X_1\in B_{\frac{\epsilon}{10}}(z_i)\right)}<\infty.$$
	\item[Step 4.] According to Step 3, with probability one, there exists $n_0$ such that for all $t\in[l,u]$ and for all $x\in G(t)$,
	$$ \mathcal{X}_n\cap B_{t}^x\neq \emptyset,\mbox{ }\forall n\geq n_0.$$Then, there exists $n_0$ such that for all $t\in[l,u]$ and for all $x\in G(t)$,
	$$ \exists x_i\in \mathcal{X}_n\cap B_{t}^x \subset \mathcal{X}_n\cap B_\epsilon(x),\mbox{ }\forall n\geq n_0.$$Therefore, it only remains to prove that $x_i\in \mathcal{X}_n^+(t)$.
	
	Two cases are considered: $x_i\in C$ and $x_i\notin C$.
	\begin{enumerate}
		\item Let $x_i\in C$.  According to Proposition \textcolor{blue}{A.1} in Appendix A, there exists $N>0$ such that, with probability one,
		$$
		\sup_{C}|f_n-f|\leq N  \left(  \frac{\log{n}}{n}\right)^{p/(d+2p)} .
		$$If $D_n= M\left(  \frac{\log{n}}{n}\right)^{p/(d+2p)}$ with $M\geq N$ then $\lim_{n\rightarrow \infty}D_n=0$. So, fixed $T/2>0$ (see step 1 in this proof),
		$$\exists n_1\in\mathbb{N}\mbox{ such that } D_n<T/2, \forall n\geq n_1.$$Then,
		$$|f_n(x_i)-f(x_i)|\leq \sup_{C}|f_n-f|\leq D_n<T/2,\mbox{ }\forall n\geq\max\{n_0,n_1\}.$$Therefore, since $x_i\in B_t^x\subset G(t+T)$,
		$$f_n(x_i)\geq f(x_i)-D_n\geq t+T-D_n>t+T-\frac{T}{2}=t+\frac{T}{2}\geq t+D_n.$$

		\item If $x_i\notin C$ then, since $f(x_i)\geq t+T>t\geq l$ and $G(l)\setminus \interior(G(u))\subset C $, we have $x_i\in \interior(G(u))\subset G(u)$. Then,
		$x_i\in G(u)\cap C^c$. According to Proposition \textcolor{blue}{A.2} in Appendix A for a certain $w>0$, with probability one,
		$$\exists n_2 \mbox{ such that  }f_n(z)\geq u +\frac{w}{2},\mbox{ }\forall z\in G(u)\cap C^c\mbox{ and }\forall n\geq n_2.$$For $D_n$ fixed previously, $\lim_{n\rightarrow \infty}D_n=0$. So, given $w/2>0$,$$\exists n_3\in\mathbb{N}\mbox{ such that } D_n<w/2, \forall n\geq n_3.$$Therefore, since $t\leq u$,
		\[f_n(x_i)\geq u+\frac{w}{2}\geq t + D_n,  \mbox{ }\forall n\geq\max\{n_0,n_2,n_3\}.\qedhere\]
	\end{enumerate}
\end{enumerate}

$\vspace{-.24cm} $

\textbf{Proof of Corollary \ref{xeneralizacionsoporte}:}\vspace{-.23cm}\\

The proof is a straightforward consequence of Propositions \ref{mostramaisenGlambda} and \ref{alberto5}.\qedhere

$\vspace{-.24cm} $

\textbf{Proof of Theorem \ref{co}:}\vspace{-.23cm}\\

Previously, it is necessary to prove some preliminary results. First, we will prove that the estimator $\hat{r}_0(t)$ is greater than the real value $r_0(t)$ for a fixed value of $t$, with probability one and $n$ large enough. The behaviour of the right-sided limit of $r_0(t)$ is analyzed in Proposition \ref{continuidadlambad}.

\begin{proposition}\label{roooo}Let $G(t)$ be a compact, nonempty and nonconvex level set. Under assumptions (A), (D) and (K), let $r_0(t)$ and $\hat{r}_0(t)$ be as established in Definitions \ref{r_0_t} and \ref{jejejeje}, respectively. Then,
	$$\mathbb{P}(\hat{r}_0(t)\geq r_0(t),\mbox{ }\forall t\in [l,u],\mbox{ eventually})=1.$$		
\end{proposition}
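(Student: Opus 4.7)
\textbf{Proof proposal for Proposition \ref{roooo}.} The plan is to reduce the claim to the monotonicity of the $r$-convex hull operator together with the containment relations $\mathcal{X}_n^+(t)\subset G(t)$ and $\mathcal{X}_n^-(t)\subset G(t)^c$, which hold uniformly in $t$ by Proposition \ref{mostramaisenGlambda}.

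First I would recall that for any two sets $A\subset B$ and any $\gamma>0$ the inclusion $C_\gamma(A)\subset C_\gamma(B)$ holds: every open ball $B_\gamma(x)$ that misses $B$ also misses $A$, so the intersection defining $C_\gamma(A)$ is taken over a larger family of complements and therefore produces a smaller set. I would also note that for every $\gamma\in(0,r_0(t))$, the level set $G(t)$ is $\gamma$-convex by definition of $r_0(t)$, i.e.\ $C_\gamma(G(t))=G(t)$.

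Next I would work on the almost sure event provided by Proposition \ref{mostramaisenGlambda}, on which there exists $n_0$ (depending on $\omega$) such that for every $n\geq n_0$ and every $t\in[l,u]$ one has $\mathcal{X}_n^+(t)\subset G(t)$ and $\mathcal{X}_n^-(t)\subset G(t)^c$. Fix such $n$ and $t$, and pick any $\gamma\in(0,r_0(t))$. Monotonicity and $\gamma$-convexity of $G(t)$ give
\[
C_\gamma(\mathcal{X}_n^+(t))\subset C_\gamma(G(t))=G(t),
\]
so that
\[
C_\gamma(\mathcal{X}_n^+(t))\cap \mathcal{X}_n^-(t)\subset G(t)\cap G(t)^c=\emptyset.
\]
By the definition of $\hat{r}_0(t)$ in (\ref{estimadorr0hat}), this forces $\hat{r}_0(t)\geq\gamma$. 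Since $\gamma<r_0(t)$ was arbitrary, letting $\gamma\uparrow r_0(t)$ yields $\hat{r}_0(t)\geq r_0(t)$, uniformly in $t\in[l,u]$ on the same almost sure event.

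The only delicate point is ensuring that the argument is genuinely uniform in $t$: the index $n_0$ must not depend on $t$. This is precisely the content of Proposition \ref{mostramaisenGlambda}, whose ``$\forall t\in[l,u]$, eventually'' formulation absorbs the uniformity, so no further work is needed. One should also check that $\mathcal{X}_n^+(t)$ is nonempty so that $C_\gamma(\mathcal{X}_n^+(t))$ is well defined, but this is guaranteed eventually and uniformly in $t$ by Proposition \ref{alberto5}. Together these give the desired conclusion with probability one.
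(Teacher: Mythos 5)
Your proposal is correct and follows essentially the same route as the paper: invoke Propositions \ref{mostramaisenGlambda} and \ref{alberto5} for the uniform-in-$t$ containments $\mathcal{X}_n^+(t)\subset G(t)$, $\mathcal{X}_n^-(t)\subset G(t)^c$ and nonemptiness, then use monotonicity of the $\gamma$-convex hull together with $\gamma$-convexity of $G(t)$ to conclude $C_\gamma(\mathcal{X}_n^+(t))\cap\mathcal{X}_n^-(t)=\emptyset$ and hence $\hat{r}_0(t)\geq r_0(t)$. The only cosmetic difference is that you work with $\gamma<r_0(t)$ and let $\gamma\uparrow r_0(t)$, whereas the paper applies the argument directly at $\gamma=r_0(t)$ using that the supremum in Definition \ref{r_0_t} is attained; your variant even avoids relying on that attainment.
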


\begin{proof}Proposition \ref{alberto5} guarantees that, with probability one,
	$$\exists n_1\in\mathbb{N}\mbox{ such that } \mathcal{X}_n^+(t)\neq\emptyset, \mbox{ }\forall t\in[l,u], \mbox{ }\forall n\geq n_1.$$In addition, Proposition \ref{mostramaisenGlambda} guatantees that, with probability one,
	$$\exists n_2\in\mathbb{N}\mbox{ such that }\mathcal{X}_n^+(t)\subset G(t) \mbox{ and }\mathcal{X}_n^-(t)\subset G(t)^c, \mbox{ }\forall t\in[l,u],\mbox{ }\forall n\geq n_2.$$Since $G(t)$ is $r_0(t)-$convex it is verified that $\mathcal{X}_n^+(t)\subset  C_{r_0(t)}(G(t))=G(t)$ and $\mathcal{X}_n^-(t)\subset G(t)^c$, with probability one, for $n$ large enough. Hence, $\mathcal{X}_n^+(t)\cap \mathcal{X}_n^-(t)=\emptyset$ and, with probability one,
	\[r_0(t)\leq \sup\{\gamma>0: C_\gamma(\mathcal{X}_n^+(t))\cap \mathcal{X}_n^-(t)=\emptyset\}=\hat{r}_0(t),$$
	$$\mbox{ }\forall t\in [l,u],\mbox{ }\forall n\geq n_0=\max\{ n_1,n_2\}.\qedhere\]
\end{proof}

\begin{proposition}\label{continuidadlambad}Let $G(t)$ be a compact, nonempty and nonconvex level set and let $r_0(t)$ be as established in Definition \ref{r_0_t}. Under (A), given $\epsilon>0$, there exists $\delta>0$ such that for all $t,\mbox{ }\overline{t}\in [l,u]$ verifying $0<\overline{t}-t<\delta$ it is satisfied that $r_0(\overline{t})\leq r_0(t)+\epsilon$.
\end{proposition}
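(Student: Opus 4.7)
I will argue by contradiction. Suppose the conclusion fails; then there exist $\epsilon>0$ and sequences $t_n,\overline{t}_n\in[l,u]$ with $0<\overline{t}_n-t_n\to 0$ but $r_0(\overline{t}_n)>r_0(t_n)+\epsilon$. Set $\gamma_n=r_0(t_n)+\epsilon/2$. Since $\gamma_n<r_0(\overline{t}_n)$, $G(\overline{t}_n)$ is $\gamma_n$-convex, while $\gamma_n>r_0(t_n)$ forces $G(t_n)$ to fail $\gamma_n$-convexity. It is enough to rule out this failure for $n$ large.

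The quantitative input is Proposition A.3(b) in Appendix A: under (A), $G(t)\ominus rB_1[0]\subset G(t+T)$ whenever $T$ is of order $rm$. Choosing $T=\overline{t}_n-t_n$ furnishes $w_n=O((\overline{t}_n-t_n)/m)\to 0$ with $G(t_n)\ominus w_nB_1[0]\subset G(\overline{t}_n)$. Combined with $|\nabla f|\ge m$ from (A), this yields the shell bound $d(z,G(\overline{t}_n))\le w_n$ for every $z\in G(t_n)\setminus G(\overline{t}_n)$, so the shell lies inside a $w_n$-tube of $G(\overline{t}_n)$.

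To derive the contradiction I would show, pointwise, that $G(t_n)$ is $(\gamma_n-w_n)$-convex. Fix $y\notin G(t_n)$. Since $y\notin G(\overline{t}_n)$, $\gamma_n$-convexity of $G(\overline{t}_n)$ supplies a ball $B_{\gamma_n}(x)$ with $y\in B_{\gamma_n}(x)$ and $B_{\gamma_n}(x)\cap G(\overline{t}_n)=\emptyset$; its only possible intersection with $G(t_n)$ is through the shell. Let $p\in G(\overline{t}_n)$ be a nearest point to $x$ and translate $x$ by $w_n$ along $(x-p)/\|x-p\|$; the shifted center $x'$ satisfies $d(x',G(\overline{t}_n))\ge \gamma_n+w_n$, so by the shell bound $B_{\gamma_n}(x')$ is disjoint from the whole shell and thus from $G(t_n)$. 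Shrinking the radius by $w_n$ keeps $y$ inside, giving $y\notin C_{\gamma_n-w_n}(G(t_n))$. Since $y$ is arbitrary, $r_0(t_n)\ge\gamma_n-w_n=r_0(t_n)+\epsilon/2-w_n$, which is absurd once $w_n<\epsilon/4$.

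The main obstacle is preserving $y$ strictly inside the ball after the shift-plus-shrink. In the regime $\|y-x\|\le\gamma_n-w_n$ this is automatic, but when $\|y-x\|\in[\gamma_n-w_n,\gamma_n)$ the point $y$ sits too near the rim of $B_{\gamma_n}(x)$. In that boundary regime I would first replace $B_{\gamma_n}(x)$ by a better witness: the outer rolling ball of radius $m/k$ at the point of $\partial G(\overline{t}_n)$ nearest to $y$ (ensured by $(R_{\lambda}^r)$, which is implied by (A)), composed with $B_{\gamma_n}(x)$, produces a second $\gamma_n$-ball disjoint from $G(\overline{t}_n)$ and containing $y$ with $\|y-x\|\le\gamma_n-c$ for a constant $c>0$ depending only on $m$, $k$, and $\epsilon$. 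Once $w_n<c$, the generic shift applies and the contradiction follows uniformly in $t_n\in[l,u]$.
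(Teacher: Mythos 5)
Your set-up (negation via sequences, the shell bound from Proposition A.3(b)) is fine, but the two geometric steps that carry the argument do not hold. First, the shift step: if $p$ is the point of $G(\overline{t}_n)$ nearest to $x$ and $x'=x+w_n(x-p)/\|x-p\|$, you only get $\|x'-p\|\geq\gamma_n+w_n$; the distance to the \emph{set} satisfies merely $d(x',G(\overline{t}_n))\geq d(x,G(\overline{t}_n))-w_n=\gamma_n-w_n$, because for a nonconvex $G(\overline{t}_n)$ (the only case at issue) other parts of the set can lie in the direction of the shift --- picture $x$ inside a cavity of an annulus-shaped level set, where moving away from the nearest boundary point moves you toward the opposite side. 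So $B_{\gamma_n}(x')$ need not miss the shell, and the claimed disjointness from $G(t_n)$ fails. Second, the repair for the rim regime cannot work as stated: if $y\notin G(t_n)$ lies at distance $\eta$ from $G(\overline{t}_n)$, then \emph{any} ball disjoint from $G(\overline{t}_n)$ and containing $y$ has $y$ within $\eta$ of its boundary (the nearest point of $G(\overline{t}_n)$ is excluded from the ball), and since $|\nabla f|$ is bounded above on $U$, points outside $G(t_n)$ with $\eta=O(\overline{t}_n-t_n)\to0$ do occur; hence no witness ball containing $y$ at a uniform depth $c>0$ depending only on $m,k,\epsilon$ exists. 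More structurally, the implication you are after --- $G(\overline{t}_n)$ is $\gamma_n$-convex and $G(\overline{t}_n)\subset G(t_n)\subset G(\overline{t}_n)\oplus w_nB_1[0]$ imply $G(t_n)$ is $(\gamma_n-w_n)$-convex --- is false on the strength of those two hypotheses alone (take a disk together with a nearby ball of radius comparable to $w_n$ inside the tube), so any correct version must use assumption (A) quantitatively, which the sketch does not actually do.

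The paper avoids this difficulty by propagating a witness of \emph{non}-convexity in the opposite direction, which is the easy direction for the nested sets. For $r^*>r_0(t)$, Proposition B.1 produces a ball $A_t=B_{\rho_t}(c_t)\subset C_{r^*}(G(t))$ with $d(A_t,G(t))\geq 3\rho_t$, and Proposition B.4 shows that if $0<\overline{t}-t\leq\min\{\epsilon m/2,mc/2\}$ then $B_{\rho_t/2}(c_t)\subset C_{r^*+\epsilon}(G(\overline{t}))$; since $A_t\cap G(\overline{t})=\emptyset$, this gives $G(\overline{t})\subsetneq C_{r^*+\epsilon}(G(\overline{t}))$, hence $r_0(\overline{t})\leq r^*+\epsilon$, and letting $r^*\downarrow r_0(t)$ finishes. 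Note why that direction is robust: there one checks that every ball of the \emph{enlarged} radius $r^*+\epsilon$ through a point of the witness still meets $G(\overline{t})$, so the Hausdorff error $d_H(G(t),G(\overline{t}))\leq\frac{2}{m}(\overline{t}-t)\leq\epsilon$ is absorbed by enlarging the radius; your route instead has to \emph{shrink} the radius while keeping an arbitrary exterior point strictly inside the ball, and that is exactly where it breaks. If you want to salvage your scheme, you would essentially have to prove a statement of the strength of Proposition B.4 anyway, so the contradiction/compactness scaffolding buys nothing over the paper's direct argument.
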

\begin{proof}Given $\epsilon>0$, let $t\in[l,u]$ and let $r^*>r_0(t)$. Then, it is defined $r=r^*+\epsilon$.
	
	According to Proposition \textcolor{blue}{B.1} in Appendix B for $r^*$, there exists an open ball $A_t=B_{\rho_t}(c_t)$ such that $A_t\cap G(t)=\emptyset$ and $A_t\subset C_{r^*}(G(t))$.

	Let $\delta=\min\{ \epsilon m/2, mc/2 \}$ be a number that does not rely on $t$. If $0<\overline{t}-t\leq \delta$, Proposition \textcolor{blue}{B.4} in Appendix B guarantees that $B_{\rho_t/2}(c_t)\subset C_{r^*+\epsilon}(G(\overline{t}))=C_{r}(G(\overline{t}))$.
	
	Since $A_t\cap G(t)=\emptyset$ and $\overline{t}>t$, it is satisfied that $A_t\cap G(\overline{t})=\emptyset$. Then, $G(\overline{t})\subsetneq C_r(G(\overline{t}))$. Therefore, $r_0(\overline{t})\leq r= r^*+\epsilon$ for all $r^*>r_0(t)$. As consequence, $r_0(\overline{t})\leq r_0(t)+\epsilon$.
\end{proof}

Next, let $\epsilon>0$, $t\in[l,u]$ and $r=r_0(t)+\epsilon$. Two cases are distinguished for proving the uniform convergence in $[l,u]$ of the estimator for the shape index. In Case 1, we will prove that, with probability one and for $n$ large enough, there exists $\delta^*>0$ such that if $0<t-\overline{t}\leq \delta^*$ then $\hat{r}_0(\overline{t})\leq  r_0(\overline{t})+2\epsilon.$  In Case 2, we will prove that there exists $\overline{\delta}>0$ such that if $0<\overline{t}-t\leq \overline{\delta}$ then $\hat{r}_0(\overline{t})\leq  r_0(\overline{t})+4\epsilon.$\vspace{-.5cm}

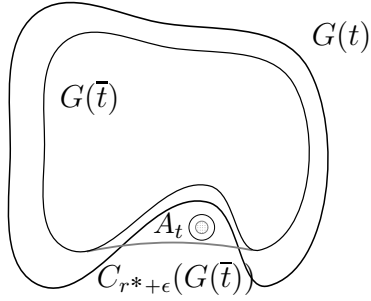
\begin{figure*}[h!]\vspace{-3.85cm}
	\begin{pspicture}(-3.99,-1.9)(10,7.5)
	\scalebox{0.55}{
		\rput(-1.1,-.85){	\scalebox{1.2}[1.3]{	 \psccurve[showpoints=false,fillstyle=none,fillcolor=white,linecolor=black,linewidth=0.3mm,linearc=3](1.5,0)(1,2.5)(1.5,5)(4,4.85)(7,4)(6.2,-0.05)(5,1.5)}}
		\psarc[showpoints=false,fillstyle=none,fillcolor=white,linecolor=gray,linewidth=0.45mm,linearc=3](4.1,-9.867) {10}{78.9}{102}
		
		\psccurve[showpoints=false,fillstyle=none,fillcolor=white,linecolor=black,linewidth=0.3mm,linearc=3](1.5,0)(1,2.5)(1.5,5)(4,4.85)(7,4)(6.2,-0.05)(5,1.5)
		
		\pscircle[linearc=0.25,linecolor=black,linewidth=0.2mm,linestyle=solid,dash=3pt 2pt](4.8,.5){.32}
		\pscircle[linearc=0.25,linecolor=gray,linewidth=0.2mm,linestyle=solid,dash=3pt 2pt,fillstyle=crosshatch*,fillcolor=gray,hatchcolor=white,hatchwidth=1.2pt,hatchsep=.5pt,hatchangle=0](4.8,.5){.16}}
	\rput(1.15,2){$G(\overline{t})$}
	\rput(2.2,0.3){\small{$A_t$}}
	\rput(2.3,-.4){$C_{r^*+\epsilon}(G(\overline{t}))$}
	\rput(4.5,2.8){$G(t)$}
	\end{pspicture}\vspace{-1.39cm}\caption{Elements of proof in Proposition \ref{continuidadlambad}. $B_{\rho_t/2}(c_t)$ in gray color. }\vspace{-.75cm}\label{ooskkkssokkkdkd}
\end{figure*}
$ $\\\underline{Case 1:} Let $\overline{t}<t$ with $\overline{t} \in [l,u]$.
First, we will prove that $\hat{r_0}(\overline{t})\leq r_0(t)+\epsilon$ if $t-\overline{t}\leq \delta_1$ with $\delta_1$ depending on $t$. Secondly, we will prove that $ r_0(t)\leq  r_0(\overline{t})+\epsilon$ if $t-\overline{t}\leq \delta_2$.\\



Lemma \textcolor{blue}{B.2} in Appendix B, ensures that there exists an open ball $B_t$ of radius $\gamma_t$ verifying, with probability one and for $n$ large enough,
$$B_t\subset  C_{r}(\mathcal{X}_n^+(\overline{t})).$$If $t-\overline{t}\leq\delta_1=\min\{\frac{mc}{2}, m \gamma_t\}$, Lemma \textcolor{blue}{B.3} in Appendix B ensures, with probability one and for $n$ large enough,
$$\emptyset\neq\mathcal{X}_n^-(\overline{t})\cap B_t \subset C_{r}(\mathcal{X}_n^+(\overline{t})).$$Therefore,
$$\emptyset\neq\mathcal{X}_n^-(\overline{t})\cap  C_{r}(\mathcal{X}_n^+(\overline{t}))$$and, with probability one and for $n$ large enough,
$$\hat{r_0}(\overline{t})\leq r=r_0(t)+\epsilon.$$
According to Proposition \ref{continuidadlambad}, given $\epsilon>0$,
$$\exists \delta_2>0 \mbox{ such that }\forall t,\mbox{ }\overline{t}\mbox{ verifying} 0<t-\overline{t}<\delta_2\mbox{ then }r_0(t)\leq  r_0(\overline{t})+\epsilon.$$Remember that $\delta_2$ does not rely on $\overline{t}$.\\$ $\\
If $t-\overline{t}\leq \delta^*=\min\{\delta_1,\delta_2\}$ then $\hat{r}_0(\overline{t})\leq  r_0(\overline{t})+2\epsilon.$\\
$ $\\
\underline{Case 2:} Let $\overline{t}>t$ with $\overline{t} \in [l,u]$. According to Proposition \textcolor{blue}{A.3} (b) in Appendix A, if $ \overline{t}-t\leq mc/2$ then
\begin{equation}\label{lewl2b}
	G(\overline{t})\subset G(t)\subset G(\overline{t})\oplus\frac{2}{m}(\overline{t}-t)B_1[0].
\end{equation}As consequence,
\begin{equation}\label{hu2}d_H(G(t),G(\overline{t}))\leq \frac{2}{m}(\overline{t}-t).\end{equation}
According to Proposition \textcolor{blue}{B.1} in Appendix B, given $r=r_0(t)+\epsilon$, there exists an open ball $A_t=B_{\rho_t}(c_t)$ with $A_t\cap G(t)=\emptyset$ and $A_t\subset C_r(G(t))$. Taking $\overline{t}-t\leq \min\{\frac{\epsilon m}{6},mc/2\}= \overline{\delta}_1$, Proposition \textcolor{blue}{B.5} in Appendix B ensures that $B_{\rho_t/4}(c_t)$ verifies that, with probability one,
$$\exists n_0\in\mathbb{N}\mbox{ such that }B_{\rho_t/4}(c_t)\subset C_{r+2\epsilon}(\mathcal{X}_n^+(\overline{t})),\mbox{ }\forall n\geq n_0.$$
According to Lemma \textcolor{blue}{B.6} in Appendix B, with probability one and for $n$ large, it is verified that $B_{\rho_t/4}(c_t)\cap \mathcal{X}_n^-(\overline{t})\neq\emptyset$. So, $\hat{r}_0(\overline{t})\leq r+2\epsilon=r_0(t)+3\epsilon$.\\		
Given $\epsilon>0$ established at the beginning, Proposition \ref{continuidadlambad} guarantees that
$$\exists \overline{\delta}_2>0 \mbox{ such that } t-\overline{t}<\overline{\delta_2} \mbox{ then } r_0(t)\leq r_0(\overline{t})+\epsilon.$$Therefore, if $\overline{t}-t\leq \overline{\delta}=\min\{\overline{\delta}_1,\overline{\delta}_2\}$ then, with probability one and for $n$ large enough, $\hat{r}_0(\overline{t})\leq  r_0(\overline{t})+4\epsilon.$\\

Next, we will take into account Cases 1 and 2. For each $t\in[l,u]$, we consider $\delta_t=\min\{\delta^*,\overline{\delta},\zeta/2\}$. See Assumption A for details on $\zeta$. Then,
$$\{(t-\delta_t,t+\delta_t): t\in[l,u]\}$$is an open covering of the compact $[l,u]$.
Therefore, there exists a finite subrecovering. Then, there exists $t_1,...,t_k\in[l,u]$ and the corresponding $\delta_{t_1}=\delta_1,...,\delta_{t_k}=\delta_k$ verifying
$$[l,u]\subset\bigcup_{j=1}^k (t_j-\delta_j,t_j+\delta_j).$$
Let $\overline{t}\in [l,u]$. For each $j=1,...,k$, with probability one, there exists $n_j\in\mathbb{N}$ such that if $\overline{t}\in (t_j-\delta_j,t_j+\delta_j)$ then
$$\hat{r}_0(\overline{t})\leq   r_0(\overline{t})+4\epsilon,\mbox{ }\forall n\geq n_j.$$Therefore, since
$$\forall\overline{t}\in[l,u]\mbox{ }\exists j\in\{1,...,k\}\mbox{ such that }\overline{t}\in (t_j-\delta_j,t_j+\delta_j)$$it is verified
$$\hat{r}_0(\overline{t}) \leq r_0(\overline{t})+4\epsilon,\mbox{ }\forall n\geq \max\{n_1,n_2,...,n_k\}=N.$$In addition, since $\delta_j\leq\zeta/2$ and $t_j\in [l,u]$ for all $j=1,...,k$, it is verified that$$\max\{t_j+\delta_j, j=1,...,k\}\leq u+\zeta/2\mbox{ and }\min\{t_j-\delta_j,j=1,...,k\}\geq l-\zeta/2.$$
Taking Proposition \ref{alberto5} into account, it can be proved easily that, with probability one and for $n$ large enough, $\mathcal{X}_n^+(\overline{t})\neq\emptyset$ and $\mathcal{X}_n^-(\overline{t})\neq\emptyset$.
Proposition \ref{roooo} guarantees that, with probability one and for $n$ large enough, $r_0(\overline{t})\leq \hat{r}_0(\overline{t})$. Therefore, with probability one, for $n$ large enough,
\begin{equation}\label{jit}
	r_0(\overline{t})\leq \hat{r}_0(\overline{t})\leq  r_0(\overline{t})+4\epsilon.
\end{equation}
Then, with probability one and for $n$ large enough,
$$\forall\overline{t}\in[l,u]\mbox{ it is satisfied that }|r_0(\overline{t})-\hat{r}_0(\overline{t})|<4\epsilon.$$Therefore, with probability one,
\[\lim_{n\rightarrow\infty}\sup_{\overline{t}\in[l,u]}|r_0(\overline{t})-\hat{r}_0(\overline{t})|=0.\qedhere\]

$\vspace{-.24cm} $

\textbf{Proof of Theorem \ref{principal}:}\vspace{-.23cm}\\

Next, some auxiliary proofs are presented. Proposition \ref{llllll} establishes that the estimator $C_{r_n(t)}(\mathcal{X}_n^+(t))$ is contained in the theoretical level set with probability one and for $n$ large enough.

\begin{proposition}\label{llllll}Let $G(t)$ be a compact, nonempty and nonconvex level set. Under assumptions (A), (D) and (K), let $\mathcal{X}_n$ be a random sample generated from a distribution with density function $f$, let $r_0(t)$ and $\hat{r}_0(t)$ be as established in Definitions \ref{r_0_t} and \ref{jejejeje}, respectively. Let $\nu\in(0,1)$ be a fixed number and $r_n(t)=\nu\hat{r}_0(t)$. Then,
	$$\mathbb{P}( C_{r_n(t)}(\mathcal{X}_n^+(t))\subset G(t),\mbox{ }\forall t\in[l,u],\mbox{ eventually})=1.$$
\end{proposition}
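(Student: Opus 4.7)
The plan is to combine the two things we already know: that $\mathcal{X}_n^+(t)$ lies inside $G(t)$ (Proposition \ref{mostramaisenGlambda}), and that $\hat{r}_0(t)$ converges uniformly to $r_0(t)$ (Theorem \ref{co}). The key observation is that because $\nu < 1$ strictly, $r_n(t) = \nu \hat{r}_0(t)$ will eventually be bounded above by $r_0(t)$ itself, and then $r_0(t)$-convexity of $G(t)$ finishes the job via monotonicity of the $r$-convex hull.

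More concretely, first I would pin down a uniform lower bound for $r_0(t)$: under assumption (A), the discussion after Definition \ref{r_0_t} gives $r_0(t) \geq m/k$ for every $t \in [l,u]$. With this in hand, fix $\epsilon > 0$ small enough so that $\nu\epsilon \leq (1-\nu)\,m/k$; any choice such as $\epsilon = (1-\nu)m/(2\nu k)$ works. By Theorem \ref{co}, with probability one there exists $n_0$ such that
\[
\hat{r}_0(t) \leq r_0(t) + \epsilon \quad \text{for all } t \in [l,u] \text{ and all } n \geq n_0.
\]
Multiplying by $\nu$ and using $\nu\epsilon \leq (1-\nu)m/k \leq (1-\nu)\,r_0(t)$,
\[
r_n(t) = \nu \hat{r}_0(t) \leq \nu r_0(t) + \nu\epsilon \leq \nu r_0(t) + (1-\nu)\,r_0(t) = r_0(t),
\]
uniformly in $t \in [l,u]$.

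Next, I would apply the two monotonicity properties of the $r$-convex hull recalled in the introduction: for $0 < r^* \leq r$ one has $C_{r^*}(A) \subset C_{r}(A)$ (applied with $A = \mathcal{X}_n^+(t)$, $r^* = r_n(t)$, $r = r_0(t)$), and the hull is set-monotone, meaning $A \subset B$ implies $C_r(A) \subset C_r(B)$ (which is immediate from the definition as an intersection over balls disjoint from the set, since enlarging the set only shrinks the indexing family). Combined with Proposition \ref{mostramaisenGlambda}, which gives $\mathcal{X}_n^+(t) \subset G(t)$ for every $t \in [l,u]$ eventually, and with the $r_0(t)$-convexity of $G(t)$, we obtain, on the event of full probability where both Theorem \ref{co} and Proposition \ref{mostramaisenGlambda} hold, for all $n$ large enough and all $t \in [l,u]$,
\[
C_{r_n(t)}(\mathcal{X}_n^+(t)) \subset C_{r_0(t)}(\mathcal{X}_n^+(t)) \subset C_{r_0(t)}(G(t)) = G(t),
\]
which is the claim.

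There is no serious obstacle here; the only nuance is that the argument would fail if $\nu = 1$ (because $r_n(t)$ could then exceed $r_0(t)$ by any amount up to $\epsilon$), and this is precisely why the hypothesis $\nu \in (0,1)$ is needed together with the uniform positive lower bound $r_0(t) \geq m/k$ that assumption (A) provides. The rest is just assembling Theorem \ref{co}, Proposition \ref{mostramaisenGlambda}, and the two monotonicity properties of $C_r(\cdot)$.
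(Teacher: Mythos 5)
Your proof is correct and follows essentially the same route as the paper: Proposition \ref{mostramaisenGlambda} for $\mathcal{X}_n^+(t)\subset G(t)$, Theorem \ref{co} plus $\nu<1$ and the uniform bound $r_0(t)\geq m/k$ to get $r_n(t)\leq r_0(t)$ eventually, and then monotonicity of $C_r(\cdot)$ together with $C_{r_0(t)}(G(t))=G(t)$. You merely spell out explicitly the quantitative $\epsilon$-choice and the two monotonicity properties that the paper's proof leaves implicit.
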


\begin{proof}According to Proposition \ref{mostramaisenGlambda},
	$$\mathbb{P}(\mathcal{X}_n^+(t)\subset G(t),\mbox{ }\forall t\in[l,u],\mbox{ eventually})=1.$$According to Theorem \ref{co}, with probability one,
	$$\lim_{n\rightarrow \infty}\sup_{t\in[l,u]}| r_n(t)-\nu r_0(t)|=0.$$Therefore,
	$$\mathbb{P}(r_n(t)\leq r_0(t),\mbox{ }\forall t\in[l,u],\mbox{ eventually})=1$$and, as consequence,
	$$\mathbb{P}( C_{r_n(t)}(\mathcal{X}_n^+(t))\subset   G(t),\mbox{ }\forall t\in[l,u],\mbox{ eventually})=1.\qedhere$$\end{proof}

At this point, it is necessary to introduce some auxiliary sets in order to obtain the convergence rates of the resulting estimator for the level set, see Definitions \ref{auxiset} and \ref{auximuestra}. Really, these new sets are subsets of the original level set $G(t)$ and the sample $\mathcal{X}_n$, respectively. Notice that both are defined from the theoretical density function $f$ so they are unknown. The kernel estimator $f_n$ is not considered. On the other hand and although they depend on some parameters like $n$ in their definition, this fact is not reflected in their names for simplicity in the exposition.

\begin{definition}\label{auxiset}Let $G(t)$ be a compact, nonempty and nonconvex level set. Under assumptions (A) and (D), the set $G^+(t)\subset \mathbb{R}^d$ is defined as the level set with threshold equal to $t+2D_n$. That is, $G^+(t)=G(t+2D_n)$.
\end{definition}

\begin{definition}\label{auximuestra}Let $G(t)$ be a compact, nonempty and nonconvex level set. Under assumptions (A) and (D), let $\mathcal{X}_n$ be a random sample generated from a distribution with density function $f$ and let $G^+(t)\subset \mathbb{R}^d$ be the level set established in Definition \ref{auxiset}. The set $\mathcal{X}_n^{G^+}$ is defined by $\mathcal{X}_n\cap G^+(t)$. Therefore, it can be written as $\mathcal{X}_n^{G^+}=\{X_i\in\mathcal{X}_n:f(X_i)\geq t+2D_n\}$.
\end{definition}

A new class of sets is presented in Definition \ref{waltherG}. This family was already considered in \cite{r2}, see Section 2.

\begin{definition}\label{waltherG}Let $A\subset \mathbb{R}^d$ be a set and $\gamma>0$. Then, $\mathcal{G}_{A}(\gamma)$ denotes all compact sets $B$ that verify ($R_{\gamma}^\gamma$) satisfying $B\subset A$.
\end{definition}


Next, it will be proved that $G^+(t)\in\mathcal{G}_{G(l)}(r_\nu)$ for $n$ large enough and $r_\nu>0$, see Lemma \ref{debemosescribirlo55} for details about the positive constant $r_\nu$.

\begin{lemma}\label{debemosescribirlo55}Let $G(t)$ be a compact, nonempty and nonconvex level set. Let $G^+(t)$ be the set established in Definition \ref{auxiset}. Under assumptions (A), (D) and (K), let $\hat{r}_0(t)$ be as established in Definition \ref{jejejeje}, let $\nu\in(0,1)$ be a fixed number and $r_n(t)=\nu\hat{r}_0(t)$. Then, there exists $0<r_\nu<m/k$ such that
	$$\mathbb{P}(r_n(t)>r_\nu, \mbox{ }\forall t\in [l,u],\mbox{ eventually})=1.$$Further, for $n$ large enough,
	$$G^+(t)\in\mathcal{G}_{G(t)}(r_\nu), \forall t\in [l,u] $$and, therefore,
	$$G^+(t)\in\mathcal{G}_{G(l)}(r_\nu), \forall t\in [l,u].$$
	for $\mathcal{G}_{G(t)}(r_\nu)$, $\mathcal{G}_{G(l)}(r_\nu)$ and $G^+(t)$ established in Definitions \ref{waltherG} and \ref{auxiset}, respectively.
	
\end{lemma}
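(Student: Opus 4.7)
The plan is to split the statement into two essentially independent parts: the uniform lower bound on $r_n(t)$, and the membership in the class $\mathcal{G}_{G(t)}(r_\nu)$. Both rest on the fact, recorded in the discussion following assumption (A) (Theorem 2 in \cite{r2}), that under (A) every level set $G(s)$ with $s$ in a small neighbourhood of $[l,u]$ satisfies $(R_{m/k}^{m/k})$, and in particular $r_0(s)\geq m/k$.

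First I would fix $r_\nu := \nu m/k - \eta$ with $\eta>0$ small enough so that $0 < r_\nu < \nu m/k < m/k$ (the last inequality uses $\nu\in(0,1)$, which is essential). Then Theorem \ref{co} gives, with probability one,
\[
\lim_{n\to\infty}\sup_{t\in[l,u]} |\hat r_0(t)-r_0(t)| = 0,
\]
so eventually $\hat r_0(t) > r_0(t)-\eta/\nu \geq m/k - \eta/\nu$ uniformly in $t\in[l,u]$. Multiplying by $\nu$ yields $r_n(t) = \nu\hat r_0(t) > \nu m/k - \eta = r_\nu$ uniformly on $[l,u]$, eventually, which is the first assertion.

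For the second part, recall $G^+(t)=G(t+2D_n)$. Since $D_n\to 0$, for $n$ large enough one has $2D_n < \zeta$, so $t+2D_n \in [l, u+\zeta]$ for every $t\in [l,u]$. By the cited Theorem 2 in \cite{r2}, every such level set satisfies $(R_{m/k}^{m/k})$; since $r_\nu < m/k$, and the rolling condition for a larger radius implies the rolling condition for any smaller one, it follows that $G^+(t)$ satisfies $(R_{r_\nu}^{r_\nu})$ uniformly in $t\in [l,u]$. Combining with the trivial inclusion $G^+(t)=G(t+2D_n)\subset G(t)$ gives $G^+(t)\in\mathcal{G}_{G(t)}(r_\nu)$. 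The last chain of inclusions $G(t)\subset G(l)$ for $t\in[l,u]$ upgrades this to $G^+(t)\in\mathcal{G}_{G(l)}(r_\nu)$.

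The only delicate step is ensuring that the reference result supplies the rolling condition \emph{uniformly} over $t\in[l,u]$ and for all $n$ large enough, simultaneously for the perturbed thresholds $t+2D_n$; this is where the buffer provided by $\zeta$ in assumption (A) is critical, because without it the perturbation $2D_n$ could push $t$ outside the range on which $(R_{m/k}^{m/k})$ was established. The choice of $\eta$ (equivalently, the gap $m/k - \nu m/k$ one can afford to give up) is the other bookkeeping point, but it is purely algebraic once $\nu<1$ is assumed.
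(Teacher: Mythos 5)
Your proposal is correct and follows essentially the same route as the paper: choose $r_\nu$ strictly below $\nu\, m/k$, get the uniform lower bound $r_n(t)>r_\nu$ from Theorem \ref{co} together with $r_0(t)\geq m/k$, and then obtain the rolling condition for $G^+(t)=G(t+2D_n)$ by applying Theorem 2 of \cite{r2} on a slightly enlarged threshold range, downscaling the rolling radius to $r_\nu$ and using $G^+(t)\subset G(t)\subset G(l)$. The only bookkeeping refinement, which you yourself flag, is that the enlarged range must itself leave a positive buffer for that theorem, so one should take, e.g., $2D_n\leq\zeta/2$ and apply it with endpoints $l-\zeta/2$, $u+\zeta/2$ and buffer $\zeta/2$, exactly as the paper does, rather than using the full interval $[l,u+\zeta]$.
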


\begin{proof}	For all $t\in[l,u]$, $G(t)$ satisfied the inside and outside rolling condition for $r^*=m/k>0$. Since
	$$r_0(t)=\sup\{\gamma>0: C_\gamma(G(t))=G(t)\},$$it is satisfied that $$0<r^*\leq r_0(t),\mbox{ }\forall t\in[l,u].$$
	Therefore, for $\nu\in(0,1)$ fixed previously,$$0<\nu r^*\leq \nu r_0(t) ,\mbox{ } \forall t\in[l,u].$$Let $r_\nu>0$ be a number verifying that
	$$0<r_\nu<\nu r^*\leq \nu r_0(t) ,\mbox{ } \forall t\in[l,u].$$
	Since $r_\nu<r^*<r_0(t)$, $G(t)$ satisfies the outside and inside rolling property for $r_\nu$ and for all $t\in[l,u]$.
	Let $0<\epsilon< \nu r^*-r_\nu$, according Theorem \ref{co}, it is easy to prove that $\mathbb{P}(r_n(t)>r_\nu, \forall t\in[l,u], \mbox{ eventually})=1$ taking into account that $ r_n(t)$ converges uniformly to $\nu r_0(t)$, almost surely. \\In addition, if Theorem 2 in \cite{r2} is applied with $\overline{l}=l-\zeta/2$, $\overline{u}=u+\zeta/2$ and $\overline{\zeta}=\zeta/2$, we can ensure that
	$$\exists n_0\in\mathbb{N}\mbox{ such that } r_0(t+2D_n)\geq m/k,\mbox{ }\forall t\in [l,u]\mbox{ and }\forall n\geq n_0 .$$
	Since $0<r_\nu<m/k$,
	$$r_0(t+2D_n)\geq m/k>r_\nu,\mbox{ }\forall t\in [l,u]\mbox{ and }\forall n\geq n_0.$$
	Then, since $G^+(t)=G(t+2D_n)\subset G(t)$ for all $n$,
	\[G^+(t) \in\mathcal{G}_{G(t)}(r_\nu),\mbox{ }\forall t\in [l,u]\mbox{ and }\forall n\geq n_0.\qedhere\]
\end{proof}

In Lemma \ref{dddd}, it will proved that, given the threshold $t$, the set $\mathcal{X}_{n}^{G^+} $ is eventually contained in $\mathcal{X}_{n}^+(t)$.

\begin{lemma}\label{dddd}Let $G(t)$ be a compact, nonempty and nonconvex level set. Under assumptions (A), (D) and (K), let $\mathcal{X}_n$ be a random sample generated from a distribution with density function $f$, let $\mathcal{X}_n^+(t)$ be as established in Definition \ref{jejejeje} and let $\mathcal{X}_{n}^{G^+}$ be the subsample defined in Definition \ref{auximuestra}. Then,
	$$\mathbb{P}(\mathcal{X}_{n}^{G^+}\subset \mathcal{X}_{n}^+(t), \mbox{ }\forall t\in[l,u],   \mbox{ eventually})=1.$$
\end{lemma}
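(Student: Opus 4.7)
The plan is to exploit the decomposition $G(l) \subset C \cup G(u)$ (which follows from $G(l)\setminus\interior(G(u))\subset C$ in Assumption A) and handle the two pieces separately using the two uniform density-estimation results already used in the proofs of Propositions \ref{mostramaisenGlambda} and \ref{alberto5}.

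First I would note that any $X_i\in\mathcal{X}_n^{G^+}$ satisfies $f(X_i)\geq t+2D_n>l$ (for $n$ large, since $D_n>0$), so $X_i\in G(l)$. Therefore either $X_i\in C$ or $X_i\in G(u)\cap C^c$. The goal is to show that in both cases $f_n(X_i)\geq t+D_n$, which is exactly the condition for $X_i\in\mathcal{X}_n^+(t)$.

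For the case $X_i\in C$, I would invoke Proposition A.1: there exists $N>0$ such that $\sup_{C}|f_n-f|\leq N(\log n/n)^{p/(d+2p)}$ almost surely. Choosing $M\geq N$ in Assumption (D) (which is the standing hypothesis) gives $|f_n(X_i)-f(X_i)|\leq D_n$ a.s. for $n$ large, so
\[
f_n(X_i)\geq f(X_i)-D_n\geq (t+2D_n)-D_n=t+D_n.
\]
This bound is uniform in $t\in[l,u]$ because the bound on $\sup_C|f_n-f|$ is.

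For the case $X_i\in G(u)\cap C^c$, I would invoke Proposition A.2: there exists $w>0$ such that, almost surely, $f_n(z)\geq u+w/2$ for all $z\in G(u)\cap C^c$ and all $n$ large enough. Combined with $D_n\to 0$ (so $D_n<w/2$ for $n$ large) and $t\leq u$, this gives
\[
f_n(X_i)\geq u+\tfrac{w}{2}>u+D_n\geq t+D_n,
\]
again uniformly in $t\in[l,u]$.

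Taking the larger of the two threshold indices $n_0$ from the two cases (both provided almost surely by Propositions A.1 and A.2, which already involve uniformity over their respective domains), one obtains $\mathcal{X}_n^{G^+}\subset\mathcal{X}_n^+(t)$ simultaneously for every $t\in[l,u]$ eventually, with probability one. The only mildly delicate point is to confirm that the two $n_0$'s do not depend on $t$; but this is immediate since the controlling inequalities from Propositions A.1 and A.2 are stated uniformly on $C$ and on $G(u)\cap C^c$ respectively, and the threshold shift $t+2D_n$ is absorbed cleanly by the slack $D_n$ (resp. $w/2$) on each region. There is no genuine obstacle here; the argument is essentially a bookkeeping exercise reusing the two propositions already used elsewhere in the paper.
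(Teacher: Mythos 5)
Your proof is correct and follows essentially the same route as the paper's: the same case split into $X_i\in C$ and $X_i\in G(u)\cap C^c$, using Proposition A.1 with $M\geq N$ to absorb the $2D_n$ slack on $C$, and Proposition A.2 together with $D_n\to 0$ and $t\leq u$ off $C$. The uniformity-in-$t$ observation you make explicitly is exactly what the paper relies on implicitly, so there is nothing to add.
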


\begin{proof}Let $X_i\in \mathcal{X}_{n}^{G^+}$. Therefore, $f(X_i)\geq t+2D_n$. According to Proposition \textcolor{blue}{A.1} in Appendix A,
	$$\sup_{C}|f_n-f|=O\left(  \left(  \frac{\log{n}}{n}\right)^{p/(d+2p)}\right), \mbox{ almost surely.}$$where $C\subset U$ is under conditions of Proposition \textcolor{blue}{A.1} in Appendix A. So, there exists $N>0$ such that, with probability one and for $n$ large enough,
	\begin{equation}\label{gt}\sup_{C}|f_n-f|\leq N  \left(  \frac{\log{n}}{n}\right)^{p/(d+2p)} .
	\end{equation}Two cases are considered: $X_i$ belongs to $C$ or $X_i$ does not belong to $C$.
	\begin{enumerate}
		\item  Let $X_i\in C $. According to (\ref{gt}), if $M\geq N$,
		$$|f_n(X_i)-f(X_i)|\leq D_n .$$Therefore,
		$$f_n(X_i)\geq f(X_i)-D_n\geq t +2D_n -D_n=t+D_n$$and, hence, $X_i\in\mathcal{X}_{n}^+(t)$.
		
		\item If $X_i\notin C $ then $X_i\in G(u)\cap C^c$ since $X_i\in G(l)$ and $G(l)\setminus \interior(G(u))\subset C$. According to Proposition \textcolor{blue}{A.2} in Appendix A, with probability one and for $n$ large enough, $f_n(X_i)\geq u+ v/2$ for some $v>0$. In addition, since $D_n$ converges to zero, for $n$ large enough, $2D_n< v/2$. Then, with probability one and for $n$ large enough,
		\[f_n(X_i)\geq u+ \frac{v}{2}\geq t+\frac{v}{2}\geq t+D_n. \qedhere\]
\end{enumerate}\end{proof}

According to Lemma \ref{dddd}, with probability one and for $n$ large enough, it is verified that $C_{r_n(t)}(\mathcal{X}_{n}^{G^+})\subset C_{r_n(t)}(\mathcal{X}_{n}^+(t))\subset G(t)$. That is, $\mathcal{X}_{n}^+(t)$ is at least as good as $\mathcal{X}_{n}^{G^+}$ in order to estimate $G(t)$. Remember that  $\mathcal{X}_{n}^{G^+}$ is constructed from $f$. It does not depend on the kernel estimator $f_n$. In addition, $\mathcal{X}_{n}^{G^+}$ would be the ideal sample for estimating $G^+(t)$. Theorem \ref{principal} uses these ideas for obtaining the convergence rates of the level set estimator proposed.

Let $r_\nu$ be a positive constant under the conditions in Lemma \ref{debemosescribirlo55} and let $r>0$ such that $0<r<r_\nu$. Let $\epsilon_n=\left(\frac{C\log{n}}{n}\right)^{\frac{2}{d+1}}$ where $C>0$ denotes a large enough constant to be established later. Since $\lim_{n\rightarrow\infty}\epsilon_n=0$,
$$\exists n_0\in\mathbb{N}\mbox{ such that }0<\epsilon_n<\min\left\{\frac{r}{4},1\right\},\mbox{ }\forall n\geq n_0.$$
According to Proposition \textcolor{blue}{A.4} in Appendix A, since $f\geq l>0$,
$$\mathbb{P}(A\oplus B_{r-3\epsilon_n}[0]\nsubset \left[\left(A\cap\mathcal{X}_n\right)\oplus B_r[0]\right]\mbox{ for some }A\in \mathcal{G}_{G(l)}(r_\nu))$$
$$\leq D(\epsilon_n, G(l)\oplus B_r[0])D\left(\frac{\epsilon_n}{10r},S^{d-1}\right)\exp\left\{-nal(r-2\epsilon_n)^\frac{d-1}{2}(\epsilon_n/2)^{\frac{d+1}{2}}\right\},$$where $D(\epsilon,B)=\max\{card\mbox{ } V:V\subset B,\mbox{ }|x-y|>\epsilon\mbox{ for different }x,y\in V\}$, $S^{d-1}$ denotes the unit sphere in $\mathbb{R}^d$ and $a$ is a dimensional constant. Therefore, if $n\geq n_0$ then
$r-2\epsilon_n\geq r/2$ and
$$\mathbb{P}(A\oplus B_{r-3\epsilon_n}[0]\nsubset \left[\left(A\cap\mathcal{X}_n\right)\oplus B_r[0]\right]\mbox{ for some }A\in \mathcal{G}_{G(l)}(r_\nu))$$
$$\leq Q \epsilon_n^{-d} \epsilon_n^{-(d-1)}\exp\left\{-nal\left(\frac{r}{2}\right)^{\frac{d-1}{2}}\left(\frac{C\log{n}}{2^{(d+1)/2}n}\right)\right\}=Q\epsilon_n^{(-2d+1)}\exp\{-W\log{n}\}$$with $Q$ is a constant depending on $r$ and the dimension $d$ and $W=\frac{alr^{d-1}C}{2^d}.$ Therefore, if $n\geq n_0$,
$$\mathbb{P}(A\oplus(r-3\epsilon_n)B_1[0]\nsubset \left[\left(A\cap\mathcal{X}_n\right)\oplus B_r[0]\right]\mbox{ for some }A\in \mathcal{G}_{G(l)}(r_\nu))$$
$$\leq Q\epsilon_n^{-(2d-1)}n^{-W}= Q^{'}\left(\frac{n}{\log{n}}\right)^{\frac{2(2d-1)}{d+1}}n^{-W}$$where $Q^{'}=QC^{\frac{2(2d-1)}{d+1}}$. If $W>\frac{2(2d-1) }{d+1}$ it is verified that
$$\sum_{i=1}^\infty \left(\frac{n}{\log{n}}\right)^{\frac{2(2d-1) }{d+1}}n^{-W}<\infty.$$So,
$$\mathbb{P}(A\oplus B_{r-3\epsilon_n}[0]\nsubset \left[\left(A\cap\mathcal{X}_n\right)\oplus B_r[0]\right]\mbox{ for some }A\in \mathcal{G}_{G(l)}(r_\nu),\mbox{ infinitely often})=0.$$
Then, with probability one and for $n$ large enough,
$$ A\oplus B_{r-3\epsilon_n}[0]\subset  \left(A\cap\mathcal{X}_n\right)\oplus B_r[0] ,\mbox{ }\forall A\in\mathcal{G}_{G(l)}(r_\nu).$$According to Lemma \ref{debemosescribirlo55}, $G^+(t)\in \mathcal{G}_{G(l)}(r_\nu)$ for all $t\in[l.u]$. Therefore, with probability one and for $n$ large enough,
$$(G^+(t)\oplus B_{r-3\epsilon_n}[0])\ominus B_r[0]\subset (\mathcal{X}_n^{G^+}\oplus B_r[0])\ominus B_r[0]       =C_r(\mathcal{X}_n^{G^+}),\mbox{ }\forall t\in [l,u].$$
Since $G^+(t)$ is $(r-3\epsilon_n)-$convex because $r-3\epsilon_n\leq r_\nu$, it is satisfied that
$$(G^+(t)\oplus B_{r-3\epsilon_n}[0])\ominus B_r[0]=$$
$$=(G^+(t)\oplus  B_{r-3\epsilon_n}[0])\ominus( B_{r-3\epsilon_n}[0]\oplus B_{3\epsilon_n}[0])$$
$$=(G^+(t)\oplus  B_{r-3\epsilon_n}[0]\ominus B_{r-3\epsilon_n}[0])\ominus  B_{3\epsilon_n}[0]=G^+(t)\ominus  B_{3\epsilon_n}[0],\mbox{ }\forall t\in [l,u].$$
Therefore, since $r_\nu>r>0$, with probability one and for $n$ large enough,
$$G^+(t)\ominus  B_{3\epsilon_n}[0]\subset C_{r}(\mathcal{X}_n^{G^+})\subset   C_{r_\nu}(\mathcal{X}_n^{G^+}) \mbox{ }\forall t\in [l,u].$$According to Lemma \ref{debemosescribirlo55}, with probability one, for $n$ large enough,
$$r_n(t)\geq r_\nu,\mbox{ }\forall t\in [l,u].$$
Then, with probability one and for $n$ large,
$$G^+(t)\ominus  B_{3\epsilon_n}[0]\subset C_{r_n(t)}(\mathcal{X}_n^{G^+}),\mbox{ }\forall t\in [l,u].$$
According to Lemma \ref{dddd} and Proposition \ref{mostramaisenGlambda}, $\mathcal{X}_n^{G^+}\subset\mathcal{X}_n^+(t)\subset G(t) $ and $r_n(t)\leq r_0(t)$. Then, it is verified, with probability one, for $n$ large enough,
\begin{equation}\label{900}
	G^+(t)\ominus   B_{3\epsilon_n}[0]\subset C_{r_n(t)}(\mathcal{X}_n^+(t))\subset C_{r_0(t)}(G(t))=G(t),\mbox{ }\forall t\in [l,u].
\end{equation}
Using (\ref{900}), with probability one and for $n$ large enough,
$$d_H(C_{r_n(t)}(\mathcal{X}_n^+(t)),G(t))\leq d_H(G^+(t)\ominus B_{3\epsilon_n}[0],G(t)).$$By the triangle inequality,
\begin{equation}\label{juioplr}
	d_H(C_{r_n(t)}(\mathcal{X}_n^+(t)),G(t)) \leq d_H(G^+(t),G(t))+d_H(G^+(t),G^+(t)\ominus   B_{3\epsilon_n}[0]),\mbox{ }\forall t\in [l,u].
\end{equation}
Since $\lim_{n\rightarrow \infty}D_n=0$, for $n$ large, $2D_n<\min\left\{(m/2)c,\frac{\zeta}{2}\right\}$ and, according to Proposition \textcolor{blue}{A.3} (b) in Appendix A, for $n$ large,
$$G(t)\subset  G^+(t)\oplus B_{\frac{4}{m}D_n}[0],\mbox{ }\forall t\in [l,u].$$
Since $G^+(t)\subset G(t)$, $d_H(G^+(t),G(t))=O(D_n)$. On the other hand,
$d_H(G^+(t),G^+(t)\ominus  B_{3\epsilon_n}[0])=O(\epsilon_n) \mbox{ for }t\in[l,u]$. As consequence, using (\ref{juioplr}),
\[ d_H(C_{r_n(t)}(\mathcal{X}_n^+(t)),G(t))=O(\max\{ D_n,\epsilon_n \}), \mbox{ almost surely.} \]

The statements of the theorem also hold when Lebesgue is used instead of Hausdorff distance. See Remark 1 in \cite{r2}.

$\vspace{-.24cm} $

\textbf{Proof of Theorem \ref{principal2}:}\vspace{-.23cm}\\

Let $a_n=(4/m)D_n+\epsilon_n$ where $\epsilon_n$ denotes a sequence of the order $\left(\frac{\log{n}}{n}\right)^{\frac{2}{d+1}}$ established in the proof of Theorem \ref{principal}. Next, it will be proved that, with probability one and for $n$ large enough,
\begin{equation}\label{jip6}
	G(t)\ominus a_nB_1[0]\subset C_{r_n(t)}(\mathcal{X}_n^+(t))\subset G(t)\oplus a_nB_1[0],\mbox{ }\forall t\in[l,u].
\end{equation}According to equation (\ref{900}), with probability one and for $n$ large enough,
$$ G^+(t)\ominus 3\epsilon_n B_1[0]\subset C_{r_n(t)}(\mathcal{X}_n^+(t))\subset G(t),\mbox{ } \forall t\in[l,u].
$$Let $c$ be the constant introduced in Proposition \textcolor{blue}{A.3} (b) in Appendix A. Since $\lim_{n\rightarrow\infty}D_n=0$, for $n$ large enough, $ 2D_n\leq mc/2$.\\
Then, Proposition \textcolor{blue}{A.3} (b) ensures that
$$G^+(t) \ominus\frac{4}{m}D_nB_1[0]\subset G(t)\subset G^+(t) \oplus\frac{4}{m}D_nB_1[0]$$for $n$ large enough.
Therefore, for all $G^+(t)\in\mathcal{G}_c(r_\nu)$, $t\in[l,u]$ and $n$ large enough,
$$ G(t)\ominus a_n B_1[0] \subset G^+(t)\ominus 3\epsilon_n B_1[0]\subset C_{r_n(t)}(\mathcal{X}_n^+(t))\subset G(t)\subset G(t)\oplus a_n B_1[0].$$
Next, it will be proved that
\begin{equation}\label{jip}
	\sup_{t\in[l,u]}|\mathbb{P}_n(G(t))-\mathbb{P}(G(t))|\leq a_n.
\end{equation}According to Proposition 1 in \cite{r2}, for $r>0$ and a compact $C$ it is verified
\begin{equation*}
	\sup_{A\in \mathcal{G}_C(r)}|\mathbb{P}_n(A)-\mathbb{P}(A)| =
	\begin{cases}
		O\left(\frac{1}{n^{2/(d+1)}}\right)\mbox{ }a.s. & \text{if $d>3$}\vspace{1.6mm}\\
		O\left(\frac{\log{n}}{\sqrt{n}}\right)\mbox{ }a.s. & \text{if $d=1,2,3$.}
	\end{cases}
\end{equation*}Theorem 2 in \cite{r2} ensures that $G(t)\in\mathcal{G}_{G(l)}(m/k)$, for all $t\in[l,u]$. \\		
Following the proof of Theorem 3 in \cite{r2} (see equation (23), page 2295), (\ref{jip6}) and (\ref{jip}) guarantee that, for $n$ large and certain constant $M$,
\begin{equation}\label{jip9}
	\sup_{\tau\in[\underline{\tau},\overline{\tau}]}|\hat{f}_\tau-f_\tau|\leq M a_n.
\end{equation}Furthermore, since $\lim_{n\rightarrow \infty}a_n=0$, it is verified that
$Ma_n\leq (m/2)c$ for $n$ large enough.\\
In addition, equation (\ref{jip9}) and Proposition \textcolor{blue}{A.3} (b) allows to ensure that, for all $\tau\in[\underline{\tau},\overline{\tau}]$ and for $n$ large enough (assuming, for instance, $\hat{f}_{\tau}>f_{\tau}$; similarly if $ \hat{f}_\tau<f_{\tau}$),
$$L(\tau) \ominus \frac{2}{m}Ma_n B_1[0]\subset G(\hat{f}_\tau)\subset L(\tau) \oplus \frac{2}{m}Ma_n B_1[0].$$Taking also into account equation (\ref{jip6}), with probability one and for $n$ large enough,
\[L(\tau)\ominus \left(\frac{2}{m}Ma_n+a_n\right)B_1[0]\subset C_{r_n(\hat{f}_\tau)}(\mathcal{X}_n^+(\hat{f}_\tau))\subset L(\tau)\oplus\left (\frac{2}{m}Ma_n+a_n\right)B_1[0].\] Since $a_n=(4/m)D_n+\epsilon_n$,
\[ d_H(C_{r_n(\hat{f}_\tau)}(\mathcal{X}_n^+(\hat{f}_\tau))),L(\tau))=O(\max\{ D_n,\epsilon_n \}), \mbox{ almost surely.}  \]
As before, the statements of the theorem also hold when Lebesgue is used instead of Hausdorff distance. See Remark 1 in \cite{r2}.

$ $\\

\appendix
\textbf{Supplementary material for Minimax Hausdorff estimation of density level sets.} Appendix A contains some mathematical results in \cite{r2}. They are useful for establishing some proofs in this work. In Appendix B, some auxiliary results are proved. They are fundamental to prove the consistency of the estimator $\hat{r}_0(t)$.

\section{Walther's mathematical results}\label{app}

Many proofs in previous sections take into account mathematical aspects considered in \cite{r2}. Next, we will summarize these theoretical results.  In particular, Proposition \ref{enPruebaTeorema3Walther} can be obtained directly from proof of Theorem 3 in \cite{r2}. It guarantees the existence of a compact set  $C$ where the convergence rate for the density kernel estimator is established.



\begin{proposition}\label{enPruebaTeorema3Walther}Under assumptions (A) and (K), there exist $\upsilon>0$ and a compact set $C$ verifying that $$G(l)\setminus \interior (G(u))\oplus  B_{\upsilon}[0]\subset U$$and$$G(l)\setminus \interior (G(u))\oplus B_{\frac{\upsilon}{2}}[0]\subset C.$$See assumption A for details on $U$. In addition,
	\begin{equation*}
		\sup_{C}|f_n-f| =O\left(  \left(  \frac{\log{n}}{n}\right)^{p/(d+2p)}\right),\mbox{ almost surely}.
	\end{equation*}
	\begin{figure}[h!]\centering
		\begin{pspicture}(-2,-1.95)(10,6.5)
		
		\rput(-1.58,-1.){\scalebox{1.3}[1.4]{\psccurve[showpoints=false,fillstyle=solid,fillcolor=white,linecolor=black,linewidth=0.15mm,linearc=3,linestyle=dashed,dash=2pt 2pt](1.5,0)(1.9,2.5)(1.5,5)(4,4.5)(7,4)(6.2,-0.01)(5,0.6)}}
		\psccurve[showpoints=false,fillstyle=solid,fillcolor=white,linecolor=black,linewidth=0.25mm,linearc=3](1.5,0)(1.9,2.5)(1.5,5)(4,4.35)(7,4)(6.2,.3)(5,1.5)

		\psccurve[showpoints=false,fillstyle=solid,fillcolor=white,linecolor=gray,linewidth=0.25mm,linearc=3](5.8,1.3)(6.6,3.5)(6.15,0.9)
		
		\rput(1.93,0.61){\scalebox{0.7}[0.75]{\psccurve[showpoints=false,fillstyle=solid,fillcolor=white,linecolor=black,linewidth=0.25mm,linearc=3,linestyle=dashed,dash=3pt 2pt](5.8,1.3)(6.6,3.5)(6.15,0.9) }}
		\psccurve[showpoints=false,fillstyle=solid,fillcolor=white,linecolor=gray,linewidth=0.25mm,linearc=3](2,0.8)(2.9,2)(2.55,0.8)
		
		\rput(0.9,0.5){\scalebox{0.65}[0.65]{\psccurve[showpoints=false,fillstyle=solid,fillcolor=white,linecolor=black,linewidth=0.25mm,linearc=3,linestyle=dashed,dash=3pt 2pt](2,0.8)(2.9,2)(2.55,0.8)  }}
		
		\rput(5.5,3){$G(u)$}
		\rput(7.8,5.00){$U$}
		\rput(0.8,0.3){$G(l)$}
		\end{pspicture}\vspace{-.3cm}\caption{Elements in Proposition \ref{enPruebaTeorema3Walther}. $G(u)$ in gray, $G(l)$ in black and the open set $U$ in dashed line.}\label{oookffghhffffffffffffffffffffffhg1o}
	\end{figure}
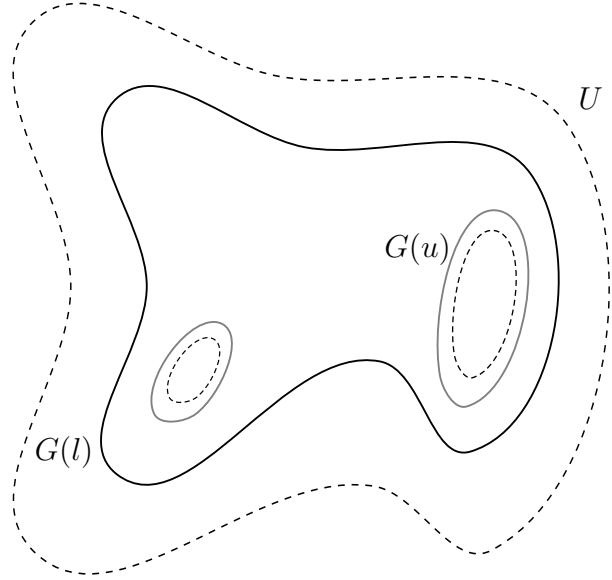
\end{proposition}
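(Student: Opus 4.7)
The plan is to construct $\upsilon$ and $C$ from assumption (A).2 via a standard compactness argument, and then invoke a classical uniform-in-compact-set convergence result for kernel density estimators to deliver the stated rate.

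First I would locate $G(l)\setminus\interior(G(u))$ strictly inside $U$. Since $l-\zeta<l$, we have $G(l)\subset G(l-\zeta)\subset\overline{G(l-\zeta)}$; since $u<u+\zeta$, we have $G(u+\zeta)\subset G(u)$, so $\interior(G(u+\zeta))\subset\interior(G(u))$ and hence $\interior(G(u))^{c}\subset\interior(G(u+\zeta))^{c}$. Combining,
\[
G(l)\setminus\interior(G(u))\subset \overline{G(l-\zeta)}\setminus\interior(G(u+\zeta))\subset U,
\]
where the last inclusion is assumption (A).2. Since $G(u+\zeta)$ is bounded and $\overline{G(l-\zeta)}\setminus\interior(G(u+\zeta))\subset U$ with $U$ bounded, $\overline{G(l-\zeta)}$ is bounded, and then so is $G(l)\setminus\interior(G(u))$; being closed in $\mathbb{R}^{d}$ (difference of a closed and an open set), it is compact.

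Next, because the compact set $G(l)\setminus\interior(G(u))$ sits inside the open set $U$, the distance $d(G(l)\setminus\interior(G(u)),U^{c})$ is strictly positive, so there exists $\upsilon>0$ with
\[
\bigl(G(l)\setminus\interior(G(u))\bigr)\oplus B_{\upsilon}[0]\subset U .
\]
I would then simply set $C:=\bigl(G(l)\setminus\interior(G(u))\bigr)\oplus B_{\upsilon/2}[0]$. This $C$ is closed and bounded, hence compact, satisfies the second inclusion by construction, and lies inside $U$ thanks to the first inclusion. This handles the geometric half of the statement; it is mostly routine once (A).2 is unpacked.

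For the uniform rate on $C$, I would invoke the standard analysis of the kernel density estimator on a compact set strictly interior to the smoothness region. Under (A), $f\in\mathcal{C}^{p}(U)$ and $C\subset U$ is compact, so a Taylor expansion combined with the order-$p$ property of $K$ yields a uniform bias bound $\sup_{C}|\mathbb{E}f_{n}-f|=O(h^{p})$ (the bounded support of $K$ ensures that for $h$ small enough the convolution stays inside $U$ over $C$, so no boundary issues arise). For the stochastic part, the bounded variation and bounded support of $K$ make the class $\{K((x-\cdot)/h):x\in C\}$ a uniformly bounded VC-subgraph family, so a Talagrand/Giné–Guillou-type exponential inequality gives $\sup_{C}|f_{n}-\mathbb{E}f_{n}|=O(\sqrt{\log n/(nh^{d})})$ almost surely. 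With the choice $h\asymp(\log n/n)^{1/(d+2p)}$ from (K), the bias $h^{p}$ and the variance $\sqrt{\log n/(nh^{d})}$ balance at the rate $(\log n/n)^{p/(d+2p)}$, yielding the claim.

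The only real obstacle is the probabilistic half: invoking the correct uniform concentration tool (e.g.\ Einmahl–Mason or Giné–Guillou) and verifying its measurability/VC hypotheses for the kernel class. The topological construction of $\upsilon$ and $C$ is immediate, and the bias analysis is a routine Taylor expansion using the order-$p$ condition on $K$.
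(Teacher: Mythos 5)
Your proposal is correct and, in substance, it is the argument the paper relies on: the paper gives no self-contained proof of this proposition but imports it from the proof of Theorem 3 in Walther (1997) (\cite{r2}), where the compact set is obtained by exactly your enlargement argument — $G(l)\setminus \interior(G(u))$ is compact (closed minus open, bounded because $G(u+\zeta)$ and $U$ are bounded), sits inside the open set $U$ by (A).2, hence admits $\upsilon>0$ with the $\upsilon$-enlargement in $U$, and $C$ is taken as the $\upsilon/2$-enlargement — while the rate $\sup_C|f_n-f|=O\left((\log n/n)^{p/(d+2p)}\right)$ a.s. comes from the standard bias/variance balance for an order-$p$ kernel with the bandwidth of (K). The only details worth making explicit in your statistical half are that $\sup(f)<\infty$ from (A).1 supplies the boundedness needed for the Gin\'e--Guillou/Einmahl--Mason bound, and that the compactness (in particular closedness) of the level sets, assumed throughout the paper, is what makes $G(l)\setminus\interior(G(u))$ closed.
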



Proposition \ref{enPruebaTeorema3Walther3} corresponds to equation (15) in \cite{r2}. The behavior of the kernel density estimator is studied in the complement of the compact set $C$.

\begin{proposition}\label{enPruebaTeorema3Walther3}Let $C$ be the compact set in Proposition \ref{enPruebaTeorema3Walther}. Under assumptions (A) and (K), there exists $w>0$ verifying that
	\begin{equation*}
		\mathbb{P}\left( \inf_{G(u)\cap C^c} f_n(x)>u+\frac{w}{2},\mbox{ eventually}\right)=1
	\end{equation*}and\begin{equation*}
		\mathbb{P}\left( \sup_{G(l)^c \cap C^c } f_n(x)<l-\frac{w}{2},\mbox{ eventually} \right)=1.
	\end{equation*}
	
\end{proposition}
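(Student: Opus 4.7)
My plan is to first establish strict deterministic inequalities $\inf_{G(u)\cap C^c} f>u$ and $\sup_{G(l)^c\cap C^c} f<l$ by compactness and the regularity of $f$, and then transfer them to $f_n$ via uniform consistency of the kernel estimator, using the bounded support of $K$ to handle the potentially unbounded set $G(l)^c$.

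For the infimum bound, I exploit that $C\supset (G(l)\setminus \interior(G(u)))\oplus B_{\upsilon/2}[0]$ contains an open neighborhood of $\partial G(u)$, so $\overline{G(u)\cap C^c}$ is a compact subset of $G(u)$ that is disjoint from $\partial G(u)$. Since $|\nabla f|\geq m>0$ on $U\supset\partial G(u)$, the value $u$ is a regular level and $\{f=u\}\subset \partial G(u)$, so the minimum of $f$ on the compact closure cannot equal $u$; continuity then gives $\inf_{G(u)\cap C^c} f\geq u+\eta_1$ for some $\eta_1>0$. The supremum bound is obtained symmetrically: on the compact set $\overline{G(l)^c\cap C^c}\cap \overline{G(l-\zeta)}$ (which is bounded by assumption (A)) the same argument yields $\sup f\leq l-\eta_2$, while outside $\overline{G(l-\zeta)}$ one has $f<l-\zeta$ by definition.

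To transfer these bounds to $f_n$, I note that assumption (K1) forces $f_n$ to vanish outside an $h$-neighborhood of $\mathcal{X}_n$ scaled by the diameter of $\mathrm{supp}\,K$; hence for $n$ large $f_n$ is supported in a fixed bounded enlargement $\overline{D}$ of $\overline{G(l-\zeta)}$. On the compact set $\overline{D}$, standard uniform consistency of kernel density estimators under the regularity in (A) and the bandwidth condition (K2) yields $\sup_{\overline{D}}|f_n-f|\to 0$ almost surely. Taking $w=\min(\eta_1,\eta_2)>0$, eventually $\sup_{\overline{D}}|f_n-f|<w/4$, giving $f_n\geq f-w/4\geq u+3w/4>u+w/2$ on $G(u)\cap C^c\subset \overline{D}$ and $f_n\leq f+w/4\leq l-3w/4<l-w/2$ on $G(l)^c\cap C^c\cap \overline{D}$; outside $\overline{D}$ one has $f_n\equiv 0<l-w/2$ (using $l>0$).

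The main obstacle is the unboundedness of $G(l)^c$ and the fact that the uniform rate provided by Proposition~\ref{enPruebaTeorema3Walther} applies only to the compact set $C$. This is resolved by the bounded-support property of $K$, which confines $f_n$ to a fixed compact region and reduces the required control of $f_n-f$ to a classical compact-set kernel-consistency statement.
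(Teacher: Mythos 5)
You should first be aware that the paper itself does not prove this proposition: Appendix A merely records it as a restatement of equation (15) in \cite{r2}, so your argument has to stand on its own. Your deterministic first step is essentially the right idea: since $C$ contains the collar $(G(l)\setminus \interior(G(u)))\oplus B_{\upsilon/2}[0]$ and $u$, $l$ are regular values of $f$ on $U$ (so $\{f=u\}\subset\partial G(u)$ and $\{f=l\}\subset\partial G(l)$), a compactness argument yields a positive separation of $f$ from the levels $u$ and $l$ on the two regions (with two caveats: $f$ is only assumed $\mathcal{C}^p$ on $U$, so the part of $G(u)\cap C^c$ lying in $\interior(G(u+\zeta))$, where trivially $f\ge u+\zeta$, has to be treated separately; and for the transfer to $f_n$ you in fact need the separation on an $Rh$-neighbourhood of these regions, not just on the regions themselves, which requires the additional observation that a segment joining $\interior(G(u))$ to $G(u)^c$ must cross $\partial G(u)\subset G(l)\setminus\interior(G(u))$, hence cannot start at distance $\ge\upsilon/2$ from that set and have length $<\upsilon/2$).

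The genuine gap is in the transfer to $f_n$. The reduction you rely on, namely that $f_n$ vanishes outside a fixed bounded enlargement $\overline{D}$ of $\overline{G(l-\zeta)}$, is not justified by (A): only $G(u+\zeta)$ and $U$ are assumed bounded, while the region $\{0<f<l-\zeta\}$ — and hence the support of $f$, where the sample points live — may be unbounded or in any case much larger than any fixed enlargement of $\overline{G(l-\zeta)}$; then sample points land outside $\overline{D}$ and $f_n>0$ there, so the unbounded part of $G(l)^c\cap C^c$ is simply not controlled. Moreover, even on $\overline{D}$ the invoked ``standard uniform consistency $\sup_{\overline{D}}|f_n-f|\to 0$ a.s.'' does not follow from (A): continuity of $f$ is guaranteed only on $U$, not inside $\interior(G(u+\zeta))$ (where part of $G(u)\cap C^c$ sits) nor on $\overline{D}\setminus U$, and without it the bias $\mathbb{E}f_n-f$ need not vanish uniformly. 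The repair is to argue through the decomposition $f_n=(f_n-\mathbb{E}f_n)+\mathbb{E}f_n$: the assumption (K1) (bounded support and finite variation of $K$) together with the bandwidth rate gives $\sup_{x\in\mathbb{R}^d}|f_n(x)-\mathbb{E}f_n(x)|\to 0$ almost surely, with no smoothness of $f$ and no boundedness of its support, while $\mathbb{E}f_n(x)=\int K(z)f(x-hz)\,dz$ is bounded below by $u+w$ on $G(u)\cap C^c$ and above by $l-w$ on $G(l)^c\cap C^c$ once $Rh<\upsilon/4$, using precisely the neighbourhood version of your separation step (this is where the $\upsilon/2$-collar in the definition of $C$ is really used; for a higher-order kernel taking negative values the bound on $\mathbb{E}f_n$ needs a little extra care). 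As written, your proof does not reach either of these two ingredients.
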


Proposition \ref{enPruebaTeorema3Walther4} corresponds to Lemma 2 (a) and (b) in \cite{r2}. It establishes interesting relationships between level sets with close enough thresholds. Lemma 2 (b) in \cite{r2} was slightly modified but the result remains true if the interval $[l,u]$ is replaced by $[l^{-},u^{+}]$ for certain $l^{-}<l$ and $u<u^{+}$.

\begin{proposition}\label{enPruebaTeorema3Walther4}Under assumption (A), there exists a constant $c>0$ such that
	\begin{itemize}
		\item[(a)]For all $t\in [l,u]$ and all $S>0$ such that $S\leq c$,
		$$ \sup_{G(t)^c\ominus S B_1[0]}f(x)<t-b_t$$for a certain $b_t>0$.
		\item[(b)] Let $l^{-}=l-\zeta/2$ and $u^{+}=u+\zeta/2$. If $t_1$ and $t_2$ are such that $l^{-}\leq t_1<t_2\leq u^{+}$ and $t_2-t_1\leq (m/2)c$ then
		\begin{equation*}
			G(t_1)\ominus B_{\frac{2}{m}(t_2-t_1)}[0]\subset G(t_2)\subset G(t_1)\ominus B_{\frac{1}{2m}(t_2-t_1)}[0]\end{equation*}and
		\begin{equation*}
			G(t_2)\oplus B_{\frac{1}{2m}(t_2-t_1)}[0]\subset G(t_1)\subset G(t_2)\oplus B_{\frac{2}{m}(t_2-t_1)}[0].\end{equation*}See assumption A for details on $\zeta$.
	\end{itemize}
\end{proposition}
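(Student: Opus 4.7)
The plan is to reduce the statement to Theorem \ref{principal} by controlling the error introduced by the data-driven threshold $\hat{f}_\tau$. Writing $L(\tau)=G(f_\tau)$, the triangle inequality splits the target distance as
$$d_H(C_{r_n(\hat{f}_\tau)}(\mathcal{X}_n^+(\hat{f}_\tau)),L(\tau))\leq d_H(C_{r_n(\hat{f}_\tau)}(\mathcal{X}_n^+(\hat{f}_\tau)),G(\hat{f}_\tau))+d_H(G(\hat{f}_\tau),G(f_\tau)),$$
so it suffices to (i) apply Theorem \ref{principal} at the random threshold $\hat{f}_\tau$ and (ii) bound $|\hat{f}_\tau-f_\tau|$, then convert that threshold error into a Hausdorff error. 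Under the assumption $l<f_{\underline{\tau}}$ and $f_{\overline{\tau}}<u$, once the errors are small we will have $\hat{f}_\tau\in[l,u]$ for every $\tau\in[\underline{\tau},\overline{\tau}]$, so that Theorem \ref{principal} applies and handles the first summand.

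The first step is to recall the two-sided sandwich that is actually delivered by the proof of Theorem \ref{principal}, namely that for $a_n=O(\max\{D_n,\epsilon_n\})$,
$$G(t)\ominus a_n B_1[0]\subset C_{r_n(t)}(\mathcal{X}_n^+(t))\subset G(t)\oplus a_n B_1[0],\quad \forall t\in[l,u],$$
almost surely, eventually. Combining this inclusion with the uniform Glivenko--Cantelli estimate over the class $\mathcal{G}_{G(l)}(m/k)$ from Proposition 1 in \cite{r2}, together with the Lipschitz continuity of $t\mapsto\mathbb{P}(G(t))$ (which follows from the bound $|\nabla f|\geq m$ via Proposition \textcolor{blue}{A.3}(b)), yields
$$\sup_{t\in[l,u]}\bigl|\mathbb{P}_n(C_{r_n(t)}(\mathcal{X}_n^+(t)))-\mathbb{P}(G(t))\bigr|=O(a_n),\quad\text{a.s.}$$
Because $\mathbb{P}(G(f_\tau))=1-\tau$ and $\hat{f}_\tau$ is the largest level at which the empirical mass of the estimator exceeds $1-\tau$, mimicking the threshold-inversion argument used in the proof of Theorem 3 of \cite{r2} (see equation (23) there) delivers the quantitative bound
$$\sup_{\tau\in[\underline{\tau},\overline{\tau}]}|\hat{f}_\tau-f_\tau|\leq M a_n,\quad\text{a.s., eventually,}$$
for some constant $M>0$.

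The final step is to convert the threshold bound into a Hausdorff bound on the sets. For $n$ large enough $Ma_n\leq (m/2)c$, so Proposition \textcolor{blue}{A.3}(b) gives the rolling-ball sandwich
$$L(\tau)\ominus\tfrac{2}{m}Ma_n B_1[0]\subset G(\hat{f}_\tau)\subset L(\tau)\oplus\tfrac{2}{m}Ma_n B_1[0],$$
uniformly in $\tau$, so $d_H(G(\hat{f}_\tau),L(\tau))=O(a_n)$. Combining with the sandwich for $C_{r_n(\hat{f}_\tau)}(\mathcal{X}_n^+(\hat{f}_\tau))$ around $G(\hat{f}_\tau)$ obtained from step one yields the two-sided inclusion
$$L(\tau)\ominus\bigl(\tfrac{2}{m}M+1\bigr)a_n B_1[0]\subset C_{r_n(\hat{f}_\tau)}(\mathcal{X}_n^+(\hat{f}_\tau))\subset L(\tau)\oplus\bigl(\tfrac{2}{m}M+1\bigr)a_n B_1[0],$$
which gives the stated Hausdorff rate. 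The Lebesgue version follows identically from this inclusion via Remark 1 in \cite{r2}, because both offsets are $O(a_n)$ and $L(\tau)$ is a fixed bounded set with smooth (rolling) boundary. The main obstacle is step two: one must pass from a uniform setwise approximation of $G(t)$ by $C_{r_n(t)}(\mathcal{X}_n^+(t))$ to a uniform \emph{measure} approximation, simultaneously controlling the empirical measure on a data-dependent family of $r$-convex sets and the Lipschitz modulus of the cumulative measure map; it is precisely here that the class $\mathcal{G}_{G(l)}(m/k)$, the gradient lower bound $|\nabla f|\geq m$, and the uniform consistency of $\hat{r}_0(t)$ from Theorem \ref{co} have to be combined.
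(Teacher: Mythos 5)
Your proposal does not prove the statement in question. The statement is Proposition \textcolor{blue}{A.3} (Appendix A): a purely deterministic, analytic fact about the density $f$ under assumption (A), namely that (a) on the eroded set $G(t)^c\ominus S B_1[0]$ the density stays strictly below $t$ by a margin $b_t$, and (b) level sets at nearby thresholds $t_1<t_2$ are sandwiched by erosions/dilations with radii $\frac{1}{2m}(t_2-t_1)$ and $\frac{2}{m}(t_2-t_1)$. What you have written is instead an argument for Theorem \ref{principal2} (the rate for $C_{r_n(\hat{f}_\tau)}(\mathcal{X}_n^+(\hat{f}_\tau))$ with the data-driven threshold $\hat{f}_\tau$): you decompose the Hausdorff distance via the triangle inequality, bound $|\hat{f}_\tau-f_\tau|$, and convert that into a set inclusion. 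Worse, your argument explicitly invokes ``Proposition \textcolor{blue}{A.3}(b)'' twice (to get Lipschitz continuity of $t\mapsto\mathbb{P}(G(t))$ and to obtain the rolling-ball sandwich around $L(\tau)$), so as a proof of the target statement it is circular: the very inclusions you are asked to establish are used as an ingredient.

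For the actual statement, the paper gives no new proof: it records that the proposition corresponds to Lemma 2(a) and (b) in Walther (1997) \cite{r2}, with the only new content being that part (b) remains valid when $[l,u]$ is enlarged to $[l-\zeta/2,u+\zeta/2]$, which is legitimate because assumption (A) guarantees $f\in\mathcal{C}^p(U)$ with $|\nabla f|\geq m>0$ and a Lipschitz gradient on a neighbourhood $U$ of $\overline{G(l-\zeta)}\setminus\interior(G(u+\zeta))$, so the hypotheses of Walther's lemma still hold on the slightly wider band of thresholds. A self-contained argument, if you want one, should work directly with the gradient bound: for $x\in\partial G(t_2)$ one follows the normalized gradient flow and uses $|\nabla f|\geq m$ (plus the Lipschitz condition with constant $k$ to keep the gradient direction under control over distances of order $c\approx m/k$) to show that $f$ increases at rate at least $m/2$ along a suitable segment, which yields the outer dilation $G(t_1)\subset G(t_2)\oplus B_{\frac{2}{m}(t_2-t_1)}[0]$ and, symmetrically, the inner erosion bounds; part (a) follows from the same one-dimensional mean-value estimate together with compactness of $G(t)^c\ominus S B_1[0]$ intersected with the relevant band. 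None of the stochastic machinery (empirical measures, $\hat{r}_0(t)$, Theorem \ref{co}, Proposition 1 of \cite{r2}) is relevant here.
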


Finally, Proposition \ref{enPruebaTeorema3Walther5} is presented. It corresponds to Lemma 3 in \cite{r2}.

\begin{proposition}\label{enPruebaTeorema3Walther5}Let $T\subset\mathbb{R}^d$ be a compact set, $r > 0$ and let $\mathcal{X}_n$ be a i.i.d. sample generated from a distribution with density function $f$. Let $ \mathcal{G}_T(r)$ be the family of sets defined in Definition \textcolor{blue}{6.3}.
	\begin{enumerate}
		\item If $f \geq b > 0$ on $A\in \mathcal{G}_T(r)$ and $0 < \epsilon< \min\{\overline{r}/2 , r\}$ then
		$$
		\mathbb{P}\left(A\oplus B_{\overline{r}-2\epsilon}[0]\nsubset (A\cap\mathcal{X}_n)\oplus B_{\overline{r}}[0]\right)$$
		$$\leq
		D\left(\epsilon,A\oplus B_{\overline{r}}[0]\right) \exp{ \left( -nab \min\{\overline{r}-\epsilon,r\}^{(d-1)/2}\epsilon^{(d+1)/2} \right) }.
		$$where$$D\left(\epsilon,A\oplus B_{\overline{r}}[0]\right)= \max\{card\mbox{ } V:V\subset A\oplus B_{\overline{r}}[0],\mbox{ }|x-y|>\epsilon\mbox{ for different }x,y\in V\}$$and $a$ is a dimensional constant.\vspace{.3mm}
		\item Further, if $f \geq b > 0$ on $T$, $0 < \epsilon< \min\{\overline{r}/3 , 1\}$ and $r\geq \overline{r}-2\epsilon$ then
		$$
		\mathbb{P}\left(A\oplus B_{\overline{r}-3\epsilon}[0]\nsubset (A\cap\mathcal{X}_n)\oplus B_{\overline{r}}[0] \mbox{ for some }A\in \mathcal{G}_T(r)\right)$$
		$$\leq
		D\left(\epsilon,T\oplus B_{\overline{r}}[0]\right)D\left(\frac{\epsilon}{10\overline{r}},S^{d-1}\right)\exp{\left(-nab(\overline{r}-2\epsilon)^{(d-1)/2}(\epsilon/2)^{(d+1)/2}\right)}
		$$where $S^{d-1}$ denotes the unit sphere.
	\end{enumerate}
	
\end{proposition}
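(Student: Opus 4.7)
\textbf{Proof plan for Proposition \ref{enPruebaTeorema3Walther5}.} The strategy is a covering argument combined with a geometric estimate on the mass of $A$ inside a suitable lens produced by the rolling condition. For part (1), I would fix $A\in\mathcal G_T(r)$ and take a maximal $\epsilon$-separated set $V\subset A\oplus B_{\overline r-2\epsilon}[0]$; then $V$ is an $\epsilon$-net with $|V|\le D(\epsilon,A\oplus B_{\overline r}[0])$. If the inclusion of part (1) fails, some $x\in A\oplus B_{\overline r-2\epsilon}[0]$ satisfies $A\cap B_{\overline r}[x]\cap\mathcal X_n=\emptyset$; choosing $v\in V$ with $|v-x|\le\epsilon$ and observing $B_{\overline r-\epsilon}[v]\subset B_{\overline r}[x]$ gives $A\cap B_{\overline r-\epsilon}[v]\cap\mathcal X_n=\emptyset$. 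A union bound over $V$ then reduces the task to lower-bounding $\mathbb P(X_1\in A\cap B_{\overline r-\epsilon}[v])$ uniformly in $v\in V$.

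The geometric step exploits the inside $r$-rolling property of $A$ at $a:=\pi_A(v)$, which produces a ball $B_r[c]\subset A$ with $c=a+r(a-v)/|a-v|$ (treating $v\notin A$; the interior case is easier) and $|c-v|\le(\overline r-2\epsilon)+r$. The lens $L:=B_r[c]\cap B_{\overline r-\epsilon}[v]\subset A$ then has overlap depth
\[
h:=r+(\overline r-\epsilon)-|c-v|\ge\epsilon,
\]
so the standard spherical-cap volume estimate gives $\mu(L)\ge a\min\{r,\overline r-\epsilon\}^{(d-1)/2}\epsilon^{(d+1)/2}$ with $a$ a dimensional constant. Since $f\ge b$ on $A\supset L$ and $(1-p)^n\le e^{-np}$, a union bound over $V$ produces the inequality in (1).

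For part (2) the new difficulty is that the center $c=c(A,v)$ depends on $A$, so a single net over $A\oplus B_{\overline r}[0]$ is not enough. I would introduce a common $\epsilon$-net $V\subset T\oplus B_{\overline r}[0]$ of size $D(\epsilon,T\oplus B_{\overline r}[0])$ together with an $(\epsilon/(10\overline r))$-net $\Xi\subset S^{d-1}$ over the possible inner-normal directions. Given a witness pair $(A,x)$ with $x\in A\oplus B_{\overline r-3\epsilon}[0]$, pick $v\in V$ within $\epsilon$ of $x$ and $\xi\in\Xi$ within $\epsilon/(10\overline r)$ of $(c-v)/|c-v|$. A direct calculation shows the perturbed center $\tilde c:=v+(|v-a|+r)\xi$ lies within $(|v-a|+r)\cdot\epsilon/(10\overline r)\le\epsilon/5$ of the true $c$, so $B_{r-\epsilon/5}[\tilde c]\subset B_r[c]\subset A$ and its lens with $B_{\overline r-\epsilon}[v]$ still has overlap depth $\ge\epsilon/2$. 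The hypothesis $r\ge\overline r-2\epsilon$ then forces $\min\{r-\epsilon/5,\overline r-\epsilon\}\ge\overline r-2\epsilon$, and repeating the spherical-cap estimate gives the probability at least $ab(\overline r-2\epsilon)^{(d-1)/2}(\epsilon/2)^{(d+1)/2}$. A union bound over $V\times\Xi$ then yields the announced inequality.

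The main obstacle I anticipate is the uniform geometric bookkeeping in part (2): verifying that the angular mesh $\epsilon/(10\overline r)$ really translates into an isotropic center-perturbation bounded by $\epsilon/5$ (the factor $1/10$ is calibrated precisely so that the loss sits safely inside the $\overline r-2\epsilon\mapsto\overline r-3\epsilon$ slack in the conclusion), and that the shrunken ball $B_{r-\epsilon/5}[\tilde c]$ is still contained in the true $B_r[c]\subset A$ uniformly in $A\in\mathcal G_T(r)$. Once the inclusion $\widetilde L(v,\xi)\subset A\cap B_{\overline r-\epsilon}[v]$ is established deterministically on the net, the rest of the argument is a mechanical repetition of the union-bound calculation of part (1).
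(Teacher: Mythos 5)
First, a point of reference: the paper offers no proof of this proposition at all — it is quoted as Lemma 3 of Walther (1997) (reference \cite{r2}) — so your attempt has to be judged as a reconstruction of Walther's argument. Your plan for part (1) is sound and is essentially that argument: maximal $\epsilon$-separated net in $A\oplus B_{\overline{r}-2\epsilon}[0]$, passage from a bad point $x$ to a net point $v$ via $B_{\overline{r}-\epsilon}[v]\subset B_{\overline{r}}[x]$, a lens $B_r[c]\cap B_{\overline{r}-\epsilon}[v]\subset A$ built from the inside rolling ball at a nearest point $a\in A$, the spherical-cap volume bound, and a union bound. (You do not even need the exact formula $c=a+r(a-v)/|a-v|$, whose justification would require the projection direction to be the normal and hence $d(v,A)<r$; the inequality $|c-v|\le r+|a-v|\le r+\overline{r}-2\epsilon$ already gives overlap depth at least $\epsilon$.)

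Part (2), however, has a genuine gap: the region you test, $B_{r-\epsilon/5}[\tilde c]\cap B_{\overline{r}-\epsilon}[v]$ with $\tilde c=v+(|v-a|+r)\xi$, is \emph{not} determined by the net pair $(v,\xi)$, because it still depends on the unknown witness $A$ through the scalar $t=|v-a|=d(v,A)\in[0,\overline{r}-2\epsilon]$ (and through $r$, on which only the lower bound $r\ge\overline{r}-2\epsilon$ is assumed). Hence the events you propose to union over form an uncountable family, and the counting factor $D(\epsilon,T\oplus B_{\overline{r}}[0])\,D\bigl(\epsilon/(10\overline{r}),S^{d-1}\bigr)$ does not bound their number; this is exactly the uniformity-in-$A$ difficulty you flagged, and discretizing the direction alone does not resolve it. A repair needs one more idea, for instance: first replace the rolling ball $B_r[c]$ by the tangent sub-ball of radius $\overline{r}-2\epsilon$ (legitimate since $r\ge\overline{r}-2\epsilon$, and this is why the final bound features $\overline{r}-2\epsilon$ rather than $r$), and then use that $t\mapsto|z-c(t)|$ is convex along the ray, so a point lying in the two extreme balls $B_{\overline{r}-2\epsilon}[v+(\overline{r}-2\epsilon)\xi]$ and $B_{\overline{r}-2\epsilon}[v+2(\overline{r}-2\epsilon)\xi]$ lies in $B_{\overline{r}-2\epsilon}[v+(t+\overline{r}-2\epsilon)\xi]$ for \emph{every} $t\in[0,\overline{r}-2\epsilon]$, in particular the true one; after shrinking radii by the angular-discretization error this yields a region depending only on $(v,\xi)$ that is contained in $A\cap B_{\overline{r}-\epsilon}[v]$ and has the required volume with depth about $\epsilon/2$. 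This normalization also fixes two smaller slips in your sketch: the estimate $(|v-a|+r)\epsilon/(10\overline{r})\le\epsilon/5$ fails when $r$ is large (no upper bound on $r$ is assumed), and $\min\{r-\epsilon/5,\overline{r}-\epsilon\}\ge\overline{r}-2\epsilon$ is false at $r=\overline{r}-2\epsilon$, so the stated hypothesis does not "force" it.
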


\section{Auxiliary mathematical results}\label{app2}

Next, some useful theoretical results to prove the consistency of the estimator $\hat{r}_0(t)$ are exposed. All of them are focused on finding or constructing balls which satisfying certain properties. This mathematical objects will play a fundamental role to get the consistency results. In a first step, an open ball which does not intersect $G(t)$ can be found in the $r-$convex hull of $G(t)$ for all $r>r_0(t)$, see Proposition \ref{Alambda}.

\begin{proposition}\label{Alambda}Let $G(t)$ be the compact, nonempty and nonconvex level set. Under (A), let $r_0(t) $ be as established in Definition  \textcolor{blue}{2.1} and $f$, the density function. Then, for all $r>r_0(t)$ there exists an open ball $A_t$ with radius $\rho_t$ centering at $c_t$ contained in the compact $C$ established in Proposition \ref{enPruebaTeorema3Walther} verifying
	\begin{equation}\label{distannnn}
		A_t\subset C_{r}(G(t)) \mbox{ and }	d(z,G(t))\geq  3\rho_{t},\mbox{ for all } z\in A_t.
	\end{equation}Therefore,
	\begin{equation}\label{estrela}
		A_t\cap B_{2\rho_t}(G(t))=\emptyset.
	\end{equation}
	In addition, $f(x)>l-\zeta$, for all $x\in A_t$.
\end{proposition}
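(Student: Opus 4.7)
The plan is to construct $A_t$ as a small open ball inside a ``lens'' of $C_r(G(t))\setminus G(t)$ hanging off a boundary point of $G(t)$ at which $G(t)$ fails to admit an $r$-ball from outside, such a point being guaranteed because $G(t)$ is not $r$-convex. Placing $A_t$ adjacent to some $b\in\partial G(t)\subset G(l)\setminus\interior(G(u))$ makes the inclusion $A_t\subset C$ automatic via Proposition \ref{enPruebaTeorema3Walther}, and the inequality $f>l-\zeta$ on $A_t$ follows from continuity of $f$ on $U$.

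The first (and main) step is to locate $p\in\interior(C_r(G(t)))\setminus G(t)$ lying arbitrarily close to some $b\in\partial G(t)$. Because the rolling property $(R^r_\lambda)$ forces the supremum in Definition \ref{r_0_t} to be attained, $G(t)$ is $r_0(t)$-convex and, since $r>r_0(t)$, $G(t)\subsetneq C_r(G(t))$. The Minkowski-closing representation $C_r(G(t))=(G(t)\oplus rB_1[0])\ominus rB_1[0]$, together with the compactness and boundary regularity of $G(t)$ supplied by $(R^r_\lambda)$, ensures that $C_r(G(t))$ is regular closed, i.e.\ equals the closure of its interior; hence $\interior(C_r(G(t)))\setminus G(t)\neq\emptyset$. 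Moreover, the boundary of each connected component of $C_r(G(t))\setminus G(t)$ is traced by arcs of balls of radius $r$ tangent to $\partial G(t)$, so such a component has $\partial G(t)$ in its closure, and for any prescribed $\alpha>0$ we can pick $p\in\interior(C_r(G(t)))\setminus G(t)$ with $\|p-b\|<\alpha$ for some $b\in\partial G(t)$, together with $\eta>0$ such that $B_\eta(p)\subset C_r(G(t))\setminus G(t)$.

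Once $p$ is fixed, set $c_t=p$, pick $\alpha<\min\{v/4,\delta_f/2\}$ where $v$ is the constant of Proposition \ref{enPruebaTeorema3Walther} and $\delta_f$ is a modulus of continuity of $f$ at $b$ ensuring $|f(x)-t|<\zeta/2$ on $B_{\delta_f}(b)$, and define
\[
\rho_t \;=\; \min\bigl\{\tfrac{1}{4} d(p,G(t)),\ \eta,\ v/4,\ \delta_f/2\bigr\}.
\]
Then $A_t=B_{\rho_t}(c_t)\subset B_\eta(p)\subset C_r(G(t))$; for $z\in A_t$ we have $d(z,G(t))\geq d(p,G(t))-\rho_t\geq 3\rho_t$, which is equivalent to $A_t\cap B_{2\rho_t}(G(t))=\emptyset$; $\|z-b\|\leq\alpha+\rho_t<v/2$ forces $A_t\subset C$; and finally $f(z)>t-\zeta/2\geq l-\zeta/2>l-\zeta$ throughout $A_t$.

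The principal difficulty lies in the existence argument of the second paragraph: rigorously extracting from $r>r_0(t)$ the existence of interior points of $C_r(G(t))\setminus G(t)$ arbitrarily close to $\partial G(t)$. This rests on the Minkowski-closing identity for $C_r$, the $(R^r_\lambda)$-regularity of $\partial G(t)$, and a local geometric analysis showing that the concavity defect responsible for the failure of $r$-convexity opens up a genuinely thick lens of $C_r(G(t))\setminus G(t)$ on its outer side.
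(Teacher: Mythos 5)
Your verification stage (given a point $p$ with $B_\eta(p)\subset C_r(G(t))\setminus G(t)$ and $\|p-b\|<\alpha$ for some $b\in\partial G(t)$) is sound, and in one respect cleaner than the paper's argument: by building $\tfrac14 d(p,G(t))$ into the definition of $\rho_t$ you get $d(z,G(t))\geq 3\rho_t$ directly, whereas the paper obtains this bound by pushing off $x_t\in\partial G(t)$ along the outward normal and implicitly invoking the outside rolling ball. The choices $\rho_t\leq v/4$, $\alpha<v/4$ for the inclusion $A_t\subset C$, and the continuity argument at $b$ (where $f(b)=t\geq l$) for $f>l-\zeta$ on $A_t$, are all fine.

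However, there is a genuine gap at the step you yourself flag as the principal difficulty, and it is precisely the mathematical content of the proposition. You need an \emph{interior} point of $C_r(G(t))$ that lies off $G(t)$ yet arbitrarily close to $\partial G(t)$, and you derive it from two assertions: (i) $C_r(G(t))$ is regular closed, and (ii) every component of $C_r(G(t))\setminus G(t)$ is a ``lens'' bounded by arcs of radius-$r$ balls tangent to $\partial G(t)$, hence accumulates at $\partial G(t)$. Neither is proved, and neither follows from the Minkowski-closing identity plus ($R_{\lambda}^r$) in any obvious way: the closing of a compact set can contain lower-dimensional pieces (degenerate bridges between components), and even granting (i), regular closedness only produces interior points near points of $C_r(G(t))\setminus G(t)$, not near $\partial G(t)$; closeness to $\partial G(t)$ is what you actually use to force $A_t\subset C$ and $f>l-\zeta$ on $A_t$, since a point of $C_r(G(t))\setminus G(t)$ deep inside a ``hole'' of $G(t)$ could lie outside the $\upsilon/2$-neighbourhood of $G(l)\setminus\interior(G(u))$ and in a region where $f<l-\zeta$. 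The paper does not leave this to a picture: it cites Lemma 8.3 of Rodr\'iguez-Casal and Saavedra-Nieves (2016) to produce a boundary point $x_t\in\partial G(t)\cap\interior(C_r(G(t)))$, and Proposition 2.2 there for the outward unit normal $\eta(x_t)$, and then takes $c_t=x_t+\tfrac{\epsilon_t^*}{2}\eta(x_t)$, so that adjacency to $\partial G(t)$ is automatic. To close your argument you would either have to invoke a result of that type or supply a proof of (i)--(ii) of comparable depth; as written, the central existence claim is asserted rather than established.
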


\begin{figure*}[h!]
	\begin{pspicture}(-1.7,-1.9)(10,5.3)
	\psccurve[showpoints=false,fillstyle=solid,fillcolor=white,linecolor=gray,linewidth=.75mm,linearc=3](1.5,0)(1,2.5)(1.5,5)(4,4.85)(7,4)(6.2,-0.05)(5,1.5)
	\psccurve[showpoints=false,fillstyle=solid,fillcolor=white,linecolor=white,linewidth=.85mm,linearc=3](1.5,0)(6.2,-0.05)(5,1.5)
	\psarc[showpoints=false,fillstyle=solid,fillcolor=white,linecolor=gray,linewidth=0.45mm,linearc=3](6.108,.7351) {.8}{264}{280}
	\psarc[showpoints=false,fillstyle=solid,fillcolor=red,linecolor=gray,linewidth=0.45mm,linearc=3](1.87,0.805) {.9}{240}{281}
	\psarc[showpoints=false,fillstyle=solid,fillcolor=white,linecolor=gray,linewidth=0.45mm,linearc=3](4.1,-9.867) {10}{78.9}{102}
	\psccurve[showpoints=false,fillstyle=solid,fillcolor=white,linecolor=black,linewidth=0.3mm,linearc=3](1.5,0)(1,2.5)(1.5,5)(4,4.85)(7,4)(6.2,-0.05)(5,1.5)
	
	\pscircle[linearc=0.25,linecolor=black,linewidth=0.2mm,linestyle=dashed,dash=3pt 3pt](4.8,1.53){1.}
	\pscircle[linearc=0.25,linecolor=gray,linewidth=0.2mm,linestyle=solid,dash=3pt 2pt,fillstyle=crosshatch*,fillcolor=gray,hatchcolor=white,hatchwidth=1.2pt,hatchsep=.5pt,hatchangle=0](4.8,1.03){.12}
	\rput(4.8,1.685){\small{$x_t$}}
	\rput(1.7,4){$G(t)$}
	\rput(4.8,2.83){\small{$B_{\epsilon^{*}_{t}}(x_t)$}}
	\rput(5.15,0.9){\small{$A_t$}}
	\psdots*[dotsize=3pt](4.8,1.53)
	\rput(4.75,-.8){$C_{r}(G(t))$}
	\end{pspicture}\vspace{-1cm}\caption{Elements of proof in Proposition \ref{Alambda}. }\label{oosssokkkdkd}
\end{figure*}
\begin{proof}

	Since $r>r_0(t)$, there exists $\epsilon>0$ such that ${r}=r_0(t)+\epsilon$. Taking into account Lemma 8.3 in \cite{r00000}, it is easy to prove that there exists $x_{t}\in\partial G(t)\cap \interior(C_{r}(G(t)))$. Therefore,
	$$\exists \epsilon_{t}>0\mbox{ such that }B_{\epsilon_{t}}(x_{t})\subset C_{r}(G(t)).$$
	

	Proposition \ref{enPruebaTeorema3Walther} guarantees that there exists $\upsilon>0$ and a compact set $C$ such that
	$$(G(l)\setminus \interior (G(u))\oplus B_{\frac{\upsilon}{2}}[0]\subset C.$$
	Since $x_t\in\partial G(t)$, it is verifies that $x_t\in G(l)\setminus\interior(G(u))$. Therefore, $$B_{\upsilon/2}(x_t)\subset (G(l)\setminus \interior(G(u)))\oplus\frac{\upsilon}{2} B_{1}[0]\subset C.$$In addition, if $\upsilon$ is small enough, we can assume without loss of generality that $$(G(l)\setminus \interior (G(u))\oplus \frac{\upsilon}{2} B_1[0] \subset G(l-\zeta/2)\setminus G(u+\zeta/2). $$
	
	Let $\epsilon_t^*=\min\{\epsilon_t, \upsilon/2\}$. Then, we define $A_t=B_{\rho_t}(c_t)$ where $\rho_t=\epsilon_t^*/8$, $c_t=x_t+\frac{\epsilon_{t}^*}{2}\eta(x_{t})$ and $\eta(x_t)$ denotes the outward pointing unit normal vector at $x_t$. Under (A), the existence of $\eta(x_t)$ is guaranteed, see proof of Proposition 2.2 in \cite{r00000}.

By construction, $A_t\subset B_{\epsilon_t^*}(x_t)\subset B_{\upsilon/2}(x_t)$ then it is verified that $A_t\subset C $ and $A_t\subset G(l-\zeta/2)\setminus G(u+\zeta/2)$. Then, $f(x)\geq l-\zeta/2>l-\zeta$ for all $x\in A_t$. In addition, given $z\in A_t$, it is verified that
$$	d(z,G(t))\geq d\left(x_t+\left(\frac{1}{2}-\frac{1}{8}\right)\epsilon_{t}^*\eta(x_t),x_t\right)=\frac{3}{8}\epsilon_{t}^*=3\rho_t>0.	 \qedhere$$\end{proof}

In Lemma \ref{contenidos}, if $r>r_0(t)$ then we find an open ball in $C_r(\mathcal{X}_n^+(t))$ that does not intersect $G(t)$ with probability one and for $n$ large enough. As consequence, it will be contained in $C_r( \mathcal{X}_n^+(\overline{t})$ for all $\overline{t}<t$.

\begin{lemma}\label{contenidos}Let $G(t)$ be a compact, nonempty and nonconvex level set. Under assumptions (A), (D) and (K), let $\mathcal{X}_n$ be a random sample generated by the density function $f$, let $r_0(t)$ be as established in Definition \textcolor{blue}{2.1} and let $\mathcal{X}_n^+(t)$ be as established in Definition \textcolor{blue}{2.2}. For $r>r_0(t)$ and $\overline{t}\in[l,u]$ verifying  $\overline{t}<t$, there exists an open ball $B_t$ of radius $\gamma_t\leq c/2$ such that $B_t\subset C$ where $C$ is the compact set established in Proposition \ref{enPruebaTeorema3Walther},$$B_t\cap G(t)=\emptyset,\mbox{ }B_t\subset C_{{r}}(G(t))$$and,
$$\mathbb{P}(B_t\subset C_{{ r}}(\mathcal{X}_n^+(\overline{t})), \mbox{ eventually})=1.$$In addition, it is verified that $f(x)>l-\zeta$ for all $x\in B_t$ and $d(z,G(t))\geq 6\gamma_t$, for all $z\in B_t$. See Proposition \ref{enPruebaTeorema3Walther4} for details about constant $c$.
\end{lemma}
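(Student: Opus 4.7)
My plan is to take $B_t$ as a concentric shrinking of the ball produced by Proposition B.1, and then to upgrade the deterministic containment $B_t \subset C_r(G(\overline{t}))$ (available because $\overline{t}<t$) to the empirical containment $B_t \subset C_r(\mathcal{X}_n^+(\overline{t}))$ using the uniform Hausdorff consistency of Corollary \ref{xeneralizacionsoporte}.

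I would first apply Proposition \ref{Alambda} with the given $r>r_0(t)$ to obtain an open ball $A_t=B_{\rho_t}(c_t)\subset C$ with $A_t\subset C_r(G(t))$, $d(z,G(t))\geq 3\rho_t$ for $z\in A_t$, and $f(x)>l-\zeta$ on $A_t$. Then I would set $B_t:=B_{\gamma_t}(c_t)$ with $\gamma_t:=\min\{\rho_t/2,\,c/2\}$, which yields $\gamma_t\leq c/2$ and a positive margin $\rho_t-\gamma_t\geq\rho_t/2$ between $B_t$ and $\partial A_t$. The deterministic conclusions---$B_t\subset C$, $B_t\subset C_r(G(t))$, $B_t\cap G(t)=\emptyset$, $d(z,G(t))\geq 3\rho_t\geq 6\gamma_t$ for $z\in B_t$, and $f>l-\zeta$ on $B_t$---all follow immediately from $B_t\subset A_t$ together with the properties of $A_t$.

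For the probabilistic claim, $\overline{t}<t$ gives $G(t)\subset G(\overline{t})$, so by monotonicity of $C_r$ we have $B_t\subset A_t\subset C_r(G(t))\subset C_r(G(\overline{t}))$. Using the morphological identity $C_r(A)=(A\oplus B_r[0])\ominus B_r[0]$, this is equivalent to $B_{\rho_t+r}[c_t]\subset G(\overline{t})\oplus B_r[0]$, whereas the goal is $B_{\gamma_t+r}[c_t]\subset \mathcal{X}_n^+(\overline{t})\oplus B_r[0]$. Corollary \ref{xeneralizacionsoporte} yields $G(\overline{t})\subset\mathcal{X}_n^+(\overline{t})\oplus B_{\epsilon_n}[0]$ with $\epsilon_n\downarrow 0$ uniformly in $\overline{t}$, a.s. I would then show that for every open ball $B_r(y)$ meeting $B_t$, the intersection $B_r(y)\cap G(\overline{t})$ contains an open sub-ball of uniformly positive radius $\eta$ depending only on $\rho_t-\gamma_t$ and $m/k$. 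Once this is in place, the density lower bound $f\geq l>0$ on $G(l)\supset G(\overline{t})$ combined with Borel--Cantelli (and Proposition \ref{mostramaisenGlambda} to locate the sample point in $\mathcal{X}_n^+(\overline{t})$, not merely in $\mathcal{X}_n$) forces $B_r(y)\cap \mathcal{X}_n^+(\overline{t})\neq\emptyset$ for all large $n$ a.s., with uniformity over the admissible $y$ handled by a finite cover argument as in Proposition \ref{alberto5}.

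The hard part is the sub-ball production: the naive Minkowski bound only yields $d(y,G(\overline{t}))\leq r$ for $y\in B_{\gamma_t+r}[c_t]$, which is insufficient to fit a sub-ball inside $B_r(y)\cap G(\overline{t})$. To bridge this gap I would invoke the rolling-ball property of $G(\overline{t})$ (a closed ball of radius $m/k$ rolls freely inside $G(\overline{t})$ uniformly for $\overline{t}\in[l,u]$, by Theorem 2 of \cite{r2}) at the closest boundary point $q\in\partial G(\overline{t})$ to $y$: moving distance $\eta$ from $q$ along the inward normal produces an interior point $p$ with $B_\eta(p)\subset G(\overline{t})$, and the margin $\rho_t-\gamma_t>0$ is exactly what guarantees that one can choose $y$'s closest boundary point sufficiently deep inside $B_r(y)$ to also ensure $B_\eta(p)\subset B_r(y)$. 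Uniformity of $\eta$ over $[l,u]$ is inherited from the uniform constants of Proposition \ref{Alambda} and the uniform rolling radius $m/k$.
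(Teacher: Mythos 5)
There is a genuine gap in the probabilistic step, and it stems from applying Proposition \ref{Alambda} with the given radius $r$ itself. With your construction you only know $B_t\subset C_r(G(t))\subset C_r(G(\overline{t}))$, i.e.\ every open $r$-ball whose center $y$ satisfies $d(y,B_t)<r$ merely \emph{meets} $G(\overline{t})$: the sliding trick with the margin $\rho_t-\gamma_t$ (moving the hull witness $z$ towards $y$ inside $A_t$) again only reproduces $d(y,G(\overline{t}))<r$, with no uniform gap below $r$, because membership of the slid point in a hull of the \emph{same} radius $r$ never forces the contact to be deep (compare: points of $C_r(S)$ for $S$ a sphere of radius close to $r$ lie at distance arbitrarily close to $r$ from $S$). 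Consequently the cap $B_r(y)\cap G(\overline{t})$ can be arbitrarily thin as $y$ ranges over the admissible centers, and your key claim --- that it contains a ball of a radius $\eta>0$ depending only on $\rho_t-\gamma_t$ and $m/k$ --- cannot be extracted from the stated ingredients; the rolling ball placed at the nearest boundary point does not help, since when the cap is thin the resulting interior ball of radius $\eta$ sticks out of $B_r(y)$. (A secondary soft spot: Proposition \ref{mostramaisenGlambda} gives $\mathcal{X}_n^+(\overline{t})\subset G(\overline{t})$, not the converse, so ``locating the sample point in $\mathcal{X}_n^+(\overline{t})$'' needs the sub-ball to sit at positive depth where $f\geq \overline{t}+2D_n$ together with the sup-norm bound on $C$; this is patchable, but it is not automatic.)

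The paper avoids all of this by putting the slack in the \emph{radius} rather than in the geometry of $B_t$: choose $r^*$ with $r_0(t)<r^*<r$ and apply Proposition \ref{Alambda} at $r^*$, so that $B_t\subset C_{r^*}(G(t))$ with $r^*$ strictly smaller than $r$. Then, for any $z\in B_t$ and any center $\omega\in B_r(z)$, the sliding argument (your margin idea, but now combined with $r<r^*+\rho_t^*/2$) shows $B_{r^*}(\omega)\cap G(t)\neq\emptyset$, i.e.\ $d(\omega,G(t))<r^*$; Corollary \ref{xeneralizacionsoporte} with tolerance $r-r^*>0$ then produces a point of $\mathcal{X}_n^+(t)$ within $r$ of $\omega$, so $B_t\subset C_r(\mathcal{X}_n^+(t))$ eventually, and the inclusion $\mathcal{X}_n^+(t)\subset\mathcal{X}_n^+(\overline{t})$ (valid since $\overline{t}<t$) finishes the proof without any Borel--Cantelli or covering argument for this step. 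Your deterministic conclusions ($B_t\subset C$, $B_t\cap G(t)=\emptyset$, $d(z,G(t))\geq 6\gamma_t$, $f>l-\zeta$ on $B_t$, and the truncation $\gamma_t\leq c/2$) are fine and coincide with the paper's; to repair the proof, replace your $A_t$ by the ball furnished by Proposition \ref{Alambda} at an intermediate radius $r^*\in(r_0(t),r)$ and use the Hausdorff consistency with the explicit tolerance $r-r^*$ as above.
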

\begin{figure*}[h!]
\begin{pspicture}(-1.7,-1.9)(10,7)
\rput(-1.2,-1.23){\scalebox{1.38}[1.5]{\psccurve[showpoints=false,fillstyle=solid,fillcolor=white,linecolor=black,linewidth=0.3mm,linearc=3](1.5,0)(1,2.5)(1.5,5)(4,4.85)(7,4)(6.2,-0.05)(4,0.1)}}
\psccurve[showpoints=false,fillstyle=solid,fillcolor=white,linecolor=gray,linewidth=.75mm,linearc=3](1.5,0)(1,2.5)(1.5,5)(4,4.85)(7,4)(6.2,-0.05)(5,1.5)
\psccurve[showpoints=false,fillstyle=solid,fillcolor=white,linecolor=white,linewidth=.85mm,linearc=3](1.5,0)(6.2,-0.05)(5,1.5)
\psarc[showpoints=false,fillstyle=solid,fillcolor=white,linecolor=gray,linewidth=0.45mm,linearc=3](6.108,.7351) {.8}{264}{280}
\psarc[showpoints=false,fillstyle=solid,fillcolor=red,linecolor=gray,linewidth=0.45mm,linearc=3](1.87,0.805) {.9}{240}{281}
\psarc[showpoints=false,fillstyle=solid,fillcolor=white,linecolor=gray,linewidth=0.45mm,linearc=3](4.1,-9.867) {10}{78.9}{102}
\psccurve[showpoints=false,fillstyle=solid,fillcolor=white,linecolor=black,linewidth=0.3mm,linearc=3](1.5,0)(1,2.5)(1.5,5)(4,4.85)(7,4)(6.2,-0.05)(5,1.5)

\pscircle[linearc=0.25,linecolor=black,linewidth=0.2mm,linestyle=solid,dash=3pt 2pt](4.8,1.03){.4}
\pscircle[linearc=0.25,linecolor=gray,linewidth=0.2mm,linestyle=solid,dash=1pt 1pt,fillstyle=crosshatch*,fillcolor=gray,hatchcolor=white,hatchwidth=1.2pt,hatchsep=.5pt,hatchangle=0](4.8,1.03){.2}
\rput(1.7,4){$G(t)$}
\rput(4,0.9){\small{$A^*_t$}}
\rput(4.75,-.4){$C_{r^*}(G(t))$}
\rput(9.6,3){$G(\overline{t})$}
\end{pspicture}\vspace{-.2cm}\caption{Elements of proof in Lemma \ref{contenidos}. $B_t$ in gray color. }\label{oosssokkkdkd}
\end{figure*}

\begin{proof} We will check that there exists an open ball $B_t$ verifying $$B_t\cap G(t)=\emptyset,\mbox{ } B_t\subset C_{{r}}(G(t))$$and
$$\mathbb{P}(B_t\subset C_{{ r}}(\mathcal{X}_n^+(t)), \mbox{ eventually})=1.$$ Since $\mathcal{X}_n^+(t)\subset\mathcal{X}_n^+(\overline{t})$ and, therefore, $ C_{{r}}(\mathcal{X}_n^+(t))\subset C_{{r}}(\mathcal{X}_n^+(\overline{t}))$.\\

Then, let $r^{*}$ be a positive number such that ${r}>r^*>r_0(t)$. Since $G(t)$ is $r_0(t)-$convex, it is verified that $C_{r_0(t)}(G(t))=G(t)\subsetneq C_{r^*}(G(t))$. According to Proposition \ref{Alambda}, there exists $A_t^*= B_{\rho_t^*}(c_t^*)$ satisfying
$$ A_t^*
\subset C_{r^*}(G(t))\subset C_{ r}(G(t)), \mbox{ }A_t^*\cap G(t)=\emptyset \mbox{ and }f(x)>\frac{t}{2},\mbox{ }\forall x\in  A_t^*.\vspace{1.4mm}$$
Since $A_t^{*}\subset C_{r^*}(G(t))$, for all $z\in A_t^{*}$ it is verified that
\begin{equation}\label{de}
	B_{r^*}(w)\cap G(t)\neq \emptyset,\mbox{ }\forall \omega \in B_{r^*}(z).\vspace{1.4mm}
\end{equation}
Let $B_t$ be an open ball of radius $\gamma_t=\rho_t^*/2$ centering at $c_t^*$. Then, $f(x)>l-\zeta$ for all $x\in B_t$  because $B_t\subset A_t^{*}$. In addition, Proposition \ref{Alambda} guarantees that $  A_t^{*}\subset C$ and, therefore, $  B_t\subset C$.\\

Without loss of generality, we assume that ${r}<\frac{\rho_t^*}{2}+r^*$. In fact, if we suppose ${r}\geq \frac{\rho_t^*}{2}+r^*$ then it is possible to consider $r^{**}>r^*$ verifying that ${r}>r^{**}>r_0(t)$ and $r<\frac{\rho_t^*}{2}+r^{**}$. For this $r^{**}$,
$$A_t^{*}\subset C_{r^*}(G(t))\subset  C_{r^{**}}(G(t)) \mbox{ and } A_t^{*}\cap G(t)=\emptyset.\vspace{1.4mm}$$Therefore, it would be enough to consider $r^*=r^{**}$.\\$ $\\
Three steps will be considered for obtaining the first part of the proof.\\$ $\\  	
\underline{Step 1:} If $z\in B_t$ and $\omega\in B_{\frac{\rho_t^*}{2}+r^*}(z)$ then $B_{r^{*}}(\omega)\cap G(t)\neq\emptyset$.\\Then, let $z\in B_t$ and $\omega\in B_{\frac{\rho_t^*}{2}+r^*}(z)$:
\begin{enumerate}
	\item If $\omega\in B_{r^*}(z)$ then, according to equation (\ref{de}), $B_{r^{*}}(\omega)\cap G(t)\neq\emptyset$.
	\item If $\omega\notin B_{r^*}(z)$ then $\omega\in B_{r^*+\frac{\rho_t^*}{2}}(z)\setminus B_{r^*}(z)$. Let $\overline{z}\in [w,z]$
	such that $\|z-\overline{z}\|=\frac{\rho_t^*}{2}$ then $\|\omega-\overline{z}\|<r^*$. Therefore, $\overline{z}\in A_t^{*}\subset C_{r^*}(G(t))$ because $\|\overline{z}-c_t^*\|\leq\|\overline{z}-z\|+\|z-c_t^*\|<\frac{\rho_t^*}{2}+\frac{\rho_t^*}{2}=\rho_t^*$. Since $\overline{z}\in C_{r^*}(G(t))$, according to equation (\ref{de}),
	$$ B_{r^*}(\overline{w})\cap G(t)\neq \emptyset,\mbox{ }\forall \overline{w} \in B_{r^*}(\overline{z}).\vspace{1.4mm}$$Since $\omega \in B_{r^*}(\overline{z})$, $B_{r^*}(w)\cap G(t)\neq \emptyset$.\\$ $\\
\end{enumerate}\underline{Step 2:} If ${r}<r^*+\frac{\rho_t^*}{2}$ and $z\in B_t$ then $B_{r^*}(w)\cap G(t)\neq \emptyset$, $\forall \omega\in B_{{r}}(z)$.\\Of course, if ${r}<r^*+\frac{\rho_t^*}{2}$ and $\omega\in B_r(z)$ then $\omega\in B_{r^*+\frac{\rho_t^*}{2}}(z)$ and, according to Step 1, $B_{r^*}(\omega)\cap G(t)\neq \emptyset$.\\$ $\\  
\underline{Step 3:} If ${r}<r^*+\frac{\rho_t^*}{2}$, with probability one, $$\exists n_0\in \mathbb{N} \mbox{ such that } B_{t}\subset C_{{r}}(\mathcal{X}_n^+(t)), \mbox{ } \forall n\geq n_0.$$Let $z\in B_{t}$.\\According to Step 2,
$$B_{r^*}(\omega)\cap G(t)\neq \emptyset,\mbox{ }\forall \omega \in B_{{r}}(z).\vspace{1.4mm}$$We will prove that, with probability one,
$$\exists n_0\in  \mathbb{N}\mbox{ such that } B_{{r}}(\omega)\cap \mathcal{X}_n^+(t)\neq \emptyset, \mbox{ }\forall \omega \in B_{{r}}(z)\mbox{ and }\forall n\geq n_0.\vspace{1.4mm} $$Let $\omega \in B_{{r}}(z)$ and $s\in B_{r^*}(\omega)\cap G(t)$. According to Corollary  \textcolor{blue}{2.1}, with probability one,
$$\exists n_0\in  \mathbb{N}\mbox{ such that } d_H(\mathcal{X}_n^+(t),G(t))<{r}-r^*, \mbox{ }\forall n\geq n_0.\vspace{1.4mm}$$Then, with probability one and for $n$ large enough,  	
$$\exists x_n\in \mathcal{X}_n^+(t) \mbox{ such that }\|x_n-s\|<{r}-r^*$$and, therefore,  	
$$\|x_n -\omega\|\leq\|x_n-s\|+\|s-\omega\|<{r}-r^*+r^*={r}.\vspace{1.4mm} $$Then,
$$\forall w \in B_r(z),\mbox{ } B_r(w)\cap \mathcal{X}_n^+(t)\neq \emptyset.$$As consequence,
$B_t\subset C_r(\mathcal{X}_n^+(t))$.\\  	
Finally, by construction, $d(z,G(t))\geq 3\rho^*_t$ for all $z\in A_t^*$. Since $B_t\subset A_t^* $ it is verified that $d(z,G(t))\geq 6\gamma_t$, for all $z\in B_t$.\\


If $\gamma_t\leq c/2 $ then the proof is fisnished. If this condition is not satisfied, it is enough to redefine the radius $\gamma_t=c/2$. The new ball $B_t=B_{c/2}(c_t^*)$ would be inside $B_{\rho^*_t/2 }(c_t^*)$. Therefore, it would satisfy all properties required.\qedhere \end{proof}

With probability one and for $n$ large enough, the open ball $B_t$ constructed in Lemma \ref{contenidos} intersects $\mathcal{X}_n^-(\overline{t})$ for $\overline{t}<t$, see Lemma \ref{jijo}.

\begin{lemma}\label{jijo}Let $G(t)$ be a compact, nonempty and nonconvex level set. Under assumptions (A), (D) and (K), let $\mathcal{X}_n$ be a random sample generated by the density function $f$, let $r_0(t) $ be as established in Definition \textcolor{blue}{2.1} and let $\mathcal{X}_n^-(t)$ be as established in Definition \textcolor{blue}{2.2}. Let $B_t$ be the open ball of radius $\gamma_t$ established in Lemma \ref{contenidos}. For $\overline{t}\in[l,u]$ verifying $\overline{t}<t$ and $t-\overline{t}\leq  (mc)/2 $. Then,
$$\mathbb{P}(\mathcal{X}_n^-(\overline{t})\cap B_t\neq \emptyset, \mbox{ eventually})=1.$$See Proposition \ref{enPruebaTeorema3Walther4} for details about constant $c$. \end{lemma}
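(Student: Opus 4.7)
The plan is to produce, with probability one for $n$ large, a sample point $X_i \in B_t$ satisfying $f_n(X_i) < \overline{t} - D_n$, which is precisely the defining condition for membership in $\mathcal{X}_n^-(\overline{t})$. The argument has three ingredients: a deterministic upper bound on $f$ inside $B_t$, the uniform kernel convergence on the compact $C$ from Proposition \ref{enPruebaTeorema3Walther}, and a Borel--Cantelli step to guarantee that $\mathcal{X}_n$ visits $B_t$ eventually.

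For the geometry, Lemma \ref{contenidos} gives $d(z,G(t)) \geq 6\gamma_t$ for every $z \in B_t$. Since $t - \overline{t} \leq mc/2$, Proposition \ref{enPruebaTeorema3Walther4}(b) with $t_1=\overline{t}$, $t_2=t$ yields $G(\overline{t}) \subset G(t) \oplus B_{(2/m)(t-\overline{t})}[0]$, and subtracting distances gives $d(z,G(\overline{t})) \geq 6\gamma_t - (2/m)(t-\overline{t})$ for every $z \in B_t$. Combined with the sharper bound $t - \overline{t} \leq m\gamma_t$ under which the lemma is actually invoked in Theorem \ref{co}, this separation is at least $4\gamma_t$. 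Choosing $S = \min\{4\gamma_t, c\}$, Proposition \ref{enPruebaTeorema3Walther4}(a) applied at the threshold $\overline{t}$ supplies a constant $b > 0$ with $\sup_{B_t} f < \overline{t} - b$. Because $B_t \subset C$ by Lemma \ref{contenidos}, and $M \geq N$ in hypothesis (D) ensures $\sup_C |f_n - f| \leq D_n$ almost surely for $n$ large, the fact that $D_n \to 0$ gives $2D_n < b$ eventually, so $f_n(z) < \overline{t} - b + D_n < \overline{t} - D_n$ uniformly over $z \in B_t$. In particular, every sample point in $B_t$ lies in $\mathcal{X}_n^-(\overline{t})$.

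It remains to check that $\mathcal{X}_n \cap B_t \neq \emptyset$ eventually, almost surely. Lemma \ref{contenidos} also asserts $f > l - \zeta > 0$ on $B_t$ (shrinking $\zeta$ if needed, as is standard under (A)), so $p := \mathbb{P}(X_1 \in B_t) \geq (l-\zeta)\,\mu(B_t) > 0$, and
$$\sum_{n=1}^\infty \mathbb{P}(\mathcal{X}_n \cap B_t = \emptyset) \leq \sum_{n=1}^\infty e^{-np} < \infty,$$
so Borel--Cantelli settles the claim and, together with the deterministic bound, yields $\mathcal{X}_n^-(\overline{t}) \cap B_t \neq \emptyset$ eventually. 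The main obstacle is the first step: extracting a strictly positive deterministic margin $b$ between $\sup_{B_t} f$ and $\overline{t}$ that can absorb both $D_n$ error terms (one from $f_n - f$ on $C$, one from the strict inequality defining $\mathcal{X}_n^-(\overline{t})$). The constraint $S \leq c$ in Proposition \ref{enPruebaTeorema3Walther4}(a), together with the quantitative separation $6\gamma_t - (2/m)(t-\overline{t})$, is exactly what forces the implicit refinement of $t - \overline{t} \leq mc/2$ to $t - \overline{t} \leq m\gamma_t$ in the way the lemma is applied; once this geometric margin is pinned down the stochastic tail is routine.
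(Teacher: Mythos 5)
Your proposal is correct and takes essentially the same route as the paper's proof: separation of $B_t$ from $G(\overline{t})$ via the bound $d(z,G(t))\geq 6\gamma_t$ of Lemma \ref{contenidos} together with Proposition \ref{enPruebaTeorema3Walther4}(b), a deterministic margin $b_t>0$ from Proposition \ref{enPruebaTeorema3Walther4}(a) (the paper takes $S=2\gamma_t\leq c$, which sidesteps the borderline case $S=4\gamma_t$ in your choice $S=\min\{4\gamma_t,c\}$), then uniform kernel convergence on $C$ plus $D_n\to 0$, and Borel--Cantelli to get $\mathcal{X}_n\cap B_t\neq\emptyset$ eventually. Your remark that the argument really uses $t-\overline{t}\leq m\gamma_t$ (as when the lemma is invoked in Theorem \ref{co}) rather than only $t-\overline{t}\leq mc/2$ mirrors the paper's own proof, which likewise relies on $\tfrac{2}{m}(t-\overline{t})<2\gamma_t$.
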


\begin{proof}It is easy to prove that $\mathbb{P}(\mathcal{X}_n\cap B_t\neq\emptyset, \mbox{ eventually})=1$ using Borel-Cantelli's Lemma. Since	
$$\mathbb{P}(\mathcal{X}_n\cap B_t=\emptyset)=\left[1-\mathbb{P}(X_1\in B_t)\right]^n\leq  e^{-n\mathbb{P}(X_1\in B_t)}$$and $f(x)>l-\zeta>0$ for all $x\in B_t$, it is satisfied that	
\begin{equation}
	\mathbb{P}(X \in B_t)=\int_{B_t}f\mbox{ }d\mu \geq \int_{B_t}l-\zeta\mbox{ }d\mu=(l-\zeta)\mu(B_t)>0
\end{equation}and, as consequence,
$$\sum_{n=1}^\infty \mathbb{P}(\mathcal{X}_n\cap B_t=\emptyset)\leq \sum_{n=1}^\infty e^{-n(l-\zeta)\mu(B_t)}<\infty.$$

Next, an analogous result for $\mathcal{X}_n^-(\overline{t})$ must be obtained. Since $\frac{2}{m}(t-\overline{t})<2\gamma_t$, Proposition \ref{enPruebaTeorema3Walther4} (b) ensures that
\begin{equation}\label{lewl2bb}
	G(\overline{t})\subset G(t)\oplus\frac{2}{m}(t-\overline{t})B_1[0]\subset G(t)\oplus 2\gamma_t B_1[0].
\end{equation}Then,
\begin{equation}\label{lewl2bbc}
	B_t\subset G(\overline{t})^c\ominus 2\gamma_t B_1[0].
\end{equation}Therefore,
$$\forall y\in B_t,\mbox{ }d(y,G(t))\geq 6\gamma_t>0 \mbox{ and }d(y,G(t)\oplus  2\gamma_t B_1[0])\geq  4\gamma_t>0.$$According to equation (\ref{lewl2bb}),
$$\forall y\in B_t,\mbox{ }d(y,G(\overline{t}))\geq 4\gamma_t>2\gamma_t.$$Therefore,
$$ B_t\subset G(\overline{t})^c\ominus 2\gamma_t B_1[0].$$
Since  $0<2\gamma_t\leq c$, Proposition \ref{enPruebaTeorema3Walther4} (a) guarantees that
$$
\sup_{x\in G(\overline{t})^c\ominus 2\gamma_t B_1[0] } f(x)<\overline{t}-b_t\mbox{ for a certain }b_t>0.
$$As consequence,
\begin{equation}\label{hjuyt}
	\sup_{x\in B_t } f(x)<\overline{t}-b_t\mbox{ for a certain }b_t>0.
\end{equation}
The first part of the proof ensures that, with probability one,
\begin{equation}\label{hg}
	\exists n_0\in \mathbb{N}\mbox{ such that }\exists X_i\in\mathcal{X}_n\cap B_t,\mbox{ }\forall n\geq n_0 .
\end{equation}
According to Lemma \ref{contenidos}, $B_t\subset C$ where $C$ is the compact set established in Proposition \ref{enPruebaTeorema3Walther}. Then, $X_i\in C$ and Proposition \ref{enPruebaTeorema3Walther}, guarantees that, with probability one and for $n$ large enough,
$$
\exists N>0 \mbox{ tal que }\sup_{C}|f_n-f|\leq N  \left(  \frac{\log{n}}{n}\right)^{p/(d+2p)} .
$$If $D_n=N  \left(  \frac{\log{n}}{n}\right)^{p/(d+2p)}$ then $\lim_{n\rightarrow \infty}D_n=0$. Therefore, fixed $b_t/2>0$,
$$\exists n_1\in\mathbb{N}\mbox{ such that } D_n<\frac{b_t}{2}, \forall n\geq n_1.$$Since $X_i\in C$, with probability one,
$$|f_n(X_i)-f(X_i)|\leq \sup_{C}|f_n-f|\leq D_n<\frac{b_t}{2},\mbox{ }\forall n\geq\max\{n_0,n_1\}.$$Then, $X_i \in \mathcal{X}_n^-(\overline{t})$ because for all $ n\geq\max\{n_0,n_1\}$,
\[f_n(X_i)\leq f(X_i)+D_n<\overline{t}-b_t+D_n<\overline{t}-b_t+\frac{b_t}{2}=\overline{t}-\frac{b_t}{2}\leq \overline{t}-D_n.\qedhere\]
\end{proof}

Next, it is proved that a ball contained in the the open ball $A_t$ constructed in Proposition \ref{Alambda} is inside the $(r+\epsilon)-$convex hull of $G(\overline{t})$ for fixed values of $\epsilon>0$, $\overline{t}>t$ and $r>r_0(t)$.

\begin{proposition}\label{widetilde}Let $G(t)$ be a compact, nonempty and nonconvex level set. Under assumptions (A), let $r_0(t) $ be as established in Definition \textcolor{blue}{2.1} and $\epsilon>0$. Given $r>0$ verifying $r>r_0(t)$, let $A_t$ be the open ball of radius $\rho_t$ centering at $c_t$ constructed in Proposition \ref{Alambda}.
For $\overline{t}>t$ with $\overline{t}\in[l,u]$ and $\overline{t}-t\leq\min\{\epsilon m/2,mc/2\}$, it is verified that $$D_t=B_{\rho_t/2}(c_t)\subset C_{{r}+\epsilon}(G(\overline{t})).$$See Proposition \ref{enPruebaTeorema3Walther4} for details about constant $c$.

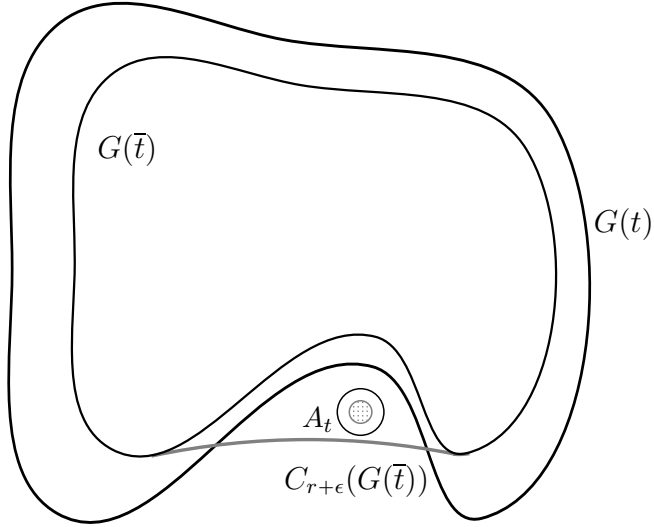
\begin{figure*}[h!]
	\hspace{2cm}\begin{pspicture}(-1.7,-1.9)(10,6.2)
	\psarc[showpoints=false,fillstyle=solid,fillcolor=white,linecolor=gray,linewidth=0.45mm,linearc=3](6.108,.7351) {.8}{264}{280}

	\rput(-1.1,-.85){	\scalebox{1.2}[1.3]{	 \psccurve[showpoints=false,fillstyle=none,fillcolor=white,linecolor=black,linewidth=0.3mm,linearc=3](1.5,0)(1,2.5)(1.5,5)(4,4.85)(7,4)(6.2,-0.05)(5,1.5)}}
	\psarc[showpoints=false,fillstyle=none,fillcolor=white,linecolor=gray,linewidth=0.45mm,linearc=3](4.1,-9.867) {10}{78.9}{102}
	
	\psccurve[showpoints=false,fillstyle=none,fillcolor=white,linecolor=black,linewidth=0.3mm,linearc=3](1.5,0)(1,2.5)(1.5,5)(4,4.85)(7,4)(6.2,-0.05)(5,1.5)
	
	\pscircle[linearc=0.25,linecolor=black,linewidth=0.2mm,linestyle=solid,dash=3pt 2pt](4.8,.5){.32}
	\pscircle[linearc=0.25,linecolor=gray,linewidth=0.2mm,linestyle=solid,dash=3pt 2pt,fillstyle=crosshatch*,fillcolor=gray,hatchcolor=white,hatchwidth=1.2pt,hatchsep=.5pt,hatchangle=0](4.8,.5){.16}
	\rput(1.7,4){$G(\overline{t})$}
	\rput(4.23,0.4){\small{$A_t$}}
	\rput(4.75,-.4){$C_{r+\epsilon}(G(\overline{t}))$}
	\rput(8.3,3){$G(t)$}
	\end{pspicture}\vspace{-.2cm}\caption{Elements of proof in Proposition \ref{widetilde}. $D_t$ in gray color. }\label{oosssokkkdkd}
\end{figure*}

\end{proposition}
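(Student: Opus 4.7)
The plan is to reduce the claim to a purely morphological containment and then verify that containment by chaining three elementary inclusions. From Section~\ref{1}, the $r$-convex hull admits the opening identity $C_s(A)=(A\oplus sB_1[0])\ominus sB_1[0]$; by the standard adjunction between dilation and erosion, the desired statement $D_t\subset C_{r+\epsilon}(G(\overline{t}))$ is equivalent to
\[
D_t\oplus (r+\epsilon)B_1[0]\;\subset\; G(\overline{t})\oplus (r+\epsilon)B_1[0],
\]
which is what I would actually prove.

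To establish this I would assemble three links. First, since $D_t=B_{\rho_t/2}(c_t)\subset B_{\rho_t}(c_t)=A_t$ are concentric balls separated by a margin of $\rho_t/2$ in radius, a direct computation with Minkowski sums of balls yields
\[
D_t\oplus (r+\epsilon)B_1[0]\;\subset\; A_t\oplus rB_1[0],
\]
provided $\epsilon\leq \rho_t/2$; I would include this as a small-$\epsilon$ proviso (consistent with how the lemma is invoked later in the Case~2 portion of the proof of Theorem~\ref{co}) or absorb it by taking $\rho_t$ small enough in the construction of Proposition~\ref{Alambda}. Second, the hypothesis $A_t\subset C_r(G(t))$ is, via the opening identity read in reverse, exactly
\[
A_t\oplus rB_1[0]\;\subset\; G(t)\oplus rB_1[0].
\]
Third, applying Proposition~\ref{enPruebaTeorema3Walther4}(b) to the pair $t<\overline{t}$ with $\overline{t}-t\leq mc/2$ gives $G(t)\subset G(\overline{t})\oplus B_{\frac{2}{m}(\overline{t}-t)}[0]$, and the second bound $\overline{t}-t\leq \epsilon m/2$ then sharpens this to $G(t)\subset G(\overline{t})\oplus \epsilon B_1[0]$, whence
\[
G(t)\oplus rB_1[0]\;\subset\; G(\overline{t})\oplus (r+\epsilon)B_1[0].
\]

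Concatenating the three inclusions delivers the required containment $D_t\oplus (r+\epsilon)B_1[0]\subset G(\overline{t})\oplus (r+\epsilon)B_1[0]$, and the opening identity translates it back to $D_t\subset C_{r+\epsilon}(G(\overline{t}))$. The main obstacle is the first link: it is precisely the slack $\rho_t/2$ between $D_t$ and $A_t$ that absorbs the perturbation $\epsilon$, and this is exactly why the lemma introduces the shrunken ball $D_t=B_{\rho_t/2}(c_t)$ rather than working with $A_t$ directly. The other two inclusions are essentially bookkeeping once the dilation/erosion adjunction is in place.
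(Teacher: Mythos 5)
Your argument is correct and reaches the same conclusion from the same three ingredients as the paper, but by a different mechanism. The paper proves $D_t\subset C_{r+\epsilon}(G(\overline{t}))$ pointwise: it takes $x\in D_t$ and an arbitrary ball $B_{r+\epsilon}(c)\ni x$, splits into the cases $x\in B_r(c)$ and $x\notin B_r(c)$, uses the slack $\rho_t/2$ (after assuming w.l.o.g.\ $\epsilon\leq\rho_t/2$) to find a point of $B_r(c)\cap C_r(G(t))$, and then uses $d_H(G(t),G(\overline{t}))\leq\frac{2}{m}(\overline{t}-t)\leq\epsilon$ from Proposition \ref{enPruebaTeorema3Walther4}(b) to pass from $G(t)$ to $G(\overline{t})$ inside the enlarged ball. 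You instead invoke the opening identity $C_s(A)=(A\oplus sB)\ominus sB$ and the dilation--erosion adjunction to reduce the claim to $D_t\oplus(r+\epsilon)B\subset G(\overline{t})\oplus(r+\epsilon)B$, which you obtain by chaining $D_t\oplus(r+\epsilon)B\subset A_t\oplus rB$ (the slack, again requiring $\epsilon\leq\rho_t/2$), $A_t\oplus rB\subset G(t)\oplus rB$ (the adjunct form of $A_t\subset C_r(G(t))$ from Proposition \ref{Alambda}), and $G(t)\oplus rB\subset G(\overline{t})\oplus(r+\epsilon)B$ (Proposition \ref{enPruebaTeorema3Walther4}(b) with $\overline{t}-t\leq\epsilon m/2$). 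This buys a shorter, case-free proof in which the roles of the two hypotheses are transparent, at the modest cost of relying on the morphological identity and keeping the open/closed structuring elements consistent throughout the chain. Your restriction $\epsilon\leq\rho_t/2$ is exactly the reduction the paper itself makes, so it is not a gap; however, your parenthetical suggestion to ``absorb it by taking $\rho_t$ small enough'' goes the wrong way, since shrinking $\rho_t$ only tightens the requirement $\epsilon\leq\rho_t/2$ -- the appropriate way to treat large $\epsilon$ is the paper's: prove the statement for $\overline{\epsilon}=\min\{\epsilon,\rho_t/2\}$ and use the monotonicity $C_{r+\overline{\epsilon}}(G(\overline{t}))\subset C_{r+\epsilon}(G(\overline{t}))$.
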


\begin{proof}Proposition \ref{Alambda} ensures that there exists an open ball $A_t=B_{\rho_t}(c_t)$ verifying that
$$
A_t\cap B_{2\rho_t}(G(t))=\emptyset,\mbox{ }G(t)\cap A_t=\emptyset\mbox{ and }A_t\subset C_{r}(G(t)).$$Since $\overline{t}>t$, it is verified that $$A_t\cap G(\overline{t})=\emptyset.$$ 	 We can assume, without loss of generality, that $\epsilon\leq \rho_t/2$. In another case, the proof will be done considering $\overline{\epsilon}=\rho_t/2>0$. In this case, it would be verify that $$D_t\subset C_{({r}+\overline{\epsilon})}(G(\overline{t})) \subset C_{({r}+\epsilon)}(G(\overline{t})).$$
According to Proposition \ref{enPruebaTeorema3Walther4} (b), if $ \overline{t}-t\leq\frac{mc}{2}$ then
$$
G(t)\subset G(\overline{t})\oplus\frac{2}{m}(\overline{t}-t)B_1[0].
$$In addition, $G(\overline{t})\subset G(t)$. Therefore,
$$G(\overline{t}) \subset G(t)\subset G(\overline{t})\oplus\frac{2}{m}(\overline{t}-t)B_1[0]$$and, as consequence,
$$d_H(G(t),G(\overline{t}))\leq \frac{2}{m}(\overline{t}-t).$$Since $\overline{t}-t\leq \epsilon m/2$ then
\begin{equation}\label{ioio}
	d_H(G(t),G(\overline{t}))\leq \frac{2}{m}(\overline{t}-t)\leq \epsilon.
\end{equation}
$ $\\
Next, we will prove that $D_t=B_{\rho_t/2}(c_t)\subset C_{(r+\epsilon)}(G(\overline{t}))$.\\Let $x\in D_t\subset C_{r}(G(t))$ and let $B_{r+\epsilon}(c)$ be an arbitrary ball such that $x\in B_{r+\epsilon}(c)$. We will check that $B_{r+\epsilon}(c)\cap G(\overline{t})\neq\emptyset$. Two cases will be distinguished:

\begin{enumerate}
	\item Let assume that $x\in B_{r}(c)$. Since $x\in B_{r}(c)\cap C_{r}(G(t))$, $B_{r}(c)\cap G(t)\neq \emptyset$,
	$$\exists y\in G(t)\cap B_{r}(c).$$According to equation (\ref{ioio}),
	$$\exists z\in G(\overline{t})\cap B_{\epsilon}(y).$$By construction, $z\in B_{{r}+\epsilon}(c)$. Therefore,
	$$B_{{r}+\epsilon}(c)\cap G(\overline{t})\neq\emptyset.$$

	\item Let assume that $x \notin B_{r}(c)$. Taking into account the construction of
	$D_t$, we can guarantee that
	$$D_t\oplus \rho_t/2 B_1[0]\subset A_t \subset C_{r}(G(t)).$$Then, since $x\in D_t$,  $\epsilon\leq \rho_t/2<\rho_t$ and  $d(x,B_{{r}}(c))<\epsilon$,
	$$\exists y\in  B_{{r}}(c)\cap B_{\rho_t/2}(x)\cap C_{{r}}(G(t)).$$Therefore,
	$$\exists y\in  B_{{r}}(c)\cap C_{{r}}(G(t)).$$Then,
	$$\exists z\in  B_{{r}}(c)\cap G(t).$$According to equation (\ref{ioio}),
	$$\exists t \in  B_{{\epsilon}}(z)\cap G(\overline{t}).$$By construction, $t\in B _{{{r}}+{\epsilon}}(c)$. Therefore,
	$$B _{{{r}}+{\epsilon}}(c)\cap G(\overline{t})\neq\emptyset.\qedhere $$

\end{enumerate}

\end{proof}

Similar results to the previous ones are proved in Proposition \ref{bolassss} and Lemma \ref{jijoju}.

\begin{proposition}\label{bolassss}Let $G(t)$ be a compact, nonempty and nonconvex level set. Under assumptions (A), (D) and (K), let $\mathcal{X}_n$ be a random sample generated by the density function $f$, let $r_0(t) $ be as established in Definition \textcolor{blue}{2.1} and let $\mathcal{X}_n^+(t)$ be as established in Definition \textcolor{blue}{2.2}. Let $A_t$ be the open ball of radius $\rho_t$ centering at $c_t$ constructed in Proposici\'on \ref{Alambda}. Let ${ {r}}>0$ verifying ${ {r}}>r_0(t)$ and $\epsilon>0$. It is satisfied that
$$\mathbb{P}(B_{\rho_t/2}(c_t)\subset C_{ {r}+\epsilon}(\mathcal{X}_n^+(t)),\mbox{ eventually})=1.$$
In addition, if $\overline{t}>t$ with $\overline{t}\in[l,u]$ verifying $\overline{t}-t\leq\min\{mc/2,\epsilon m/6\}$ then
$$\mathbb{P}(B_{\rho_t/4}(c_t)\subset C_{ {r}+2\epsilon}(\mathcal{X}_n^+(\overline{t})),\mbox{ eventually})=1.$$See Proposition \ref{enPruebaTeorema3Walther4} for details about constant $c$.

\end{proposition}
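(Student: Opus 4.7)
The plan is to combine the geometric ball $A_t=B_{\rho_t}(c_t)$ from Proposition~\ref{Alambda} (which satisfies $A_t\subset C_r(G(t))$) with the uniform Hausdorff consistency $\sup_{t\in[l,u]} d_H(\mathcal{X}_n^+(t),G(t))\to 0$ almost surely supplied by Corollary~\ref{xeneralizacionsoporte}. The guiding idea is that any ball of radius $r+\epsilon$ meeting the concentric half-ball $B_{\rho_t/2}(c_t)$ must meet $G(t)$, and an $\epsilon$-approximation of $G(t)$ by $\mathcal{X}_n^+(t)$ then forces it to meet the sample as well.

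For the first statement, I would first assume without loss of generality that $\epsilon\leq\rho_t/2$ (otherwise replace $\epsilon$ by $\overline{\epsilon}=\rho_t/2$ and use $C_{r+\overline{\epsilon}}(\cdot)\subset C_{r+\epsilon}(\cdot)$). Fix $x\in B_{\rho_t/2}(c_t)$ and an arbitrary ball $B_{r+\epsilon}(c)\ni x$; the goal is to show that $B_{r+\epsilon}(c)\cap\mathcal{X}_n^+(t)\neq\emptyset$ eventually, with probability one. If $\|x-c\|<r$, then $x\in B_r(c)\cap C_r(G(t))$, so $B_r(c)\cap G(t)\neq\emptyset$; choose $y$ in this intersection and, by Corollary~\ref{xeneralizacionsoporte}, find $z\in\mathcal{X}_n^+(t)$ with $\|z-y\|<\epsilon$, which yields $z\in B_{r+\epsilon}(c)$. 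If instead $r\leq\|x-c\|<r+\epsilon$, I would slide $x$ toward $c$ by a distance slightly smaller than $\epsilon$ to obtain an auxiliary point $y'\in A_t\cap B_r(c)$ (here the buffer $\epsilon\leq\rho_t/2$ is essential to keep $y'$ inside $A_t$), which reduces the argument to the previous case.

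For the second statement, I would first invoke Proposition~\ref{widetilde} with $\epsilon/3$ in place of its $\epsilon$ to obtain $B_{\rho_t/2}(c_t)\subset C_{r+\epsilon/3}(G(\overline{t}))$; the hypothesis $\overline{t}-t\leq\epsilon m/6$ is exactly the requirement $\overline{t}-t\leq(\epsilon/3)m/2$ needed by that proposition, together with $\overline{t}-t\leq mc/2$. Then I would repeat the case analysis of the first part with $r$ replaced by $r+\epsilon/3$, the role of $A_t$ played by $B_{\rho_t/2}(c_t)$, $\rho_t$ replaced by $\rho_t/2$, $G(t)$ replaced by $G(\overline{t})$, and the uniform Hausdorff consistency $\mathcal{X}_n^+(\overline{t})\to G(\overline{t})$ (again uniform in $\overline{t}\in[l,u]$ via Corollary~\ref{xeneralizacionsoporte}) applied with a buffer of size at most $5\epsilon/3$. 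This yields $B_{\rho_t/4}(c_t)\subset C_{r+\epsilon/3+5\epsilon/3}(\mathcal{X}_n^+(\overline{t}))=C_{r+2\epsilon}(\mathcal{X}_n^+(\overline{t}))$, subject to the analogous WLOG reduction of the buffer size relative to $\rho_t/4$.

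The main obstacle is Case~2 of the case analysis: when $x$ lies in the annulus $r\leq\|x-c\|<r+\epsilon$, one has to place an auxiliary point $y'$ inside $B_r(c)$ \emph{and} inside $A_t$, so that both the $r$-convex hull containment $A_t\subset C_r(G(t))$ and the Hausdorff approximation can be invoked. This is precisely where the radius buffers ($\epsilon\leq\rho_t/2$ in the first statement, $\epsilon\leq\rho_t/4$ in the second) enter, and it explains why the conclusion is stated for the concentric half- and quarter-balls rather than for all of $A_t$.
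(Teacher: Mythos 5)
Your proposal is correct, and it splits naturally in two. For the first inclusion you follow essentially the paper's own proof: the annulus/sliding step (move $x$ towards the centre $c$ by less than $\epsilon\leq\rho_t/2$ so that the auxiliary point stays in $A_t\subset C_{r}(G(t))$, forcing $B_{r}(c)\cap G(t)\neq\emptyset$) is the paper's Steps 1--2, and the passage from $G(t)$ to $\mathcal{X}_n^+(t)$ with one extra $\epsilon$ on the radius via the uniform Hausdorff consistency of Corollary \ref{xeneralizacionsoporte} is its Step 3. For the second inclusion you take a genuinely different route. The paper does not use Proposition \ref{widetilde} here: it first bounds $d_H(\mathcal{X}_n^+(t),\mathcal{X}_n^+(\overline{t}))\leq\epsilon$ (combining $G(t)\subset G(\overline{t})\oplus(\epsilon/3)B_1[0]$ from Proposition \ref{enPruebaTeorema3Walther4}(b) with two applications of Corollary \ref{xeneralizacionsoporte} at precision $\epsilon/3$) and then transfers the already established inclusion $B_{\rho_t/2}(c_t)\subset C_{r+\epsilon}(\mathcal{X}_n^+(t))$ to the level $\overline{t}$ by perturbing sample points by at most $\epsilon$, paying one more $\epsilon$ in the radius. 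You instead stay at the population level: Proposition \ref{widetilde} applied with $\epsilon/3$ (its hypothesis $\overline{t}-t\leq\min\{mc/2,(\epsilon/3)m/2\}$ is exactly the assumed bound, so no circularity and no extra condition) gives $B_{\rho_t/2}(c_t)\subset C_{r+\epsilon/3}(G(\overline{t}))$, and a single application of the Hausdorff consistency of $\mathcal{X}_n^+(\overline{t})$ with buffer $5\epsilon/3$ repeats the first-part geometry for the quarter ball; both budgets total $2\epsilon$, and in both proofs the ``eventually'' depends only on the uniform consistency event, hence not on the particular $\overline{t}$, which is what the proof of Theorem \ref{co} requires. Your version is arguably tidier, since it reuses Proposition \ref{widetilde} instead of re-deriving a comparison between the two samples. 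One caution on your closing remark: the reduction ``without loss of generality'' in the second part may only be applied to the sampling buffer (replace $5\epsilon/3$ by any positive $\delta\leq\min\{5\epsilon/3,\rho_t/4\}$ and conclude by monotonicity of $\gamma\mapsto C_\gamma$), not to $\epsilon$ itself, because the hypothesis $\overline{t}-t\leq\epsilon m/6$ feeding Proposition \ref{widetilde} is tied to the original $\epsilon$ and would not survive replacing $\epsilon$ by a smaller value; read in this way, your argument is complete.
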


\begin{proof}Remember that for all $z\in B_{\gamma_t/2}(c_t)\subset C_{ {r}}(G(t))$ it is verified that
\begin{equation}\label{jujuju}
	B_{ {r}}(\omega)\cap G(t)\neq\emptyset,\mbox{ }\forall\omega\in B_{ {r}}(z).
\end{equation}
It is not restrictive to assume that $ {r}+\epsilon< {r}+ \gamma_t/2$ or, equivalently, $\epsilon<\gamma_t/2$. The proof is established in three steps:\\$ $\\  
\underline{Step 1:} It is necessary to prove that if $z\in B_{\gamma_t/2}(c_t)$ and $\omega\in B_{\frac{\gamma_t}{2}+ {r}}(z)$ then $B_{ {r}}(\omega)\cap G(t)\neq\emptyset$.\\Then, let $z\in B_{\gamma_t/2}(c_t)$ and $\omega\in B_{\frac{\gamma_t}{2}+ {r}}(z)$ :
\begin{enumerate}
	\item If $\omega\in B_{ {r}}(z)$, according to equation (\ref{jujuju}), $B_{ {r}}(\omega)\cap G(t)\neq\emptyset$.
	\item If $\omega\notin B_{ {r}}(z)$ then $\omega\in B_{\frac{\gamma_t}{2}+ {r}}(z)\setminus B_{ {r}}(z)$. Let  $[w,z]$ be the segment of extremes $w$ and $z$ and $ \overline{z}\in [w,z]$ such that $\|\overline{z}- {z}\|=\frac{\gamma_t}{2}$ then $\|\omega- \overline{z}\|< {r}$. Therefore, $ \overline{z}\in B_{\gamma_t}(c_t)\subset C_{ {r}}(G(t))$ since $\| \overline{z}-c\|\leq\| \overline{z}-z\|+\|z-c\|<\frac{\gamma_t}{2}+\frac{\gamma_t}{2}= \gamma_t$. Since $ \overline{z}\in C_{ {r}}(G(t))$, according to equation (\ref{jujuju}),
	$$ B_{ {r}}( {w})\cap G(t)\neq \emptyset,\mbox{ }\forall  {w} \in B_{ {r}}( \overline{z}).\vspace{1.4mm}$$Since $\omega \in B_{ {r}}( \overline{z})$, $B_{ {r}}(w)\cap G(t)\neq \emptyset$.   		
\end{enumerate}$ $\\ \underline{Step 2:} We will prove that if $ {r}+\epsilon< {r}+\frac{\gamma_t}{2}$ and $z\in B_{\frac{\gamma_t}{2}}(c_t)$ then $B_{ {r}}(w)\cap G(t)\neq \emptyset$, $\forall \omega\in B_{ {r}+\epsilon}(z)$.\\If $ {r}+\epsilon< {r}+\frac{\gamma_t}{2}$ and $\omega\in B_{ {r}+\epsilon}(z)$ then $\omega\in B_{ {r}+\gamma_t/2}(z)$ and, according to Step 1, $B_{ {r}}(\omega)\cap G(t)\neq \emptyset$.\\$ $\\ 
\underline{Step 3:} It remains to check that if $ {r}+\epsilon< {r}+\gamma_t/2$ then, with probability one, there exists $n_0\in \mathbb{N}$ such that $ B_{\frac{\gamma_t}{2}}(c_t)\subset C_{ {r}+\epsilon}(\mathcal{X}_n^+(t))$, $\forall n\geq n_0$.\\Then, let $z\in B_{\frac{\gamma_t}{2}}(c_t)$. According to Step 2,  	
$$B_{ {r}}(\omega)\cap G(t)\neq \emptyset,\mbox{ }\forall \omega \in B_{ {r}+\epsilon}(z).\vspace{1.4mm}$$We will prove that, with probability one,
$$\exists n_0\in  \mathbb{N}\mbox{ such that } B_{ {r}+\epsilon}(\omega)\cap \mathcal{X}_n^+(t)\neq \emptyset, \mbox{ }\forall \omega \in B_{ {r}+\epsilon}(z)\mbox{ and }\forall n\geq n_0.\vspace{1.4mm} $$Let $\omega \in B_{ {r}+\epsilon}(z)$ and $s\in B_{ {r}}(\omega)\cap G(t)$. According to Corollary  \textcolor{blue}{2.1}, with probability one,  	
$$\exists n_0\in  \mathbb{N}\mbox{ that does not depend on } s \mbox{ and } z \mbox{ such that }  d_H(\mathcal{X}_n^+(t),G(t))<\epsilon, \mbox{ }\forall n\geq n_0.\vspace{1.4mm}$$Therefore,  	
$$\exists x_n\in \mathcal{X}_n^+(t) \mbox{ such that }\|x_n-s\|<\epsilon, \mbox{ }\forall n\geq n_0.\vspace{1.4mm}$$Then,
$$\|x_n -\omega\|\leq\|x_n-s\|+\|s-\omega\|< {r}+\epsilon= {r}+\epsilon.\vspace{1.4mm} $$Therefore, since $ B_{ {r}+\epsilon}(\omega)\cap\mathcal{X}_n^+(t)\neq \emptyset$ for all $ \omega\in B_{ {r}+\epsilon}(z)$ then  	
\begin{equation}\label{eq1}
	z\in C_{ {r}+\epsilon}(\mathcal{X}_n^+(t)), \mbox{ }\forall n\geq n_0.
\end{equation}
Next, we will prove that, for $0<\overline{t}-t\leq\min\{mc/2,\epsilon m/6\}$, it is verified that
$$\mathbb{P}( B_{\gamma_t/4}(c_t)\subset C_{ {r}+2\epsilon}(\mathcal{X}_n^+(\overline{t})) ,\mbox{ eventually})=1.$$Therefore, the proof of Proposition \ref{bolassss} will be finished. Then, let assume, without loss of generality, that $\epsilon\leq \gamma_t/4$. In another case, we could consider the proof for $\overline{\epsilon}<\gamma_t/4>0$. It would verify that $$B_{\gamma_t/4}(c_t)\subset C_{({r}+2\overline{\epsilon})}(\mathcal{X}_n^+(\overline{t}))\subset C_{({r}+2 \epsilon)}(\mathcal{X}_n^+(\overline{t})).$$
In addition, Proposition \ref{enPruebaTeorema3Walther4} (b) ensures that
$$
G(t)\subset G(\overline{t})\oplus\frac{2}{m}(\overline{t}-t)B_1[0].$$Since $\overline{t}-t\leq \epsilon m /6$,
$$
G(t)\subset G(\overline{t})\oplus\frac{2}{m}(\overline{t}-t)B_1[0]\subset  G(\overline{t})\oplus\frac{\epsilon}{3}B_1[0].$$
Corollary  \textcolor{blue}{2.1} allows us to prove that, with probability one and for $n$ large enough,
$$ d_H( G(t),\mathcal{X}_n^+(t) )\leq \frac{\epsilon}{3} \mbox{ and }d_H( G(\overline{t}),\mathcal{X}_n^+(\overline{t}) )\leq \frac{\epsilon}{3} .$$Using a triangular inequality, with probability one and for $n$ large enough,  	
\begin{equation}\label{ha}
	d_H( \mathcal{X}_n^+(t) ,\mathcal{X}_n^+(\overline{t}) )\leq \epsilon.
\end{equation}
Then, let $z\in B_{\gamma_t/4}(c_t)$ and let $B_{r+2\epsilon}(x)$ an arbitrary ball such that $z\in B_{r+2\epsilon}(x)$. It will be checked that, with probability one and for $n$ large enough, $B_{r+2\epsilon}(x)\cap\mathcal{X}_n^+(\overline{t})\neq \emptyset $. Two cases will be distinguished:\\$ $\\
\underline{Case 1:} $z\in B_{r+\epsilon}(x) $. Taking into account equation (\ref{eq1}), since $z\in B_{\gamma_t/4}(c_t)\subset C_{r+\epsilon}( \mathcal{X}_n^+(t))$, with probability one and for $n$ large enough,
\begin{equation}
	\exists x_n^+\in B_{r+\epsilon}(x)\cap  \mathcal{X}_n^+(t).
\end{equation}Taking (\ref{ha}) into account,
$$\exists \overline{x}_n^+\in \mathcal{X}_n^+(\overline{t}) \mbox{ such that } \|\overline{x}_n^+-x_n^+\|<\epsilon.$$Then, since
$$\|\overline{x}_n^+-x\|\leq\|\overline{x}_n^+-x_n^+\|+\|x_n^+-x\|<\epsilon+r+\epsilon$$it is verified
$$\overline{x}_n^+\in B_{r+2\epsilon}(x) \cap\mathcal{X}_n^+(\overline{t}).$$\\
\underline{Case 2:} $z\notin B_{r+\epsilon}(x) $. Then,
$$B_{\gamma_t/4}(c_t) \subset  B_{\gamma_t/2}(c_t)\subset C_{r+\epsilon}(\mathcal{X}_n^+(t)).$$Remember that $z\in B_{\gamma_t/4}(c_t)$ and $d(z,B_{r+\epsilon}(x))<\epsilon<\gamma_t/4$. Therefore,
$$\exists y\in B_{\gamma_t/4}(z)\cap C_{r+\epsilon}(\mathcal{X}_n^+(t))\cap B_{r+\epsilon} (x).$$Therefore, with probability one and for $n$ large enough,
$$\exists x_n^+\in \mathcal{X}_n^+(t)\cap B_{r+\epsilon} (x).$$
Taking (\ref{ha}) into account,
$$\exists \overline{x}_n^+\in \mathcal{X}_n^+(\overline{t}) \mbox{ such that } \|\overline{x}_n^+-x_n^+\|<\epsilon.$$It is easy to prove that $\overline{x}_n^+\in  B_{r+2\epsilon} (x)$.\qedhere
\end{proof}

\begin{lemma}\label{jijoju}	Let $G(t)$ be a compact, nonempty and nonconvex level set. Under assumptions (A), (D) and (K), let $\mathcal{X}_n$ be a random sample generated by the density function $f$ and let $\mathcal{X}_n^-(t)$ be as established in Definition \textcolor{blue}{2.2}. Let $A_t$ be the open ball of radius $\rho_t$ centering at $c_t$ constructed in Proposition \ref{Alambda}. Let $\overline{t}>t$ with $\overline{t}\in[l,u]$, $a>1$ and $a\in\mathbb{N}$ fixed. Then,
$$\mathbb{P}(\mathcal{X}_n^-(\overline{t})\cap B_{\rho_t/a}(c_t) \neq \emptyset,\mbox{ eventually})=1.$$\end{lemma}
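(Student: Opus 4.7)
The plan is to mimic the proof of Lemma B.3 (\ref{jijo}) but exploit the fact that here $\overline{t}>t$, which simplifies matters because $G(\overline{t})\subset G(t)$ automatically, so no constraint of the form $\overline{t}-t\leq mc/2$ is needed to keep $B_{\rho_t/a}(c_t)$ inside $G(\overline{t})^c$. The key is to (i) produce a positive separation between $B_{\rho_t/a}(c_t)$ and $G(\overline{t})$, (ii) convert this into a uniform upper bound strictly below $\overline{t}$ for $f$ on $B_{\rho_t/a}(c_t)$ via Proposition A.3 (a), and (iii) argue via Borel--Cantelli that a sample point falls in this ball and then use the uniform kernel rate from Proposition A.1 to place it in $\mathcal{X}_n^-(\overline{t})$.

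First, since $a\geq 2$, $B_{\rho_t/a}(c_t)\subset A_t$, and Proposition B.1 gives $d(z,G(t))\geq 3\rho_t$ for all $z\in A_t$. Because $\overline{t}>t$ implies $G(\overline{t})\subset G(t)$, we also have $d(z,G(\overline{t}))\geq 3\rho_t$ for every $z\in B_{\rho_t/a}(c_t)$. Set $S=\min\{3\rho_t,c\}>0$, where $c$ is the constant from Proposition A.3. Then $B_{\rho_t/a}(c_t)\subset G(\overline{t})^c\ominus SB_1[0]$, so by Proposition A.3 (a) there exists $b>0$ (depending on $t,\overline{t}$) with
\[
\sup_{x\in B_{\rho_t/a}(c_t)} f(x)\;<\;\overline{t}-b.
\]

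Next, I would show $\mathbb{P}(\mathcal{X}_n\cap B_{\rho_t/a}(c_t)\neq\emptyset,\text{ eventually})=1$. Proposition B.1 guarantees $f(x)>l-\zeta>0$ on $A_t\supset B_{\rho_t/a}(c_t)$, so
\[
\mathbb{P}(X_1\in B_{\rho_t/a}(c_t))\geq (l-\zeta)\mu(B_{\rho_t/a}(c_t))>0,
\]
and hence $\sum_n\mathbb{P}(\mathcal{X}_n\cap B_{\rho_t/a}(c_t)=\emptyset)\leq \sum_n e^{-n(l-\zeta)\mu(B_{\rho_t/a}(c_t))}<\infty$; Borel--Cantelli yields the claim.

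Finally, pick (a.s., for $n$ large) some $X_i\in\mathcal{X}_n\cap B_{\rho_t/a}(c_t)$. Since $B_{\rho_t/a}(c_t)\subset A_t\subset C$ by Proposition B.1, Proposition A.1 gives $|f_n(X_i)-f(X_i)|\leq \sup_C|f_n-f|\leq D_n$ eventually almost surely. Using $D_n\to 0$, take $n$ large enough that $D_n<b/2$; then
\[
f_n(X_i)\leq f(X_i)+D_n<\overline{t}-b+D_n<\overline{t}-\tfrac{b}{2}\leq \overline{t}-D_n,
\]
which is exactly the condition $X_i\in\mathcal{X}_n^-(\overline{t})$. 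There is no real obstacle here; the only subtlety is choosing $S=\min\{3\rho_t,c\}$ so that Proposition A.3 (a) applies uniformly regardless of the relative size of $\rho_t$ and $c$.
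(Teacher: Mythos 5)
Your proof is correct, but it takes a heavier route than the paper's. The paper's own argument for this lemma is essentially one line at the crucial step: since $A_t\cap G(t)=\emptyset$, every $x\in A_t$ satisfies $f(x)<t=\overline{t}-(\overline{t}-t)$, so the fixed gap $\overline{t}-t>0$ itself plays the role of your margin $b$; the paper then concludes with the same Borel--Cantelli step on $B_{\rho_t/a}(c_t)$ and the uniform rate $\sup_C|f_n-f|\leq D_n$, choosing $n$ so large that $D_n<(\overline{t}-t)/4$. You instead manufacture the margin through the separation $d(z,G(\overline{t}))\geq 3\rho_t$ and Proposition A.3~(a) applied to $G(\overline{t})^c\ominus S B_1[0]$ -- which is precisely the machinery the paper reserves for Lemma B.3, where it is genuinely needed because there $\overline{t}<t$ and the trivial bound $f<t$ gives nothing below $\overline{t}$. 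So your approach buys uniformity of flavour with Lemma B.3 (and would even survive without knowing $A_t\cap G(t)=\emptyset$ gave a level strictly below $\overline{t}$), at the cost of invoking A.3~(a) and its constant $c$ where a direct observation suffices; the paper's route is more elementary and, incidentally, makes transparent that no constraint on $\overline{t}-t$ is required, as you also point out.

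One small technical point in your step (i): with the erosion defined through the \emph{closed} ball $SB_1[0]$, the inclusion $B_{\rho_t/a}(c_t)\subset G(\overline{t})^c\ominus SB_1[0]$ requires the \emph{strict} inequality $d(z,G(\overline{t}))>S$, whereas Proposition B.1 only guarantees $d(z,G(t))\geq 3\rho_t$; if $c\geq 3\rho_t$ your choice $S=\min\{3\rho_t,c\}$ could in principle fail on points realizing the distance exactly. Take $S=\min\{2\rho_t,c\}$ (or any value strictly below $3\rho_t$), exactly as the paper does in Lemma B.3 where it erodes by $2\gamma_t$ while the distance bound is $4\gamma_t$; with that adjustment your argument is complete.
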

\begin{proof} We will establish the proof in three steps:

$ $\\
\underline{Step 1:} Given $a>1$, we need to check that $$\mathbb{P}(\mathcal{X}_n\cap  B_{\rho_t/a}(c_t)\neq\emptyset, \mbox{ eventually})=1.$$See proof of Lemma \ref{jijo}, it is totally analogous. Therefore, with probability one,
\begin{equation}\label{hgg}
	\exists n_0\in \mathbb{N}\mbox{ such that }\forall n\geq n_0 \mbox{ it is verified that }\mathcal{X}_n\cap B_{\rho_t/a}(c_t)\neq\emptyset.
\end{equation}

$ $\\
\underline{Step 2:} According to Step 1, if $x\in A_t$ then $f(x)<t$ since $A_t\cap G(t)=\emptyset$. Since $\overline{t}>t$ then
\begin{equation}\label{hjuytj}
	f(x)<t<\overline{t} -(\overline{t}-t)/2\mbox{ for all }x\in A_t.
\end{equation}
According to equation (\ref{hgg}),
$$\exists X_i\in\mathcal{X}_n\cap B_{\rho_t/a}(c_t),\mbox{ }\forall n\geq n_0, .$$
In addition, it is verified that $ B_{\rho_t/a}(c_t)\subset A_t\subset C$, where $C$ is a compact set established in Proposition \ref{enPruebaTeorema3Walther} that ensured that
$$\sup_{C}|f_n-f|=O\left(  \left(  \frac{\log{n}}{n}\right)^{p/(d+2p)}\right).$$Therefore, and for $n$ large enough,
$$
\exists N>0 \mbox{ such that }\sup_{C}|f_n-f|\leq N  \left(  \frac{\log{n}}{n}\right)^{p/(d+2p)} .
$$
If $D_n=N  \left(  \frac{\log{n}}{n}\right)^{p/(d+2p)}$ then $\lim_{n\rightarrow \infty}D_n=0$. So, for $(\overline{t}-t)/4>0$,
$$\exists n_1\in\mathbb{N}\mbox{ tal que } D_n<\frac{\overline{t}-t}{4}, \forall n\geq n_1.$$Since $X_i\in B_{\rho_t/a}(c_t)\subset C$, with probability one,
$$|f_n(X_i)-f(X_i)|\leq \sup_{C}|f_n-f|\leq D_n<\frac{\overline{t}-t}{4},\mbox{ }\forall n\geq\{n_0,n_1\}.$$Taking (\ref{hjuytj}) into account, for $n$ large,
$$f_n(X_i)\leq f(X_i)+D_n< f(X_i)+\frac{\overline{t}-t}{4}<\overline{t} -\frac{\overline{t}-t}{2}+\frac{\overline{t}-t}{4}=\overline{t}-\frac{\overline{t}-t}{4}<\overline{t}-D_n.$$
As consequence, for $n$ large enough,
\[\exists X_i\in\mathcal{X}_n\cap B_{\rho_t/a}(c_t)\mbox{ such that }  X_i\in\mathcal{X}_n^-(\overline{t}).\qedhere\]
\end{proof}

\end{document}